\author{Gad Naot}
\title{The Universal $sl_2$ Link Homology Theory}
\newcommand{\cut}[1]{}
\theoremstyle{plain}
\newtheorem{theorem}{Theorem}
\newtheorem{proposition}{Proposition}[section]
\newtheorem{lemma}[proposition]{Lemma}
\newtheorem{corollary}[proposition]{Corollary}
\theoremstyle{definition}
\newtheorem{definition}[proposition]{Definition}
\theoremstyle{remark}
\newtheorem{remark}[proposition]{Remark}
\newlength{\standardunitlength}
\long\def\@makecaption#1#2{%
    \vskip 10pt
    \setbox\@tempboxa\hbox{%\ifvoid\tinybox\else\box\tinybox\fi
      \small\sf{\bfcaptionfont #1. }\ignorespaces #2}%
    \ifdim \wd\@tempboxa >\captionwidth {%
        \rightskip=\@captionmargin\leftskip=\@captionmargin
        \unhbox\@tempboxa\par}%
      \else
        \hbox to\hsize{\hfil\box\@tempboxa\hfil}%
    \fi}
\font\bfcaptionfont=cmssbx10 scaled \magstephalf
\newdimen\@captionmargin\@captionmargin=2\parindent
\newdimen\captionwidth\captionwidth=\hsize
\newlength{\globalparindent}
\def\calF{{\mathcal F}}
\def\calO{{\mathcal O}}
\def\calC{{\mathcal C}}
\def\calA{{\mathcal A}}
\def\calH{{\mathcal H}}
\newcommand{\Cob}{{\mathcal Cob}}
\newcommand{\Cobl}{{\mathcal Cob}_{/l}}
\newcommand{\Kh}{{\text{\it Kh}}}
\newcommand{\Kom}{\operatorname{Kom}}
\newcommand{\Komh}{\operatorname{Kom}_{/h}}
\newcommand{\Mat}{\operatorname{Mat}}
\newcommand{\Mor}{\operatorname{Mor}}
\newcommand{\eps}[2]{{\hspace{-3pt}\begin{array}{c}%
  \raisebox{-2.5pt}{\includegraphics[width=#1]{figs/#2.eps}}%
\end{array}\hspace{-3pt}}}
\newcommand{\epsg}[2]{{\hspace{-3pt}\begin{array}{c}%
  \raisebox{0pt}{\includegraphics[width=#1]{figs/#2.eps}}%
\end{array}\hspace{-3pt}}}
\def\qed{{ \hfill $\eps{8mm}{sneegle}$ }}
\begin{document}

\begin{preliminary} % generates title page from the information above.
\maketitle

% There should be NOTHING between the title page and abstract.
\begin{abstract} % generates abstract page, line spacing adjusted to SGS guidelines.

We explore the complex associated to a link in the geometric formalism of Khovanov's ($n=2$) link homology theory, determine its exact underlying algebraic structure and find its precise universality properties for link homology functors. We present new methods of extracting all known link homology theories directly from this universal complex, and determine its relative strength as a link invariant by specifying the amount of information held within the complex.
\\

We achieve these goals by finding a complex isomorphism which reduces the complex into one in a simpler category. We introduce few tools and methods, including surface classification modulo the 4TU/S/T relations and genus generating operators, and use them to explore the relation between the geometric complex and its underlying algebraic structure. We identify the universal topological quantum field theory (TQFT) that can be used to create link homology and find that it is ``smaller'' than what was previously reported by Khovanov. We find new homology theories that hold a controlled amount of information relative to the known ones.
\\

The universal complex is computable efficiently using our reduction theorem. This allows us to explore the phenomenological aspects of link homology theory through the eyes of the universal complex in order to explain and unify various phenomena (such as torsion and thickness). The universal theory also enables us to state results regarding specific link homology theories derived from it. The methods developed in this thesis can be combined with other known techniques (such as link homology spectral sequences) or used in the various extensions of Khovanov link homology (such as $sl(3)$ link homology).

\end{abstract}

\begin{dedication}
To my parents Dvora and Dan-Zvi.
\end{dedication}

\newpage

\begin{acknowledgements}

There is much more to a PhD diploma than doing your research and writing your thesis. True, one needs to perform his research well and write a thesis (which you are about to read perhaps), but during the long and winding road that brought me to this day, I felt I did and achieved much more than that. I have explored many parts of mathematics, physics, philosophy and other disciplines, I have given talks, lectures and written papers, I have traveled the world and met many people, and  most of all I have evolved intellectually and mentally while living my life to the fullest.\\
\indent I wish to thank the University of Toronto and the department of mathematics (especially Ida Bulat the graduate coordinator) for their financial support and their efficient administration. My study experience here in Toronto was perfect. It is an amazing city, and a great university to study at.\\
\indent I wish to thank my supervisor Prof. Dror Bar-Natan. If it was not for him, I would not have been here in Toronto. I wish to thank him for allowing me to do things my way and for the relaxed and easy going atmosphere which allowed me to achieve many things I have wished for. I also thank him for his part in my financial support, and of course, last but not least, for all his involvement in my research. His help and support was a major driving force.

I thank Mikhail Khovanov and Charles Frohman for many useful comments on my research.
\\

I wish to thank all the friends I have made around the world and here in Toronto that made this time fun.
\\

I wish to thank my wonderful partner, Inmar Givoni, whom I love most fiercely. She is the source of my life.

\end{acknowledgements}

\tableofcontents % generate the Table of Contents (on a separate page).

\end{preliminary}

% ------------------------------------------------------------------------

\chapter{Introduction(s)}\label{chap:Intro}

Writing an introduction chapter to a PhD thesis is not easy. Doing so in mathematics is even harder. I have decided to write three introduction sections in this chapter. The first will address the general audience (non mathematicians) and mathematicians who know nothing about knot theory and wish to get a bird's eye overview (with no definitions!). I do so since I believe a mathematical text, especially a thesis, needs to be accessible to such an audience as well (at least in the ``big picture'') and thus written (in the beginning) using the simplest terms possible. The second introduction section will be the main one. It will address mathematicians who might know something about knot theory, probably heard about Khovanov homology, but know little or nothing about the construction of this homology theory (especially what is called the geometric construction). The third and last introduction section would address ``experts'' in the field and would prepare the grounds for reading this thesis. Experts are mathematicians who are familiar with the geometric construction of Khovanov homology or readers that survived the second introduction.

\section{Bird's eye overview for general audience}

Knot theory. The theory that studies knots. Yes, the things down your legs on your shoes. It is hard to imagine that knots have not been around since the age of early civilizations. Sophisticated uses of knots (such as encoding system) date from centuries ago in south America, and mystical uses of knots date even further back, thousands of years ago, in Egypt (not to mention using knots to actually tie something). Nonetheless, mankind only started looking \emph{mathematically} on such objects at the end of the 19th century. This is probably due to the lack of the proper mathematical frames, but it might be also the case that it was due to lack of interest in doing so.
\\

The first mathematical interest in knot theory came via explorations in physics. Gauss, for example, raised questions about linking of currents as part of his electro-magnetism theory. Lord Kelvin and others raised the idea that different atoms are different kind of knots created in the \emph{ether} (what used to be thought of the vacuum surrounding us), giving them required stability. Gold atom = $\eps{10mm}{10_105}$? Interactions between atoms or molecules are changes in the knotting structure. Thus was born the first (and still dominant) question in knot theory - the question of knot classification. How do we classify all different knots?
Is $\eps{10mm}{10_96}$ really different from $\eps{10mm}{10_76}$ ?
First knot tabulations appeared around the same time (end of 19th century) giving birth to what is known today in mathematics as \emph{knot theory}. Here are all the simple knots (in some sense) \footnote{See the knot atlas at www.katals.math.toronto.edu for everything you ever wanted to know about knots classification.}: \[\eps{10mm}{3_1}\eps{10mm}{4_1}\eps{10mm}{5_1}\eps{10mm}{5_2}\eps{10mm}{6_1} \eps{10mm}{6_2}\eps{10mm}{6_3}\eps{10mm}{7_1}\eps{10mm}{7_2}\eps{10mm}{7_3} \eps{10mm}{7_4}\eps{10mm}{7_5}\eps{10mm}{7_6}\eps{10mm}{7_7}\]
\\

At the beginning of the 20th century knot theory immediately found its place along with its big sisters in the low dimensional topology family. It was realized fast that the theory of one dimensional objects ($\eps{10mm}{8_19}$) cannot be separated from the three dimensional objects that the knot is embedded in (mysteriously also realized by Buddhists on a completely different level of thinking \footnote{See the cover of Dale Rolfsen's book "Knots and Links" for one of the eight glorious emblems of Tibetan Buddhism - $\eps{5mm}{7_4}$}). Constructions of certain surfaces associated to knots and its embedding spaces further tightened the connection between the low dimensional topology family. These relations produced the main advances, in the early-mid 20th century, to what is now known as \emph{classical} knot theory. During that period of time many \emph{knot invariants} appeared, coming mainly from various topological constructions, using the developing theory of algebraic-topology. These algebraic knot invariants are objects that are associated to knots and stay the same (i.e invariant) as long as the knot does not change its type (i.e no cutting!). The main use for such invariants, at that time, was in the old question of classification, but as usual in math, they evolved to a topic of their own. \\

Although the ether theory was rejected due to the rise of special relativity theory, physics still plays a major role in the development of knot theory (and low dimensional topology). It is interesting to note how the physical notions change through the decades, but an elegant mathematical theory, like knot theory, is just too hard to resist. Knotted ether became knotted vortex of force field, or knotted vortex of turbulence or even knotted space-time wormholes! The big 80's string theory revolution brought with it a new and exciting era to knot theory. As tools such as conformal field theory and topological quantum field theory evolved, new insight on knot theory was brought to light from these directions. Thus \emph{quantum knot invariants theory} was born. Using ``physics tools'' like path integration and re-normalization, using analysis and combinatorics tools like perturbation theory, and using some heavy algebraic tools like Lie algebras and quantum groups, many new invariants were born and a new theory evolved around them. Although some of these algebraic invariants lack the ``topological interpretations'' (i.e. something you can actually see) all proved insightful for knot theory, and the relations between knots and three or four dimensional objects. The end of the 20th century was the age of finite type invariants and polynomial invariants, Chern-Simons theory and the Kontsevich integral, quantum algebras and configuration spaces, associators and R-matrices \footnote{See the preface for Tomotada Ohtsuki's book "Quantum Invariants" and references therein for a historical overview (and the rest of the book if you are interested in the math).}- ideas flowing in and out of quantum field theory and statistical physics. Knot theory has also enlarged its deep connections with many other closely related mathematical fields, along with some less obvious mathematical neighbors, like number theory, for example.
\\

The first spark of the quantum revolution was probably the Jones polynomial. By now there are a few definitions for this polynomial, some simple and completely combinatorial, some purely algebraic and some use heavy ``physics tools''. No matter which definition you choose, the result is a certain polynomial attached to every knot or link. This polynomial does not have a pure topological interpretation although it is a topological invariant of the knot. The Jones polynomial spawned many generalizations and extensions with similar properties, all based on the various algebraic/combinatorial/physical constructions used in its definitions. This \emph{quantum} invariant still symbolizes the simplest version of knot polynomial invariants, and serves as a basis for any theory that uses the above tools. One of its contributions to the field was demonstrating the ability of using simple combinatorial constructions on \emph{knot diagrams} (two dimensional projections of knots) to define invariants. These tools are usually encoded in what is called \emph{skein relations} -- algebraic relations between various changes of the knot diagram. Based on these combinatorial relations it is easier to construct more knot invariants.
\\

Things slowed down towards the turn of the century, but the beginning of the 21st century promised new horizons for knot theory, and knot invariants theory. A few major developments contributed to the new knot theory ``revolution''.  The first is the introduction of the \emph{categorification} notion in algebra. We will discuss it in mathematical terms in the second introduction section, but for now, the idea is as follows: take an algebraic object and view it as a certain part of a higher category of objects that encode much more information and structure. For example, take a polynomial and view it as an Euler characteristic of some homology theory in some category (I admit, a pretty vague description!). During the first years of the 00's Khovanov took this notion and categorified the Jones polynomial. This theory is known as \emph{Khovanov link homology theory}. Soon after, using a second major development in physics from the 90's (the introduction of matrix factorization tools for D-branes), categorification of other knot polynomials was introduced. A third development took place in three-four dimensional topology with the introduction of Hegaard-Floer homology which contributed the knot Floer homology (which is a categorification of the Alexander knot polynomial). These developments opened the door to other categorification schemes, one of which is the \emph{geometric formalism} due to Bar-Natan. It also opened the door to large amount of mathematical work experimenting computationally and theoretically these theories, trying to uncover its properties. \textbf{This thesis is about the geometric formalism of Khovanov's link homology theory}.

\vspace{5mm}
There are many questions surrounding these new theories, and work done on the subject in the last 5 years is still in progress. There are questions regarding the deep relations between these theories and some aspects of topological string theory (physics again!). There are questions regarding the connection between the various categorifications, and ways to unite them all under one formalism. There are structural questions and computational questions, theoretical questions and applied questions, big questions and small questions - a new field was born. This thesis is focusing on Khovanov's homology theory, and using the geometric formalism I try to answer some of these questions. In loose terms, the questions that this thesis focuses on are questions of information held within the theory (the exact underlying structure of the theory), questions of \emph{universality} (the largest amount of information extractable) and questions of information extraction (how to compute it?). If you are some sort of a mathematician (or a physicist), please proceed to the second introduction section (experts jump to the third introduction section). If you are not, I hope you enjoyed the good story and maybe learned some knot history!
\\

\section{Main Introduction}

We now turn to the main part of the introduction chapter. I assume the reader has some strong mathematical background, perhaps knows knot theory, maybe even heard about Khovanov homology, but is definitely not familiar with the geometric construction of it. I will introduce the main definitions and constructions which form the starting point of this thesis. If you already know the original construction by Khovanov (from \cite{kho2} or \cite{ba5} for example), or other common combinatorial constructions \cite{viro1}, I still recommend reading this part of the introduction chapter, as this thesis uses the geometric formalism only.
\\

The geometric formalism for constructing Khovanov's link homology is due to Bar-Natan and described in full in \cite{ba1}. I follow \emph{closely} the definitions and constructions described in that paper (borrowing many nice figures with permission). Once the reader is familiar with the geometric formalism he/she may consider themselves as experts in the field and continue to the expert introduction.
\\

\subsection{The cube of resolutions}\label{susection:cubefigure}
We start with a \emph{knot}. An embedding of $S^1$ into a 3 dimensional space, say $\mathbb{R}^3$, considered up to isotopies. A non intersecting union of few knots is called a link. In this chapter we will use the word link to denote links or knots. Given a link, one projects it on a two dimensional plane (in a generic way, i.e. no singularities except transversal self crossings) to get \emph{a link diagram}. The diagram also keeps track, at each crossing, which strand is over and which is under (i.e. we remember the information of the 3 dimensional embedding). We enumerate the crossings. Here is an example of a Trefoil knot diagram :
\[ \eps{50mm}{Trefoil} \]

After giving the link an orientation we distinguish between positive and negative crossings according to the convention:

\[ \eps{40mm}{Crossings}\]

A link may have many diagrams, of course, but due to a theorem by Reidemeister, diagrams belong to of a single link type if and only if they are connected by a series of diagram moves, called the Reidemeister moves:

\[ \eps{150mm}{Reidemeister} \]

We now describe how to construct a certain chain complex given a link diagram. This chain complex is known as \emph{the cube of resolutions}. Given a link diagram, denote by $n$ the total number of crossings. $n_+$ and $n_-$ will denote the number of positive and negative crossings respectively. The cube of resolutions is indeed an $n$ dimensional cube with $2^n$ vertices. Let us describe the vertices and edges of that cube.
\\

\textbf{Vertices.} Each vertex of the cube carries a \emph{smoothing} of the link diagram. A smoothing is a planar diagram obtained by replacing every crossing in the diagram with either a ``$0$-smoothing'' or with a ``$1$-smoothing'' according to the following rules : a crossing involves two strands. The $0$-smoothing is when you enter on the lower strand (level $0$) and turn right at the crossing. The $1$-smoothing is when you enter on the upper strand (level $1$) and turn right at the crossing. The following diagram demonstrates this notion using arrows (not to be confused with the diagram overall orientation) \footnote{One can also put her hand on the upper strand and rotate it counterclockwise to pick the regions that are connected in the $0$-smoothing.}.

\[ \eps{150mm}{Smoothings} \]

\noindent Each smoothing (that is each vertex) can be coded via a sequence of $n$ numbers taking $0$ or $1$ values, according to the type of smoothing each crossing was resolved to. An example of a vertex for our Trefoil would look like:

\[ \eps{40mm}{100} \]

\noindent The total cube is skewered along its main diagonal. More precisely, each vertex of the cube has a ``height'', the sum of its coordinates, a number between $0$
and $n$. The cube is displayed in such a way so that vertices of height
$k$ project down to the point $k-n_-$ on a line marked below the cube. With such notation, each edge of the cube is marked in the natural manner by n-letter strings of $0$'s and $1$'s with exactly one $\star$ (the $\star$ denotes the coordinate which
changes from $0$ to $1$ along a given edge). The trefoil cube will look like:

\[\eps{100mm}{BareCube}\]

\vspace{3mm}
\textbf{Edges.} Each edge of the cube carries a \emph{cobordism} between the smoothing on the tail of that edge and the smoothing on its head. A cobordism is an oriented two dimensional surfaces embedded in $\mathbb{R}^2\times[0,1]$ whose boundary lies entirely in $\mathbb{R}^2\times\{0,1\}$ and whose ``top'' boundary is the ``tail'' smoothing and whose ``bottom'' boundary is the ``head'' smoothing. Specifically, to get the cobordism for an edge $(\xi_i)\in\{0,1,\star\}^n$ for which $\xi_j=\star$ we remove a disk
neighborhood of the crossing $j$ from the smoothing $\xi(0):=\xi|_{\star\to 0}$ of our link diagram, cross with $[0,1]$, and fill the empty cylindrical slot around the missing crossing with a saddle cobordism: $\eps{10mm}{Saddle}$. Here is an example of a cobordism on the edge $\star00$ in the Trefoil example :

\[ \eps{70mm}{S00}\]

\vspace{2mm}

\noindent We use the following symbolic notations for cobordism: the diagram-piece
$\HSaddleSymbol$ stands for the saddle cobordism with top $\smoothing$ and
bottom $\hsmoothing$.

\textbf{Signs.} Some of the edges (cobordisms) carry signs with them.
If an edge $\xi$ is labeled by a sequence $(\xi_i)$ in the alphabet
$\{0,1,\star\}$ and if $\xi_j=\star$, then the sign on the edge $\xi$
is $(-1)^\xi:=(-1)^{\sum_{i<j}\xi_i}$. The basis to the exterior algebra in $n$ generators can be easily used to determine these signs, as the following picture demonstrates for the Trefoil example :

\[ \eps{80mm}{SignsCube}\]

\vspace{2mm}

Put all together, the cube of resolutions for the Trefoil knot will look like:

\[
\begin{picture}(0,0)%
\includegraphics{figs/Main.pstex}%
\end{picture}%
%
%  pstex_opts: -m 0.78 
%
\setlength{\unitlength}{3079sp}%
\begingroup\makeatletter\ifx\SetFigFont\undefined%
\gdef\SetFigFont#1#2#3#4#5{%
  \reset@font\fontsize{#1}{#2pt}%
  \fontfamily{#3}\fontseries{#4}\fontshape{#5}%
  \selectfont}%
\fi\endgroup%
\begin{picture}(9654,4612)(210,-4268)
\put(9076,-4261){\makebox(0,0)[b]{\smash{\SetFigFont{12}{14.4}{\rmdefault}{\mddefault}{\itdefault}{\color[rgb]{0,0,0}$0$}%
}}}
\put(6376,-4261){\makebox(0,0)[b]{\smash{\SetFigFont{12}{14.4}{\rmdefault}{\mddefault}{\itdefault}{\color[rgb]{0,0,0}$-1$}%
}}}
\put(3676,-4261){\makebox(0,0)[b]{\smash{\SetFigFont{12}{14.4}{\rmdefault}{\mddefault}{\itdefault}{\color[rgb]{0,0,0}$-2$}%
}}}
\put(976,-4261){\makebox(0,0)[b]{\smash{\SetFigFont{12}{14.4}{\rmdefault}{\mddefault}{\itdefault}{\color[rgb]{0,0,0}$-3$}%
}}}
\put(976,-586){\makebox(0,0)[b]{\smash{\SetFigFont{8}{9.6}{\rmdefault}{\mddefault}{\updefault}{\color[rgb]{0,0,0}$(n_+,n_-)=(0,3)$}%
}}}
\end{picture}

\]

%\begin{sidewaysfigure}
%$\eps{180mm}{main2}$
%\end{sidewaysfigure}

\subsection{Categorical frame for the cube of resolutions and the chain complex associated to a link}

We now define the category in which the cube of resolution is an object of. This would turn the cube of resolution into a chain complex, and prepare the ground for defining a homology theory associated to each link. We build this category step by step.

\begin{definition}
$\Cob$ is the category whose objects are smoothings (i.e.,
simple curves in the plane) and whose morphisms are cobordisms between such
smoothings. The cobordisms regarded up to boundary-preserving isotopies.
The composition of morphisms is given by placing one cobordism atop the other. The smoothing on the right (in the usual composition notation) goes on top. For example: $\eps{50mm}{CobComposition}$.
\end{definition}

An \emph{additive category} is a category in which the sets of morphisms
(between any two given objects) are Abelian groups and the composition
maps are bilinear. Let $\calC$ be some arbitrary
category. If it is not additive to start with we make it additive by extending every set of morphisms $\Mor(\calO,\calO')$ to also allow formal $\mathbb{Z}$-linear combinations of morphisms and by extending the composition maps in the
natural bilinear manner.
\\

\begin{definition} Given an additive category $\calC$ as above,
the additive category $\Mat(\calC)$ is defined as follows:
\begin{itemize}
\item The objects of $\Mat(\calC)$ are formal direct sums (possibly empty)
  $\oplus_{i=1}^n\calO_i$ of objects $\calO_i$ of $\calC$.
\item If $\calO=\oplus_{i=1}^m\calO_i$ and $\calO'=\oplus_{j=1}^n\calO'_j$,
  then a morphism $F:\calO'\to\calO$ in $\Mat(\calC)$ will be an $m\times
  n$ matrix $F=(F_{ij})$ of morphisms $F_{ij}:\calO'_j\to\calO_i$ in
  $\calC$.
\item Morphisms in $\Mat(\calC)$ are added using matrix addition.
\item Compositions of morphisms in $\Mat(\calC)$ are defined by a rule
  modeled on matrix multiplication, but with compositions in $\calC$
  replacing the multiplication of scalars,
  \[
    \left((F_{ij})\circ(G_{jk})\right)_{ik} := \sum_jF_{ij}\circ G_{jk}.
  \]
\end{itemize}
\end{definition}

It is often convenient to represent objects of $\Mat(\calC)$ by column
vectors and morphisms by bundles of arrows pointing from
one column to another, viewed as ``matrices'' of morphisms.

\[\begin{picture}(0,0)%
\includegraphics{figs/Mat.pstex}%
\end{picture}%
\setlength{\unitlength}{3947sp}%
\begingroup\makeatletter\ifx\SetFigFont\undefined%
\gdef\SetFigFont#1#2#3#4#5{%
  \reset@font\fontsize{#1}{#2pt}%
  \fontfamily{#3}\fontseries{#4}\fontshape{#5}%
  \selectfont}%
\fi\endgroup%
\begin{picture}(5268,1762)(967,-1636)
\put(1951,-376){\makebox(0,0)[lb]{\smash{\SetFigFont{10}{12.0}{\rmdefault}{\mddefault}{\updefault}{\color[rgb]{0,0,0}$G_{21}$}%
}}}
\put(1876,-571){\makebox(0,0)[lb]{\smash{\SetFigFont{10}{12.0}{\rmdefault}{\mddefault}{\updefault}{\color[rgb]{0,0,0}$G_{31}$}%
}}}
\put(1689,-181){\makebox(0,0)[lb]{\smash{\SetFigFont{10}{12.0}{\rmdefault}{\mddefault}{\updefault}{\color[rgb]{0,0,0}$G_{11}$}%
}}}
\put(4351,-1336){\makebox(0,0)[lb]{\smash{\SetFigFont{10}{12.0}{\rmdefault}{\mddefault}{\updefault}{\color[rgb]{0,0,0}$F_{23}$}%
}}}
\put(3601,-736){\makebox(0,0)[b]{\smash{\SetFigFont{12}{14.4}{\rmdefault}{\mddefault}{\updefault}{\color[rgb]{0,0,0}$\calO'_2$}%
}}}
\put(3601,-1336){\makebox(0,0)[b]{\smash{\SetFigFont{12}{14.4}{\rmdefault}{\mddefault}{\updefault}{\color[rgb]{0,0,0}$\calO'_2$}%
}}}
\put(3601,-136){\makebox(0,0)[b]{\smash{\SetFigFont{12}{14.4}{\rmdefault}{\mddefault}{\updefault}{\color[rgb]{0,0,0}$\calO'_1$}%
}}}
\put(1201,-436){\makebox(0,0)[b]{\smash{\SetFigFont{12}{14.4}{\rmdefault}{\mddefault}{\updefault}{\color[rgb]{0,0,0}$\calO''_1$}%
}}}
\put(1201,-1036){\makebox(0,0)[b]{\smash{\SetFigFont{12}{14.4}{\rmdefault}{\mddefault}{\updefault}{\color[rgb]{0,0,0}$\calO''_2$}%
}}}
\put(6001,-436){\makebox(0,0)[b]{\smash{\SetFigFont{12}{14.4}{\rmdefault}{\mddefault}{\updefault}{\color[rgb]{0,0,0}$\calO_1$}%
}}}
\put(6001,-1036){\makebox(0,0)[b]{\smash{\SetFigFont{12}{14.4}{\rmdefault}{\mddefault}{\updefault}{\color[rgb]{0,0,0}$\calO_2$}%
}}}
\put(2401,-1636){\makebox(0,0)[b]{\smash{\SetFigFont{12}{14.4}{\rmdefault}{\mddefault}{\updefault}{\color[rgb]{0,0,0}$G$}%
}}}
\put(4801,-1636){\makebox(0,0)[b]{\smash{\SetFigFont{12}{14.4}{\rmdefault}{\mddefault}{\updefault}{\color[rgb]{0,0,0}$F$}%
}}}
\put(4351,-286){\makebox(0,0)[lb]{\smash{\SetFigFont{10}{12.0}{\rmdefault}{\mddefault}{\updefault}{\color[rgb]{0,0,0}$F_{21}$}%
}}}
\put(4351,-736){\makebox(0,0)[lb]{\smash{\SetFigFont{10}{12.0}{\rmdefault}{\mddefault}{\updefault}{\color[rgb]{0,0,0}$F_{22}$}%
}}}
\end{picture}
\]

Denote the cube of resolutions associated to a link diagram $K$ by $\|K\|$. The cube of resolution for a link diagram can be interpreted as a chain of morphisms $\| K\|=\left(\xymatrix{\|
K\|^{-n_-}\ar[r]&\| K\|^{-n_-+1}\ar[r]&\dots\ar[r]&\|
K\|^{n_+}}\right)$.

\begin{definition} Given an additive category $\calC$, let
$\Kom(\calC)$ be the category of complexes over $\calC$, whose objects
are chains of finite length $\xymatrix{ \dots \ar[r] & \Omega^{r-1}
\ar[r]^{d^{r-1}} & \Omega^r \ar[r]^{d^r} & \Omega^{r+1} \ar[r] & \dots
}$ for which the composition $d^r\circ d^{r-1}$ is $0$ for all $r$, and
whose morphisms $F:(\Omega_a^r,\,d_a)\to(\Omega_b^r,\,d_b)$ are
commutative diagrams:
\\
\[\xymatrix{
  \dots \ar[r] &
  \Omega_a^{r-1} \ar[r]^{d_a^{r-1}} \ar[d]_{F^{r-1}} &
  \Omega_a^r \ar[r]^{d_a^r} \ar[d]_{F^r} &
  \Omega_a^{r+1} \ar[r] \ar[d]_{F^{r+1}} &
  \dots  \\
  \dots \ar[r] &
  \Omega_b^{r-1} \ar[r]^{d_b^{r-1}} &
  \Omega_b^r \ar[r]^{d_b^r} &
  \Omega_b^{r+1} \ar[r] &
  \dots
}\]
\\
in which all arrows are morphisms in $\calC$. Like in ordinary homological algebra, the
composition $F\circ G$ in $\Kom(\calC)$ is defined via $(F\circ G)^r:=F^r\circ G^r$.
\end{definition}

\begin{proposition} \cite{ba1} For any link diagram $K$ the cube of resolution (i.e. the chain complex $\| K\|$) is a complex in $\Kom(\Mat(\Cob))$. I.e., $d^r\circ d^{r-1}$ is always $0$ for these chains.
\end{proposition}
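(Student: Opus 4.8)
The plan is to reduce the identity $d^r\circ d^{r-1}=0$ to a computation on the two-dimensional faces of the cube. Recall that $d^r$ is the matrix of morphisms in $\Mat(\Cob)$ whose $(\eta,\xi)$-entry is the signed saddle cobordism attached to the directed edge from $\xi$ to $\eta$ when such an edge exists, and is $0$ otherwise. Hence the $(\eta,\xi)$-entry of $d^{r}\circ d^{r-1}$ is the sum, over all vertices $\zeta$ that lie on a directed edge-path of length two from $\xi$ to $\eta$, of the composition of the two signed saddles along that path. For such a term to be nonzero, $\eta$ must agree with $\xi$ except at two coordinates $a<b$ where $\xi$ has $0$ and $\eta$ has $1$; there are then exactly two intermediate vertices $\zeta$, according to whether coordinate $a$ or coordinate $b$ is switched first. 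So each matrix entry of $d^r\circ d^{r-1}$ is a sum of exactly two terms, one for each of the two directed paths around the square face spanned by coordinates $a$ and $b$, and it suffices to show these two terms cancel for every such face.

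First I would show that, for a fixed square face, the two path-compositions of saddle cobordisms are \emph{equal} as morphisms in $\Cob$. The saddle attached to an $a$-edge is supported in a small disk neighbourhood $D_a\times[0,1]$ of crossing $a$, and the saddle attached to a $b$-edge in a disjoint disk neighbourhood $D_b\times[0,1]$ of crossing $b$ (distinct crossings are distinct points of the plane, so disjoint neighbourhoods can be fixed once and for all); outside $D_a\times[0,1]\cup D_b\times[0,1]$ both composites are the product cobordism. Thus one composite has its $a$-saddle in $D_a\times[0,\tfrac12]$ and its $b$-saddle in $D_b\times[\tfrac12,1]$, while the other has them interchanged, and a boundary-preserving isotopy sliding the two saddles past each other along the $[0,1]$ direction (legitimate because $D_a\times[0,1]$ and $D_b\times[0,1]$ are disjoint) carries one to the other. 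So the two terms carry the same underlying cobordism.

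Next I would check that the two terms carry \emph{opposite} signs. Write $s_a$ for the sum of the fixed coordinates of $\xi$ at positions $<a$, and $s_b$ for the sum of its fixed coordinates at positions $<b$ other than $a$. On the path ``$a$ first'': the $a$-edge leaves the vertex with $0$ at both $a$ and $b$, contributing $(-1)^{s_a}$; the $b$-edge then leaves the vertex whose $a$-coordinate has become $1$, so its predecessor-sum below $b$ is $s_b+1$, contributing $(-1)^{s_b+1}$; total $(-1)^{s_a+s_b+1}$. On the path ``$b$ first'': the $b$-edge leaves the vertex with $0$ at $a$, contributing $(-1)^{s_b}$; the $a$-edge then leaves the vertex whose $b$-coordinate has become $1$, but $b>a$ so this leaves the predecessor-sum below $a$ equal to $s_a$, contributing $(-1)^{s_a}$; total $(-1)^{s_a+s_b}$. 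The two signs differ by a factor $-1$, so the face contributes $\bigl((-1)^{s_a+s_b+1}+(-1)^{s_a+s_b}\bigr)$ times the common cobordism, which is $0$. Summing over all faces yields $d^r\circ d^{r-1}=0$.

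The main obstacle is the middle step: making precise that saddles at disjoint crossings commute up to boundary-preserving isotopy in $\Cob$. This is geometrically evident but deserves a careful statement; the rest is just the bookkeeping of the exterior-algebra sign convention, which was designed precisely to produce the cancellation above.
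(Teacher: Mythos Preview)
Your argument is correct and is precisely the standard proof: each matrix entry of $d^r\circ d^{r-1}$ is a sum over the two directed paths around a $2$-face, the two composite cobordisms are isotopic because the saddles live in disjoint cylinders $D_a\times[0,1]$ and $D_b\times[0,1]$, and the exterior-algebra sign convention makes the signs opposite. Note, however, that the thesis does not actually prove this proposition --- it is stated with a citation to \cite{ba1} and no proof is given here --- so there is nothing in the paper to compare against beyond observing that your argument is the one Bar-Natan gives in the cited reference.
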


\begin{definition}
Let $\calC$ be a category. Just like in ordinary homological algebra, we
say that two morphisms $F,G:(\Omega_a^r)\to(\Omega_b^r)$ in $\Kom(\calC)$
are \emph{homotopic} (and we write $F\sim G$) if there exists ``backwards
diagonal'' morphisms $h^r:\Omega_a^r\to \Omega_b^{r-1}$ so that
$F^r-G^r=h^{r+1}d^r+d^{r-1}h^r$ for all $r$.

\[\xymatrix@C=2cm{
  \Omega_a^{r-1} \ar[r]^{d_a^{r-1}}
    \ar@<-2pt>[d]_{F^{r-1}} \ar@<2pt>[d]^{G^{r-1}} &
  \Omega_a^r \ar[r]^{d_a^r} \ar[ld]_{h^r}
    \ar@<-2pt>[d]_{F^r} \ar@<2pt>[d]^{G^r} &
  \Omega_a^{r+1} \ar[ld]_{h^{r+1}}
    \ar@<-2pt>[d]_{F^{r+1}} \ar@<2pt>[d]^{G^{r+1}} \\
  \Omega_b^{r-1} \ar[r]^{d_b^{r-1}} &
  \Omega_b^r \ar[r]^{d_b^r} &
  \Omega_b^{r+1}
}\]
\end{definition}

\begin{definition} $\Komh(\calC)$ is $\Kom(\calC)$ modulo homotopies. That
is, $\Komh(\calC)$ has the same objects as $\Kom(\calC)$ (formal
complexes), but homotopic morphisms in $\Kom(\calC)$ are declared to be the
same in $\Komh(\calC)$.
\end{definition}

\begin{definition}
Let $\Cobl$ denote the category $\Cob$ with the morphisms mod out by the following local relations :

\[ The~4TU~relation:
\begin{array}{c}
\includegraphics[height=1cm]{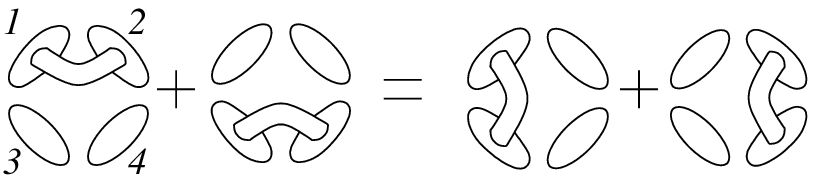}
\end{array}
\]
\[
The~S~relation: \eps{7mm}{S}=0
\]
\[
The~T~relation: \eps{7mm}{T}=2.
\]

The $S$ relation says that whenever a cobordism contains a sphere it is replaced by zero. The $T$ relation says that whenever a cobordism contains a torus, the torus
may be dropped and replaced by $2$. To understand the $4TU$ relation, start from some given cobordism and assume its intersection with a certain ball is the union of four disks $D_1$ through $D_4$ (these disks may well be on different connected
components of $C$). Let $C_{ij}$ denote the result of removing $D_i$
and $D_j$ from $C$ and replacing them by a tube that has the same
boundary. The relation asserts that $C_{12}+C_{34}=C_{13}+C_{24}$.
\end{definition}

Note that under these relations it does not matter whether we work with embedded surfaces or abstract surfaces in our cobordism category (see \cite{ba1} and chapter \ref{chap:comments} of this thesis).

\begin{theorem} \cite{ba1}
The isomorphism class of the cube of resolutions (i.e. the complex $\| K\|$) regarded in $\Komh(\Mat(\Cobl))$ is an invariant of the link. That
is, it does not depend on the ordering of the layers of a cube as column
vectors and on the ordering of the crossings and it is invariant under
the three Reidemeister moves.
\end{theorem}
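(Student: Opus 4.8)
The plan is to establish the three claims in the order they are stated. Independence of the ordering of the layers of a cube as a column vector is purely formal: a different ordering permutes the rows and columns of the matrices representing the differentials $d^r$, and this permutation is an isomorphism in $\Mat(\Cobl)$, hence gives an isomorphism of complexes already in $\Kom(\Mat(\Cobl))$, a fortiori in $\Komh(\Mat(\Cobl))$. Independence of the ordering of the crossings is almost as formal: relabelling the crossings simultaneously relabels all the $0/1/\star$-strings and identifies the two cubes smoothing-by-smoothing and cobordism-by-cobordism; the edge signs $(-1)^\xi$ do change, but they change exactly according to the change-of-basis rule for the exterior algebra $\Lambda(\mathbb{Z}^n)$ that was used to pin them down, so the resulting sign discrepancies are absorbed by an isomorphism of complexes. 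Invariance under planar isotopy of the diagram needs nothing at all, since the objects of $\Cob$ are smoothings up to planar isotopy and its morphisms are cobordisms up to boundary-preserving isotopy.

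The content of the theorem is Reidemeister invariance, which I would prove one move at a time by producing, for each move, an explicit homotopy equivalence in $\Komh(\Mat(\Cobl))$ between the complexes of the ``before'' and ``after'' diagrams (which agree outside a small disk, so their cubes differ only in the tensor factor coming from the crossing(s) inside that disk). The engine of all three arguments is a Gaussian elimination lemma for complexes over an additive category: if, writing a differential $d^r$ as a matrix of morphisms, one of its entries $\phi\colon C\to E$ --- with $C$ a summand of $\Omega^r$ and $E$ a summand of $\Omega^{r+1}$ --- is an isomorphism in $\Mat(\Cobl)$, then the whole complex is homotopy equivalent to the one obtained by deleting $C$ and $E$ and replacing the differential from the complement of $C$ in $\Omega^r$ to the complement of $E$ in $\Omega^{r+1}$ by $\epsilon-\mu\phi^{-1}\lambda$, where $\lambda$ is the part of $d^r$ into $E$ from that complement, $\mu$ is the part out of $C$ to that complement, and $\epsilon$ is the original complementary block. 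Iterating this local move collapses the small cubes to the expected answers.

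For R1 (a positive or negative kink) the local complex has two vertices, one of whose smoothings carries a disjoint small circle; I would relate the two vertices to the single-strand object by an explicit cap and cup cobordism and check the two homotopy identities --- one of them produces a closed sphere, killed by the $S$ relation, together with a closed torus, removed by the $T$ relation, so that these two relations are precisely what makes the homotopy equivalence to the one-term complex of the single strand go through (the overall height and grading work out once the $n_\pm$ normalization is accounted for). For R2 the local complex is a square on four vertices, two of whose smoothings are isotopic to the crossingless $2$-strand smoothing and between which the relevant edge cobordism is, modulo the $4TU$ relation, invertible; applying the Gaussian elimination lemma twice --- once to cancel the acyclic pair created by that isomorphism, once more to cancel a further contractible pair --- reduces the four-term complex to the one-term complex of the crossingless $2$-strand tangle, the $4TU$ relation being what guarantees that the composites $\mu\phi^{-1}\lambda$ appearing in the process are the identity (or zero) rather than merely ``topologically reasonable''. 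For R3 I would avoid computing the eight-vertex cube directly and instead reduce to R2: after resolving the crossing common to the two sides of R3, each side contains an R2 configuration, so the R2 equivalence --- applied locally, which is legitimate because Gaussian elimination is a local operation in the complex --- shows that both sides are homotopy equivalent to one and the same complex, and transitivity finishes the move.

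I expect the main obstacle to be the bookkeeping, not the ideas: keeping the edge signs, the height shifts and the $q$-shifts forced by $n_\pm$ straight through the iterated cancellations, and genuinely verifying --- modulo $4TU$, $S$ and $T$ --- that the morphisms manufactured by Gaussian elimination are the claimed identities or zeros. A useful simplification throughout is that the theorem only asserts invariance of the \emph{isomorphism class} in $\Komh(\Mat(\Cobl))$, so for each move (and for each re-ordering in the first two claims) it suffices to exhibit \emph{some} homotopy equivalence; no canonical or natural choice is needed at this stage, which keeps the verifications manageable.
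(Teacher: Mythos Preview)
This thesis does not itself prove the theorem; it is quoted from \cite{ba1}, so the comparison is with Bar-Natan's original argument there. Your outline for the ordering claims and for R3 is essentially that argument: the orderings are handled by the exterior-algebra sign rule, and R3 is reduced to R2 by resolving the shared crossing and recognising an R2 configuration on each side.

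Your R2 mechanism, however, has a genuine gap. No edge of the R2 square is an invertible morphism in $\Cobl$: every edge is a saddle, and saddles are not isomorphisms, nor do they become so modulo $4TU$. Moreover, the two vertices that are ``isotopic to the crossingless $2$-strand smoothing'' sit at \emph{non-adjacent} corners of the square and are not joined by any edge, so there is nothing there for your Gaussian-elimination lemma to bite on. What actually happens in \cite{ba1} is that one of the four smoothings contains a closed circle; explicit chain maps $F$, $G$ and a homotopy $h$ are written down using cups and caps on that circle, and the identities $GF=\mathrm{id}$ and $\mathrm{id}-FG=dh+hd$ are checked directly from $S$ and $4TU$. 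If you insist on Gaussian elimination, you must first \emph{deloop} that circle --- replace it by a column of two empty sets --- so that an honest identity cylinder appears as a matrix entry; only then is there an invertible $\phi$ to cancel. That delooping step is precisely the content of the later chapters of this thesis (and of \cite{ba3}), so invoking it here is circular unless you supply it independently.

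A smaller point on R1: the torus relation $T$ does not enter. The two relations actually used are $S$ (one composite produces a sphere) and $4TU$ in its neck-cutting form (to verify the remaining homotopy identity on the term carrying the extra circle). Otherwise your R1 sketch is fine.
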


\textbf{Summary:} What do we have so far? Given a link diagram one can construct the cube of resolutions. This was done in the original construction of Khovanov, as it is done in any other constructions of Khovanov homology. The novelty of the geometric construction, due to Bar-Natan, is to view this cube of resolutions as a chain complex in an appropriate category. The category is chosen as to make the chain complex homotopy type \emph{a link invariant}. This is done \emph{before} applying any homology theory to get the categorification of the Jones polynomial. We now continue and see how one uses the geometric construction to create homology theories, which give rise to the categorification of the Jones polynomial, as in Khovanov's original construction.

\subsection{Homology theories and the Jones polynomial}

Let us now give a definition of the Jones polynomial, which this theory categorify. We use the combinatorial definition of \emph{a skein relation} or \emph{a bracket}.

\begin{definition}
The Jones polynomial of a link $J(K)$ is the polynomial one gets by the following procedure. First, we apply the following (\emph{bracket}) relation repeatedly on every crossing:
\[\langle\backoverslash \rangle = \langle\hsmoothing\rangle -q\langle\smoothing\rangle\]
Then, we apply the (normalization) relation :
\[\langle \bigcirc \rangle = q^{-1}+q\]
The Jones polynomial (in this specific normalization) is then defined to be
\[J(K)=(-1)^{n_-}q^{n_+-2n_-}\langle K \rangle\]
\end{definition}

By categorification of the Jones polynomial we mean finding a graded homology theory $\mathcal{H}^{i,j}$ such that the graded Euler characteristic of it is equal to $J(K)$ :
\[J(K)=\sum(-1)^iq^jdim\mathcal{H}^{ij}\]

So far we have a chain complex associated to the a link in $\Komh(\Mat(\Cobl))$. We wish to get a homology theory out of it which categorify the Jones polynomial. For this purpose we will need two things. First, we will need some grading. Second, we will need a functor taking us from the additive category $\Komh(\Mat(\Cobl))$  to an Abelian category, where kernels make sense (thus we will get a homology theory). We start with the grading, again following closely the constructions in \cite{ba1}.

\begin{definition} A \emph{graded category} is an additive category $\calC$
with the following two additional properties:
\begin{enumerate}
\item For any two objects $\calO_{1,2}$ in $\calC$, the morphisms
  $\Mor(\calO_1,\calO_2)$ form a graded Abelian group, the composition
  maps respect the gradings and all identity maps are of degree $0$.
\item There is a $\mathbb{Z}$-action $(m,\calO)\mapsto\calO\{m\}$, called
  ``grading shift by $m$'',  on the objects of $\calC$. As plain
  Abelian groups, morphisms are unchanged by this action,
  $\Mor(\calO_1\{m_1\},\calO_2\{m_2\}) = \Mor(\calO_1,\calO_2)$. But
  gradings do change under the action; so if
  $f\in\Mor(\calO_1,\calO_2)$ and $\deg f=d$, then as an element of
  $\Mor(\calO_1\{m_1\},\calO_2\{m_2\})$ the degree of $f$ is $d+m_2-m_1$.
\end{enumerate}
\end{definition}

\begin{definition}
Let $C\in\Mor(\Cob)$ be a cobordism in a cylinder, with $|B|$
vertical boundary components on the side of the cylinder. Define $\deg
C:=\chi(C)-\frac12|B|$, where $\chi(C)$ is the Euler characteristic of
$C$.
\end{definition}

Using the above definitions one can show that $\Cob$ is a graded
category, and so is $\Cobl$. Hence so is the target category $\Komh(\Mat(\Cobl))$.

\begin{definition} Let $K$ be a link diagram with $n_+$ positive
crossings and $n_-$ negative crossings. Let $\Kh(K)$ be the complex whose
chain spaces are $\Kh^r(K):=\| K\|^r\{r+n_+-n_-\}$ and whose
differentials are the same as those of $\| K\|$:
\[
  \def\st{\displaystyle}
  \begin{array}{rccccccc}
    \st\| K\|: & \quad &
      \st\| K\|^{-n^-} & \st\longrightarrow &
      \st\cdots & \st\longrightarrow &
      \st\| K\|^{n_+} \\
    &&&&&& \\
    \st\Kh(K): & &
    \st\| K\|^{-n^-}\{n_+-2n_-\} & \st\longrightarrow &
      \st\cdots & \st\longrightarrow &
      \st\| K\|^{n_+}\{2n_+-n_-\}
  \end{array}
\]
\end{definition}

\begin{theorem} \cite{ba1}
\begin{enumerate}
\item All differentials in $\Kh(K)$ are of degree $0$.
\item $\Kh(K)$ is an invariant of the link $L$ up to degree-$0$ homotopy
  equivalences. I.e., if $K_1$ and $K_2$ are tangle diagrams which
  differ by some Reidemeister moves, then there is a homotopy equivalence
  $F:\Kh(L_1)\to\Kh(L_2)$ with $\deg F=0$.
\end{enumerate}
\end{theorem}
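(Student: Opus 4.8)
I would prove the two assertions separately, obtaining the second from the already-established invariance of $\|K\|$ in $\Komh(\Mat(\Cobl))$.

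\textbf{Part (1): degree-$0$ differentials.} This is a degree count. Every entry of a differential $d^{r}\colon\|K\|^{r}\to\|K\|^{r+1}$ is, up to a sign, a single saddle cobordism carrying identity cylinders on all the circles away from the crossing being changed. A smoothing of a closed link diagram is a disjoint union of circles, so such a cobordism $C$ has no vertical side boundary and $\deg C=\chi(C)$; a short Euler-characteristic count — inserting one saddle into a product cobordism lowers $\chi$ by one — gives $\deg C=-1$. Reading the same morphism as a map $\|K\|^{r}\{r+n_+-n_-\}\to\|K\|^{r+1}\{(r+1)+n_+-n_-\}$, the grading-shift rule adds $\bigl((r+1)+n_+-n_-\bigr)-\bigl(r+n_+-n_-\bigr)=1$ to its degree, so it has degree $-1+1=0$. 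Since this holds entrywise, every differential of $\Kh(K)$ is homogeneous of degree $0$.

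\textbf{Part (2): invariance through degree-$0$ maps.} By the preceding theorem, whenever $K_1$ and $K_2$ differ by a Reidemeister move there is a homotopy equivalence $F\colon\|K_1\|\to\|K_2\|$ in $\Komh(\Mat(\Cobl))$, with homotopy inverse $G$ and homotopies $h,h'$ satisfying $GF-1=dh+hd$ and $FG-1=dh'+h'd$. The reduction I would make is that it suffices to check that $F$, read as a map $\Kh(K_1)\to\Kh(K_2)$, is homogeneous of degree $0$: granting this, and using that the differentials are degree $0$ by Part~(1), extracting the degree-$0$ parts of the two homotopy relations yields $G_0F-1=dh_0+h_0d$ and $FG_0-1=dh'_0+h'_0d$, so $G_0$ is a degree-$0$ homotopy inverse of $F$ and the entire equivalence lives in degree $0$. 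Thus Part~(2) becomes purely a matter of grading bookkeeping inside the proof of the invariance theorem.

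Concretely, I would go through the moves. I would package the R1 and R2 reductions via the delooping isomorphism $\bigcirc\cong\varnothing\{1\}\oplus\varnothing\{-1\}$ (whose structure maps are births/deaths with the appropriate shifts, hence degree $0$ by the computation of Part~(1)) followed by Gaussian elimination (inclusions and projections of direct summands, tautologically degree $0$), so that $F$ is manifestly a composite of degree-$0$ maps; for R3 I would instead take the explicit homotopy equivalence provided by the invariance theorem and verify directly that every component is homogeneous of degree $0$. In each case one must reconcile three sources of degree: the grading shifts carried internally by $\|K_1\|$ and $\|K_2\|$, the change of the pair $(n_+,n_-)$ (R1 moves one of them by $1$, R2 moves both by $1$, R3 moves neither), and the homological reindexing of the surviving summands; the content of the theorem is that the shift $\{r+n_+-n_-\}$ is exactly the one for which these cancel, the categorified shadow of the fact that $(-1)^{n_-}q^{n_+-2n_-}$ is the correct normalizing factor for $\langle K\rangle$. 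I expect the genuine obstacle to be this bookkeeping for R1 and R2, the moves where $(n_+,n_-)$ changes, where a stray ``bubble'' circle must be delooped and the complex is reindexed homologically: one has to see that the $\{\pm1\}$'s produced by the delooping birth/death, the change in $\{r+n_+-n_-\}$ coming from $n_\pm$, and the homological shift all add to zero. R3 is longer to write out but has no normalization to reconcile.
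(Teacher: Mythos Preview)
The thesis does not prove this theorem at all: it is quoted from \cite{ba1} as background in the introduction, with no argument given here. So there is no ``paper's own proof'' to compare against; the reference \cite{ba1} constructs explicit chain maps and homotopies for each Reidemeister move, and the degree claim is verified by computing $\chi(C)-\tfrac12|B|$ for each of those cobordisms.

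Your Part~(1) is the standard and correct count. Your reduction at the start of Part~(2) --- take the homotopy equivalence $F$ supplied by the invariance theorem, check it is homogeneous of degree $0$, and then extract the degree-$0$ components of $G$, $h$, $h'$ --- is valid, since $\Mor$-groups here are \emph{graded} abelian groups and both $d$ and $F$ are assumed homogeneous of degree $0$; the componentwise equalities $G_0F-1=dh_0+h_0d$, $FG_0-1=dh'_0+h'_0d$ follow exactly as you say. This is a pleasant shortcut compared to checking the degrees of $G$, $h$, $h'$ by hand.

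The gap is in how you propose to check that $F$ itself has degree $0$ for R1 and R2. You invoke the delooping isomorphism $\bigcirc\cong\varnothing\{+1\}\oplus\varnothing\{-1\}$, but that isomorphism does \emph{not} hold in the plain category $\Cobl$ of the theorem: in this thesis it is only established later, and only with $2$ invertible (Chapter~\ref{chap:ComplexReductionQ}) or after singling out a special circle over $\mathbb{Z}$ (Chapter~\ref{chap:ComplexReductionZ}); its structure maps involve half-handles or necks to a special component, not just births and deaths. So you cannot package the R1/R2 argument this way in the generality stated. What actually works --- and what \cite{ba1} does --- is exactly what you propose for R3: take the explicit cobordisms used in the invariance proof and compute their degrees directly. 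The $(n_+,n_-)$ bookkeeping you describe is then the right thing to check.
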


We now have a graded version of the geometric chain complex in our hands. Moreover, the grading was chosen in such a way as to fit Khovanov's original construction in \cite{kho2}, which categorified the Jones polynomial. We do not have a homology theory though. There are various types of functors one can apply to the geometric complex.
Let $\calA$ be some arbitrary Abelian category (category of modules for example). Any functor $\calF:\Cobl\to\calA$ extends to a functor $\calF:\Komh(\Mat(\Cobl))\to\Kom(\calA)$. Thus for any link diagram $L$,
$\calF\Kh(L)$ is an ``ordinary'' chain complex and an up-to-homotopy invariant of
the link $L$. Thus the isomorphism class of the homology $H(\calF\Kh(L))$ is an invariant of $L$. If in addition $\calA$ is graded and the functor $\calF$ is degree-respecting then the homology $H(\calF\Kh(L))$ is a graded invariant of $L$.
\\

One choice of such functors is a topological quantum field theory (a TQFT). These functors on $\Cob$ are valued in a category of graded modules over a ring and map disjoint unions to tensor products (``tensorial'' functors). TQFT functors are classified by the Frobenius algebra $V$ which satisfies $\calF(\bigcirc)=V$ (see \cite{ab} for all the appropriate definitions). Khovanov's original choice of Frobenius algebra was the graded $\mathbb{Q}$-module freely generated by two elements $\{v_\pm\}$ with $\deg v_\pm=\pm1$ defined on the morphism $\calF(\epsg{5mm}{POPSymbol})=\Delta:V\to V\otimes V$ and
$\calF(\epsg{5mm}{InvertedPOPSymbol})=m:V\otimes V\to V$ given by

\[
  \Delta: \begin{cases}
    v_+ \mapsto v_+\otimes v_- + v_-\otimes v_+ &\\
    v_- \mapsto v_-\otimes v_-  &
  \end{cases}
  m: \begin{cases}
    v_+\otimes v_-\mapsto v_- &
    v_+\otimes v_+\mapsto v_+ \\
    v_-\otimes v_+\mapsto v_- &
    v_-\otimes v_-\mapsto 0
  \end{cases}
\]
\\

After identifying $v_-$ with X and $v_+$ with 1 This Frobenius algebra is $V=\mathbb{Q}[X]/X^2$. This functor is well defined on $\Cobl$ (satisfies the S/T/$4TU$ relations) and thus extends to $\Komh(\Mat(\Cobl))$, creating a homology theory on the geometrical complex. By comparing constructions, from \cite{kho2} one gets that the graded Euler characteristic of $\mathcal{H}\calF Kh(L)$ is the Jones polynomial $J(L)$. Thus the geometric formalism succeeds in reproducing Khovanov's link homology theory and generalizes it to some extent.
\\

Another type of functors are the ``tautological'' functors.
\begin{definition} Let $\calO$ be an object of $\Cobl$. The tautological functor $\calF_\calO:\Cobl\to\mathbb{Z}-mod$ is defined on objects by $\calF_\calO(\calO'):=\Mor(\calO,\calO')$ and on morphisms by composition on the left. \end{definition}

It was shown in \cite{ba1} that
\[
  \calF_(\calO'):=\mathbb{Z}[\frac{1}{2}]\otimes_\mathbb{Z}\Mor(\emptyset,\calO')/((g>1)=0)
\]
where $(g>1)=0$ means mod out all surfaces with genus higher than 1, is equivalent to Khovanov's original homology theory (the one mentioned above).
\\

It is the place to say that the geometric complex has some other advantages such as being a local theory, that is, defined over tangles and behaves well under tangle compositions (property that the original theory lacked). It is also the place to mention that now that you read this part of the introduction chapter, you can consider yourself an expert in the field and continue on to the next part of the introduction. The next section finally says something about this thesis !

\section{Expert introduction}

The geometric formalism of Khovanov link homology (described fully in the previous section) gives not only a visual description of the ``standard'' Khovanov link homology construction, but also creates a unifying underlying framework for any Khovanov type link homology theory. It emerged together with the development of the algebraic language used in the categorification process but was not explored as much.
\\

The basic idea of the geometric formalism is as follows (see section \ref{susection:cubefigure} for full details, if needed). Given a link diagram $D$ one builds the ``cube of resolutions'' from it (a cube built of all possible 0 and 1 smoothings of the crossings). The edges of the cube are then given certain surfaces (cobordisms) attached to them (with the appropriate signs). The entire cube is ``summed'' into a complex (in the appropriate geometric category) while taking care of degree issues. The figure for the Trefoil knot example, produced in section \ref{susection:cubefigure}, should serve the reader as a reminder.
\\

%\begin{sidewaysfigure}
%$\eps{180mm}{main2}$
%\end{sidewaysfigure}

The category in which one gets a link invariant is
$\Komh(\Mat(\Cobl))$, the category of complexes, up to homotopy,
built from columns and matrices of objects and morphisms
(respectively) taken from $\Cobl$. $\Cobl$ is the category of
2-dimensional (orientable) cobordisms between 1 dimensional objects
(circles), where we allow formal sums of cobordisms over some ground
ring, modulo the following local relations:
\[ The~4TU~relation:
\begin{array}{c}
\includegraphics[height=1cm]{figs/4Tu.eps}
\end{array}
\]
\[
The~S~relation: \eps{7mm}{S}=0
\]
\[
The~T~relation: \eps{7mm}{T}=2.
\]

Though studied in ~\cite{ba1}, the full scope of the geometric theory
was not explored, and only various reduced cases (with extra
geometrical relations put and ground ring adjusted) were used in
connection with TQFTs and homology calculations. A full understanding of the interplay between TQFTs used to create different link homology theories and the underlying geometric complex was not achieved although the research on the TQFT side is
considerably advanced ~\cite{kho1}.
\\

The objectives of this thesis are to explore the full geometric
theory (working over $\mathbb{Z}$ with no extra relations imposed)
in order to determine the algebraic structure governing the geometric
theory, the universality properties of the complex and the strength of the geometric complex relative to the various TQFTs applied to it. We also wish to know how to extract all the various TQFTs in a simple unified way out of the geometric complex, and how to make the full geometric complex computable efficiently.
\\

We start by classifying surfaces with boundary modulo the 4TU/S/T relations, and thus getting hold on the structure of the underlying category of the geometric complex. For this purpose we will introduce the \emph{genus generating operators} and use them to extend the ground ring. We prove a useful lemma regarding the free move of 2-handles between components of surfaces in $\Cobl$ which combines with the genus generating operators to produce the classification. This introduces the topological/geometric motivation for the rest of the thesis. Chapter \ref{chap:ClassificationQ} presents the classification when 2 is invertible in the ground ring and chapter \ref{chap:ClassificationZ} presents the general case over $\mathbb{Z}$.
\\

We continue to construct a reduction of the complex associated to a link.
We find an isomorphism of complexes that reduces the complex into one in a simpler
category. This category has only one object and the entire complex is composed of columns of that single object. The complex maps are matrices with monomial entries in one variable (a genus generating operator). Thus the complex is equivalent to one built from free modules over a polynomial ring in one variable. Chapter \ref{chap:ComplexReductionQ} presents these results when 2 is invertible and chapter \ref{chap:ComplexReductionZ} presents these results for the general case over $\mathbb{Z}$.
\\

The reduction theorem presents us with a ``pre-TQFT'' structure of purely
topological/geometric nature. It turns out that the underlying structure of the full geometric complex associated to a link (over $\mathbb{Z}$) is the same as the one given by the \emph{co-reduced} link homology theory using the following TQFT (over
$\mathbb{Z}[H]$):
\[
  \Delta_1: \begin{cases}
    v_+ \mapsto v_+\otimes v_- + v_-\otimes v_+  - H v_+\otimes v_+ &\\
    v_- \mapsto v_-\otimes v_- &
  \end{cases}
  \]
  \[
  m_1: \begin{cases}
    v_+\otimes v_-\mapsto v_- &
    v_+\otimes v_+\mapsto v_+ \\
    v_-\otimes v_+\mapsto v_- &
    v_-\otimes v_-\mapsto Hv_-
  \end{cases}
\]
\\

In chapter \ref{chap:TQFT} we explore the most general TQFTs that can be applied to the geometric complex in order to get a link homology theory. This will result in a theorem which states that the above co-reduced TQFT structure is the universal TQFT as far as
information in link homology is concerned. We also get (chapter \ref{chap:TQFT}) a new unified way of extracting all TQFTs directly from the geometric complex. This process is named \emph{promotion} and can be used to get new unfamiliar homology theories as well. These new homology theories contain limited (controlled) amount of information, coming only from surfaces up to a certain genus (in some sense resembles a perturbation expansion in the genus). We get an extrapolation between the standard Khovanov link homology theory and our universal TQFT. This chapter  (and the whole thesis) simplifies, completes and takes into a new direction some of the results of ~\cite{kho1}. It also generalizes some of the results of ~\cite{ba1} and completes it in some respects.
\\

Chapter \ref{chap:CompExt} discusses how to compute efficiently the universal theory. It also presents computational examples of the universal theory for many knots. We state various results on specific TQFTs by looking at the different promotions and parameter specifications of the universal theory. We relate the universal theory to other tools used in the field, mainly spectral sequences. The efficient computations allow us also to use the universal theory in order to simplify, explain and check various phenomenological statements regarding link homology. Chapter \ref{chap:CompExt} also shortly discusses various possible extensions of the techniques introduced in this thesis to various generalizations of the Khovanov link homology theory ($sl(3)$ link homology, open-closed and unoriented TQFTs). Finally, chapter \ref{chap:comments} discusses some topics related to the universal theory that are best left to the end.
\\

During my studies I have created two major works. The first was published in \cite{naot2}, and will not be discussed in this thesis. The second work was published in \cite{naot1}, and I base my thesis on this work. Chapters \ref{chap:ClassificationQ}, \ref{chap:ComplexReductionQ}, \ref{chap:ClassificationZ}, \ref{chap:ComplexReductionZ} and \ref{chap:TQFT} reproduce chapters from \cite{naot1}.
\\ 
\chapter{Surfaces modulo the 4TU/S/T relations with 2 invertible}\label{chap:ClassificationQ}

The geometric complex, as explained in the introduction, is an invariant of links and tangles which takes values in the category $\Komh(\Mat(\Cobl))$. We wish to study this
category and hopefully reduce the geometric complex into a simpler complex in
a simpler category. This chapter is devoted to the
study of the underlying category $\Cobl$ (2-dimensional orientable
cobordisms between unions of circles) in the case when 2 is
invertible in the ground ring we work over ($\mathbb{Q}$ or
$\mathbb{Z}[\frac{1}{2}]$, for example). More specifically, we will classify surfaces modulo 4TU, S and T relations, which will give us the morphism groups of $\Cobl$.
\\

It is known ~\cite{ba1} that when 2 is invertible in the ground ring
we work over, the 4TU relation is equivalent to \emph{the neck
cutting relation}:
\[
NC ~relation:
~2\begin{array}{c}\includegraphics[height=7mm]{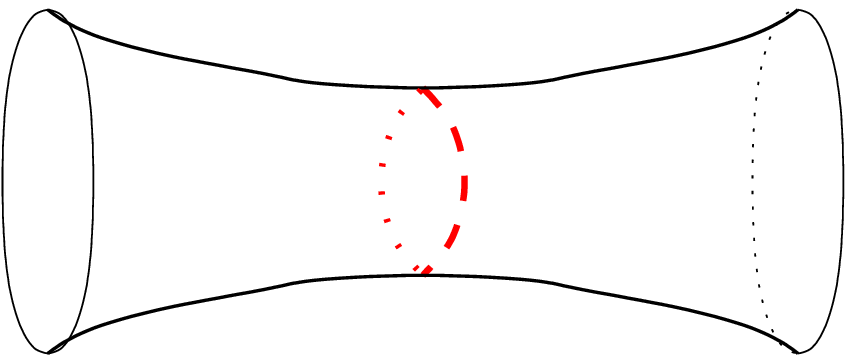}\end{array}
  =\begin{array}{c}\includegraphics[height=7mm]{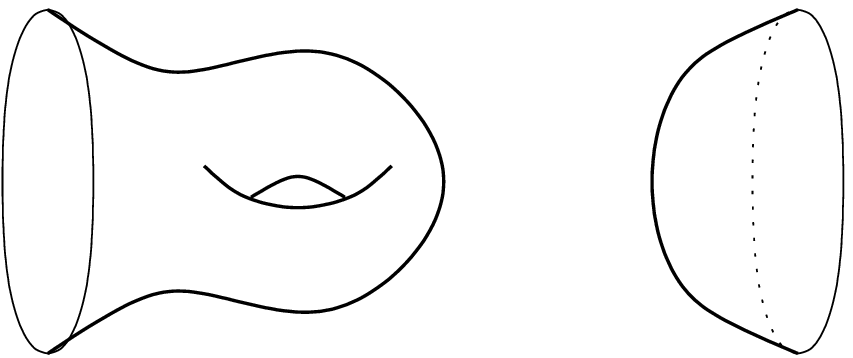}\end{array}
  +\begin{array}{c}\includegraphics[height=7mm]{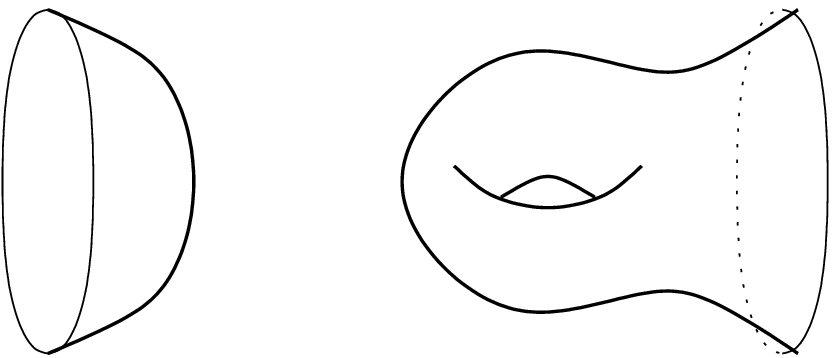}\end{array}
\]
We will use this relation throughout the chapter.
\\

\section{Notations}
We will try to keep discussion on a ``picture level'' as much as possible, yet we still need some formal notations. Let $\Sigma_g(\alpha_1,\alpha_2,\cdots)$
denote a surface with genus g and boundary circles
$\alpha_1,\alpha_2,\cdots$. A disconnected union of such surfaces
will be denoted by
$\Sigma_{g_1}(\alpha_1,\cdots)\Sigma_{g_2}(\beta_1,\cdots)$. If the
genus or the boundary circles are not relevant for the argument at
hand, they will be omitted. Whenever we have a piece of surface
which looks like $\epsg{10mm}{CNN}$ we will call it \emph{a neck}.
If cutting a neck separates the component into 2 disconnected
components then it will be a \emph{separating neck}, if not then it
is a \emph{non-separating neck} which means it is a part of \emph{a
handle} on the surface. A handle on the surface always looks locally
like $\eps{5mm}{vm}$, and by \emph{2-handle} on the surface we mean
a piece of surface which looks like $\eps{5mm}{2handle}$.

\section{The 2-handle lemma}
We start with proving a lemma that will become useful in classifying
surfaces modulo the 4TU/S/T relations.

\begin{lemma}
In $\Cobl$ 2-handles move freely between components of a surface.
I.e. modulo the 4TU relation, a surface with a 2-handle on one of
its connected components is equal to the same surface with the
2-handle removed and glued on a different component (see the picture
below the proof for an example).
\end{lemma}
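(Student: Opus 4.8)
The plan is to show that a 2-handle can be "detached" from one component and "reattached" to another, using only the 4TU relation (equivalently, when 2 is invertible, the neck cutting relation NC). First I would set up the picture precisely: a 2-handle sitting on a component $C_1$ of a surface $\Sigma$ is a subsurface looking like $\eps{5mm}{2handle}$, and I want to compare $\Sigma$ with the surface $\Sigma'$ obtained by cutting that 2-handle off $C_1$ and regluing it onto another component $C_2$. The natural move is to locate a ball meeting $\Sigma$ in four disks: two disks on the "feet" of the 2-handle attached to $C_1$, and two disks on $C_2$ where we want to transfer it. Applying the 4TU relation $C_{12}+C_{34}=C_{13}+C_{24}$ to this configuration should relate the surface where the tube connects the two feet on $C_1$ (i.e. the 2-handle stays on $C_1$) to the surface where the tube connects a foot to $C_2$, modulo surfaces appearing in the other terms.

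The key steps, in order: (1) identify precisely which four disks to use so that one term of the 4TU relation is "2-handle on $C_1$" and another is "2-handle moved to $C_2$"; (2) analyze the remaining two terms of the relation and show they cancel or reduce trivially — this is where one exploits that the two feet of the 2-handle lie on a small piece of $C_1$, so that reconnecting them "the other way" either produces the same surface back or produces a surface with an obvious extra sphere/cap that simplifies. I expect one needs to apply 4TU perhaps twice, or combine it with an isotopy that slides the tube, so that the unwanted terms on the two sides match and cancel, leaving exactly the desired equality $\Sigma = \Sigma'$. A clean way to organize this: first prove the baby case that a 2-handle can be slid along its own component and contracted to sit next to any chosen disk on that component (pure isotopy plus possibly NC), then do the single 4TU application that jumps it across to the neighboring piece of $C_2$.

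The main obstacle I anticipate is bookkeeping the four-disk configuration so that the 4TU terms genuinely pair up the way I want — in particular making sure that when the tube is rerouted in the "$C_{13}$" and "$C_{24}$" terms, the resulting surfaces are isotopic to each other (so they cancel) rather than being four genuinely distinct surfaces. Getting the disks placed on the correct sides of the 2-handle's feet is the delicate point; a wrong choice gives a relation that mixes in genus changes instead of a clean transfer. Once the configuration is right, the rest is routine: invariance of the surface class under boundary-preserving isotopy finishes it, and the picture below the statement should make the chosen ball and disks explicit.
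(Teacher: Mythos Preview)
Your plan has a genuine gap: the four-disk configuration you describe does not produce the terms you want, and the hoped-for cancellation does not happen.

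Concretely, suppose you take $D_1,D_2$ on $C_1$ (near where you want the 2-handle) and $D_3,D_4$ on $C_2$. The 4TU relation $C_{12}+C_{34}=C_{13}+C_{24}$ then reads: (tube inside $C_1$, i.e.\ one extra handle on $C_1$) $+$ (tube inside $C_2$, i.e.\ one extra handle on $C_2$) $=$ (tube from $C_1$ to $C_2$) $+$ (tube from $C_1$ to $C_2$). The right-hand terms are isotopic, so this is exactly the NC relation $2\cdot(\text{connected})=(\text{handle on }C_1)+(\text{handle on }C_2)$. Neither side contains a \emph{2}-handle on either component, so nothing here transfers a 2-handle. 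Putting one handle on $C_1$ beforehand does not fix this: then $C_{12}$ is the 2-handle on $C_1$, but $C_{34}$ is one handle on each of $C_1$ and $C_2$, not the 2-handle on $C_2$. Your step (2) --- ``the remaining two terms cancel or reduce trivially'' --- is precisely where the argument breaks: they do not cancel, and no isotopy turns a single handle into a 2-handle.

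The paper's proof uses a different, and cleaner, idea. It does not apply 4TU to the two disconnected components at all. Instead it starts from an auxiliary \emph{connected} local picture: a tube joining the two components, with one handle sitting on that tube. This picture has two natural necks --- one ``vertical'' (across the tube) and one ``horizontal'' (through the handle). Applying the NC relation (a consequence of 4TU over any ring, no need for $2$ invertible) to each neck gives two different expressions, both equal to twice the connected picture:
\[
\eps{7mm}{2handle}\eps{7mm}{vp}+\eps{7mm}{vm}\eps{7mm}{vm}
\;=\;2\cdot\eps{15mm}{2lemma}\;=\;
\eps{7mm}{vm}\eps{7mm}{vm}+\eps{7mm}{vp}\eps{7mm}{2handle}.
\]
The mixed term $\eps{7mm}{vm}\eps{7mm}{vm}$ appears on both sides and cancels, leaving exactly $\eps{7mm}{2handle}\eps{7mm}{vp}=\eps{7mm}{vp}\eps{7mm}{2handle}$. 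The trick you are missing is this passage through a connected intermediary surface cut two ways; your direct four-disk attack on the disconnected configuration cannot see it.
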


\begin{proof}
The proof is an application of the neck cutting relation (NC), which
follows from the 4TU relation (over any ground ring). We look at a
piece of surface with a handle and two necks on it which
looks like $\eps{20mm}{2lemma}$. The rest of the surface continues outside the bottom circles and is not drawn. Applying the NC relation to the vertical dashed neck and to the horizontal dashed neck gives the following two equalities:
\[\eps{7mm}{2handle}\eps{7mm}{vp} +
\eps{7mm}{vm}\eps{7mm}{vm} = 2\cdot\eps{15mm}{2lemma} =
\eps{7mm}{vm}\eps{7mm}{vm} + \eps{7mm}{vp}\eps{7mm}{2handle}
\]
We get $\eps{7mm}{2handle}\eps{7mm}{vp} =
\eps{7mm}{vp}\eps{7mm}{2handle}$. Since these are the local parts of
\emph{any} surface the lemma is proven.
\qed
\end{proof}

\vspace{4mm} Notice that this lemma applies to any ground ring we
work over (in $\Cobl$), and does not require 2 to be invertible.

\vspace{3mm} \emph{Example:} The following equality holds in
$\Cobl$:
$\eps{9mm}{3handle}\eps{7mm}{2handle}$=$\eps{7mm}{vm}\eps{10mm}{4handle}$=
$\eps{10mm}{5handle}\eps{7mm}{vp}$.

\vspace{3mm}

The 2-handle lemma allows us to reduce the classification problem to
a classification of surfaces with \emph{at most} one handle in each
connected component. This is done by extending the ground ring $R$
to $R[T]$ with $T$ a ``global'' operator (i.e. acts \emph{anywhere}
on the surface) defined as follows :

\begin{definition}
\emph{The 2-handle operator}, denoted by $T$, is the operator that
glues a 2-handle somewhere on a surface (anywhere).
\end{definition}

\emph{Examples}: $T\cdot\Sigma_{g_1}(\alpha)\Sigma_{g_2}(\beta) =
\Sigma_{g_1+2}(\alpha)\Sigma_{g_2}(\beta) =
\Sigma_{g_1}(\alpha)\Sigma_{g_2+2}(\beta)$ or $T\cdot
\eps{6mm}{2handle}=\eps{10mm}{4handle}$.
\\

The 2-handle operator is given degree $-4$ according to the degrees conventions of \cite{ba1}, thus keeping all the elements of the theory graded.
\\

\begin{remark}
The notation for the 2-handle operator $T$ should not be confused with the T relation. Whenever an equation of surfaces appear (like in the example above and below), or a ground ring is extended, $T$ always mean the 2-handle operator and got nothing to do with the T relation.
\end{remark}

Observe that since the torus equals 2 in $\Cobl$ (the T relation), multiplying a surface $\Sigma$ with $2T$ (twice the 2-handle operator) is equal to taking the union of $\Sigma$ with the genus 3 surface: \[2T \cdot \Sigma = T \cdot \Sigma \cup \epsg{5mm}{g1} = \Sigma \cup \epsg{9mm}{g3}\]
\\
Note that the 2-handle operator does not operate on the empty
cobordism. Since 2 is invertible the operation of the 2-handle
operator can be naturally presented as the union with
$\epsg{9mm}{g3}/2$, thus allowing meaningful operation on the empty
cobordism. We will adopt this presentation of the 2-handle operator $T$, and call it
\emph{the genus-3 presentation.}

\section{Classification of surfaces modulo 4TU/S/T over $\mathbb{Z}[\frac{1}{2}]$}

\begin{proposition}
Over the extended ring $\mathbb{Z}[\frac{1}{2},T]$, where $T$ is the
2-handle operator in the genus-3 presentation, the morphism groups
of $\Cobl$ (surfaces modulo S, T and 4TU relations) are generated
freely by unions of surfaces with exactly one boundary component and
genus 0 or 1 ($\eps{5mm}{vp},\eps{5mm}{vm}$), together with the
empty cobordism.
\end{proposition}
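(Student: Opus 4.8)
The plan is to split the statement into two claims --- that the listed surfaces \emph{span} every morphism group of $\Cobl$ once the ground ring is enlarged to $\mathbb{Z}[\frac{1}{2},T]$, and that they span it \emph{freely} --- and to settle each separately. The tools on hand are the neck cutting relation $NC$ (available because $2$ is invertible), the $2$-handle lemma just proved, and the genus-$3$ presentation of $T$, which is precisely what allows $T$ to act on closed components and on the empty cobordism.

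\textbf{Spanning.} I would induct on the number of boundary circles of a connected component. Decompose a given cobordism $C$ into its connected components; since disjoint union is bilinear over the ground ring it is enough to reduce each component and multiply. For a connected component $\Sigma_g(\alpha_1,\dots,\alpha_k)$ with $k\ge 2$, apply $NC$ to a neck parallel to $\alpha_1$, producing
\[
  2\,\Sigma_g(\alpha_1,\dots,\alpha_k) \;=\; \Sigma_1(\alpha_1)\,\Sigma_g(\alpha_2,\dots,\alpha_k)\;+\;\Sigma_0(\alpha_1)\,\Sigma_{g+1}(\alpha_2,\dots,\alpha_k) .
\]
This peels off a one-boundary surface of genus $0$ or $1$ --- one of the claimed generators $\eps{5mm}{vp},\eps{5mm}{vm}$ --- and leaves a component with one fewer boundary circle. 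Dividing by $2$ and iterating down to $k=1$, a one-boundary component $\Sigma_g(\alpha)$ is finally rewritten as $T^{\lfloor g/2\rfloor}\cdot\Sigma_{g\bmod 2}(\alpha)$ by repeatedly pushing $2$-handles into the operator $T$. A closed component is handled likewise: push a $2$-handle onto a disjoint auxiliary torus (free of charge, since a torus equals $2$), lowering the component's genus by $2$ while raising the torus to genus $3$; iterating, a closed genus-$g$ component becomes $0$ for $g$ even (by the $S$ relation) and the scalar $2T^{\lfloor g/2\rfloor}\in\mathbb{Z}[\frac{1}{2},T]$ for $g$ odd (using that a torus equals $2$, together with the genus-$3$ presentation). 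Collecting terms writes $C$ as a $\mathbb{Z}[\frac{1}{2},T]$-linear combination of disjoint unions carrying one $\Sigma_0$ or $\Sigma_1$ on each boundary circle, together with the empty cobordism in the boundaryless case.

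\textbf{Freeness.} I would establish independence through a closing-up pairing. Fix objects $\calO,\calO'$ with $N$ boundary circles in total; the candidate generators $g_\delta$ of $\Mor(\calO,\calO')$ are indexed by functions $\delta$ assigning a genus $\delta_i\in\{0,1\}$ to each circle $\alpha_i$, with $g_\delta$ the disjoint union of the corresponding $\Sigma_{\delta_i}(\alpha_i)$. For a second such $\epsilon$, let $g_\epsilon^{\vee}$ cap each $\alpha_i$ off with a copy of $\Sigma_{\epsilon_i}$; gluing $g_\epsilon^{\vee}$ onto $g_\delta$ closes the picture into a disjoint union of closed surfaces of genera $\epsilon_i+\delta_i\in\{0,1,2\}$, an element of $\Mor(\emptyset,\emptyset)$. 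By the $S$ relation a genus-$0$ factor (sphere) is $0$, a genus-$1$ factor (torus) is $2$, and by the reduction above a genus-$2$ factor is $0$; hence the pairing of $g_\delta$ with $g_\epsilon^{\vee}$ equals $2^N$ when $\epsilon_i=1-\delta_i$ for all $i$ and $0$ otherwise. After identifying $\Mor(\emptyset,\emptyset)$ with $\mathbb{Z}[\frac{1}{2},T]$ (by the spanning reduction its elements are scalars, $T$ entering as a free polynomial variable by the genus-$3$ presentation), the Gram matrix of this pairing is $2^N$ times a permutation matrix, hence invertible since $\mathbb{Z}[\frac{1}{2},T]$ is a domain in which $2$ is a unit. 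Therefore no nontrivial $\mathbb{Z}[\frac{1}{2},T]$-linear relation can hold among the $g_\delta$, and with the spanning step this shows they form a free basis; the boundaryless case reduces to the freeness of $\mathbb{Z}[\frac{1}{2},T]$ over itself on the empty cobordism.

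\textbf{The main obstacle.} I expect the real work to lie in the spanning step --- making the neck-cutting induction airtight, confirming that every connected component genuinely reduces to single-boundary pieces, and honestly tracking the surplus genus created at each cut so that it is wholly absorbed into powers of $T$ --- together with the verification that closed components collapse to scalars of $\mathbb{Z}[\frac{1}{2},T]$, which is exactly the point that forces the genus-$3$ presentation of $T$ into the statement. The freeness step is by comparison the soft observation that the dual-caps pairing has an anti-diagonal Gram matrix of unit determinant.
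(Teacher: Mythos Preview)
Your spanning argument is the paper's, just organized as an induction on the number of boundary circles rather than packaged into a single explicit reduction formula $\Psi$; the paper writes down $\Psi$ and then observes that it is obtained by iterated neck-cutting, which is exactly your peeling-off procedure.

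For freeness you take a genuinely different route. The paper verifies directly that its reduction map $\Psi$ annihilates the NC relation, so that $\Psi$ descends to a well-defined $\mathbb{Z}[\tfrac12,T]$-linear retraction from the morphism group onto the free module on the proposed generators; this is a four-case parity check. Your closing-up pairing is slicker and avoids that computation, but it hides a circularity: the Gram matrix lives in $\Mor(\emptyset,\emptyset)$, and from $2^N c_{\delta}=0$ there you may only conclude $c_\delta=0$ in $\mathbb{Z}[\tfrac12,T]$ once you know that the structure map $\mathbb{Z}[\tfrac12,T]\to\Mor(\emptyset,\emptyset)$ is injective. That injectivity is exactly the $N=0$ case of the proposition, and your sentence ``the boundaryless case reduces to the freeness of $\mathbb{Z}[\tfrac12,T]$ over itself on the empty cobordism'' restates this rather than proving it --- the spanning step only gives the surjection. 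The gap is genuine but cheap to close: since all the defining relations are degree-homogeneous, it suffices to show each $\Sigma_{2k+1}\neq 0$ in $\Cobl$, and any TQFT respecting 4TU/S/T that assigns nonzero value to odd-genus closed surfaces (for example $\calF_T$ with algebra $\mathbb{Q}[T][X]/(X^2-T)$, where $\Sigma_{2k+1}\mapsto 2^{2k+1}T^k$) does the job. With that base case supplied, your pairing argument is a clean alternative to the paper's direct verification of $\Psi(\mathrm{NC})=0$.
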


In other words, each morphism set from $n$ circles to $m$ circles is a
free module of rank $2^{n+m}$ over $\mathbb{Z}[\frac{1}{2},T]$,
where $T$ is the 2-handle operator in the genus-3 presentation. The rest of this chapter consists of the proof to this proposition.
\\

\begin{proof}
Given a surface $\Sigma$ with (possibly) few connected components
and with (possibly) few boundary circles we can apply the 2-handle
lemma and reduce it to a surface that has at most genus 1 (i.e one
handle) in each connected component. This is done by extending the
ground ring we work over to $\mathbb{Z}[\frac{1}{2},T]$, where $T$
is the 2-handle operator.
\\

When 2 is invertible the 4TU relation is equivalent to the NC
relation, and by dividing the relation equation by 2 we see that we
can cut any neck and replace it by the right hand side of the
equation :
$\begin{array}{c}\includegraphics[height=5mm]{figs/CNN.eps}\end{array}
  =\frac{1}{2}\begin{array}{c}\includegraphics[height=5mm]{figs/CNL.eps}\end{array}
  +\frac{1}{2}\begin{array}{c}\includegraphics[height=5mm]{figs/CNR.eps}\end{array}$.
We will mod out this relation and reduce any surface into a combination of free generating surfaces. We will need to show that our reduction uses only the NC relation and is well defined (that is reducing the NC relation to zero).
\\

Let us first look at the classification of surfaces with no boundary
at all. Using the NC relation it is not hard to show that $2\Sigma_{2g}=0$ and
with 2 invertible $\Sigma_{2g}=0$. Thus we can consistently extend
the ground ring with the operator $T$ and get: $\Sigma_g =
T^{\frac{g}{2}}\cdot\epsg{5mm}{S}=0$ for $g$ even, and $\Sigma_g =
T^{\frac{g-1}{2}}\cdot\epsg{5mm}{g1}=2T^{\frac{g-1}{2}}$ for $g$
odd. Thus every closed surface is reduced to a ground ring element.
Allowing the empty cobordism (``empty surface''), every closed
surface will reduce to it over $\mathbb{Z}[\frac{1}{2},T]$ when we
use the genus-3 presentation of $T$. The closed surfaces morphism
set is thus isomorphic to $\mathbb{Z}[\frac{1}{2},T]$ and through
the genus-3 presentation acts on other morphism sets in the
category, creating a module structure.
\\

Now, when cutting a separating neck in a component, one reduces a
component into two components at the expense of adding handles. Thus
using only the NC relation, over $\mathbb{Z}[\frac{1}{2},T]$, every
surface reduces to a sum of surfaces whose components contain
\emph{exactly} one boundary circle and at most one handle.
\\

The reduction can be put in a formula form, considered as a map
$\Psi$ taking a surface $\Sigma_g(\alpha_1,\ldots,\alpha_n)$ into a
combination of generators (a sum of unions of surfaces with one
boundary circle and genus 0 or 1). This map is an isomorphism of
modules over the closed surfaces ring
($\mathbb{Z}[\frac{1}{2},T]$-modules):

\vspace{3mm} \emph{The Surface Reduction Formula Over
$\mathbb{Z}[\frac{1}{2},T]$:}
\begin{equation}
\Psi : \Sigma_g(\alpha_1,\ldots,\alpha_n) \mapsto \frac{1}{2^{n-1}}
\cdot \sum_{i_1,\ldots,i_n=0,1}{odd(g+z) \cdot
T^{\lfloor\frac{g+z}{2}\rfloor}
\cdot\Sigma_{i_1}(\alpha_1)\cdots\Sigma_{i_n}(\alpha_n)}
\end{equation}
$z$ is the number of 0's among $\{i_1,\ldots,i_n\}$. $odd(g+z)=1$ if $g+z$ is odd, and $0$ otherwise. If $n=0$ it is understood that the entire sum is dropped and replaced by $odd(g) \cdot T^{\lfloor\frac{g}{2}\rfloor}$ times the empty surface. The
formula extends naturally to unions of surfaces.
\\

A close look at the definition will show that indeed this map uses
only the NC relation to reduce the surface, in other words, the
difference between the reduced surface and the original surface is
only a combination of NC relations. (i.e. $\Psi(\Sigma) - \Sigma =
\sum{NC}$). To see this, one need to show that there is at least one
way of using NC relations to get the formula. This can be done, for
instance, by taking the boundary circles one by one, and each time
cut the unique neck separating \emph{only} this circle from the
surface.
\\

Last, we turn to the issue of possible relations between the
generators. These can be created whenever we apply $\Psi$ to two
surfaces which are related by NC relation and get two different
results. Thus the generators are free from relations if the
reduction formula is well defined, i.e. respects the NC relation.
This is true, as the following argument proves $\Psi(NC)=0$. Look at
the NC relation, $\Sigma_{g_1+1}(\alpha)\Sigma_{g_2}(\beta) +
\Sigma_{g_1}(\alpha)\Sigma_{g_2+1}(\beta) -
2\Sigma_{g_1+g_2}(\alpha,\beta)$, where
$\alpha=\{\alpha_1,\ldots,\alpha_n\}$ and
$\beta=\{\beta_1,\ldots,\beta_m\}$ denote families of boundary
circles. Apply the formula everywhere:
$\frac{1}{2^{n+m-2}}\cdot[odd(g_1+z_1+1)odd(g_2+z_2)
T^{\lfloor\frac{g_1+z_1+1}{2}\rfloor}
T^{\lfloor\frac{g_2+z_2}{2}\rfloor} +odd(g_1+z_1)odd(g_2+z_2+1)
T^{\lfloor\frac{g_1+z_1}{2}\rfloor}T^{\lfloor\frac{g_2+z_2+1}{2}\rfloor}
-
odd(g_1+g_2+z_1+z_2)T^{\lfloor\frac{g_1+z_1+g_2+z_2}{2}\rfloor}]\cdot
\sum{\Sigma_{i_1}(\alpha_1)\cdots\Sigma_{i_n}(\alpha_n)\Sigma_{j_1}(\beta_1)\cdots\Sigma_{j_m}(\beta_m)}$
where $z_1$ is the number of 0's among $\{i_1,\ldots,i_n\}$, $z_2$
is the number of 0's among $\{j_1,\ldots,j_m\}$ and the sum is over
these sets of indices as in formula (1). Going over all 4 options of
$(g_1+z_1,g_2+z_2)$ being (odd/even,odd/even) one can see the result
is always 0.
\\

Formula (1) gives a map from surfaces to combinations of generators.
This map was shown to be well defined ($\Psi(NC)=0$), and uses only
the NC relations ($\Psi(\Sigma) - \Sigma = \sum{NC}$), thus
completing the proof.
\qed
\end{proof} 
\chapter{The structure of the geometric complex over $\mathbb{Q}$}\label{chap:ComplexReductionQ}

Armed with the classification of the morphisms of $\Cobl$ we are ready to simplify the complex associated to a link in the case where 2 is
invertible. We introduce an isomorphism of objects in $\Cobl$, known
as \emph{delooping}, that extends to an isomorphism in
$\Komh\Mat(\Cobl)$. This will reduce the complex associated to a
link into a complex taking values in a simpler category. One of the
consequences is uncovering the underlying algebraic structure of the
geometric complex. We will use the boxed surface notation in which
boxed surface has the geometric meaning of half a handle:
\[\eps{7mm}{dotsurfl}=\frac{1}{2}\eps{7mm}{1handlel}\]

\section{The reduction theorem over $\mathbb{Q}$}

\begin{theorem}
The complex associated to a link, over $\mathbb{Q}$ (or any ground
ring $R$ with 2 invertible), is equivalent to a complex built from
the category of free $\mathbb{Q}[T]$-modules ($R[T]$-modules
respectively).
\end{theorem}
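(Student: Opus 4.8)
The plan is to produce an explicit object isomorphism in $\Cobl$ — \emph{delooping} — that rewrites a circle as a formal direct sum of two grading-shifted copies of the empty smoothing, and then to push it through the functors $\Mat(-)$ and $\Komh(-)$ so that the whole complex $\Kh(K)$ is carried into a complex over the full subcategory of $\Cobl$ on the single object $\emptyset$, which the classification of the previous chapter identifies with free $R[T]$-modules.

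First I would set up the delooping isomorphism $\bigcirc\cong\emptyset\{+1\}\oplus\emptyset\{-1\}$ in $\Mat(\Cobl)$. Its components are the cap cobordism and the cap decorated with a half-handle (the boxed/dotted surface, available precisely because $2$ is invertible), and the inverse has components the cup-with-half-handle and the cup, with signs and degree shifts arranged so that all four maps have degree $0$. The essential check is that the two composites are identities: one composite is the identity of $\emptyset\{+1\}\oplus\emptyset\{-1\}$ by the $S$ relation (a closed sphere is $0$) together with the $T$ relation (a sphere-with-handle is a torus, equal to $2$, cancelling the $\tfrac12$); the other composite is the identity of $\bigcirc$ because the cup--cap pair glued along the circle is exactly a neck, and the $NC$ relation rewrites that neck as the sum of the two half-handled cap$\,\circ\,$cup terms, which is precisely this composite. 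This neck-cutting computation, and the need for $2$ to be invertible, is the only real content; everything after is formal.

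Next I would invoke functoriality: an isomorphism of objects in an additive category $\calC$ induces a block isomorphism in $\Mat(\calC)$ and hence an isomorphism of complexes in $\Kom(\calC)$ and $\Komh(\calC)$ obtained by conjugating every differential. Every smoothing occurring in the cube of resolutions is a disjoint union of finitely many circles, so applying delooping to each circle in turn — formally, taking the disjoint union of the delooping isomorphism with the identity cobordism on the remaining circles — rewrites each chain object $\Kh^r(K)$ as a finite direct sum of grading-shifted copies of $\emptyset$ and conjugates the differentials accordingly, all within degree-$0$ morphisms. The resulting complex lives in $\Mat$ of the full subcategory of $\Cobl$ on the one object $\emptyset$.

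Finally, $\Mor_{\Cobl}(\emptyset,\emptyset)$ is the group of closed surfaces modulo $4TU/S/T$, which by the classification proposition is free of rank $1$ over $\mathbb{Z}[\tfrac12,T]=R[T]$ (with $T$ in the genus-$3$ presentation); so the one-object category above is, after taking $\Mat$, canonically the category of finitely generated free $R[T]$-modules with honest $R[T]$-matrices as morphisms, and $\Komh$ of it is correspondingly a category of complexes of free $R[T]$-modules. Transporting $\Kh(K)$ along these identifications yields the asserted equivalent complex. The main obstacle, as noted, is verifying that the delooping maps are mutually inverse and degree-$0$; I would also remark that one may afterwards run Gaussian elimination over $R[T]$ to shrink the complex, though that is not required by the statement.
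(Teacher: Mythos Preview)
Your proposal is correct and follows essentially the same route as the paper: establish the delooping isomorphism $\bigcirc\cong\emptyset\{-1\}\oplus\emptyset\{+1\}$ via the half-handle cup/cap maps, verify one composite with the $S$ and $T$ relations and the other with the $NC$ relation, then push the isomorphism through $\Mat$ and $\Komh$ and invoke the classification of closed surfaces to identify the resulting one-object category with free $R[T]$-modules. The paper's argument is slightly terser on the functoriality step but otherwise identical.
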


This theorem is a corollary of the following proposition, which we
will prove first :

\begin{proposition}
In $\Mat(\Cobl)$, when 2 is invertible, the circle object is
isomorphic to a column of two empty set objects. This isomorphism
extends (taking degrees into account) to an isomorphism of complexes
in $\Komh\Mat(\Cobl)$, where the circle objects are replaced by
empty sets together with the appropriate induced complex maps.
\end{proposition}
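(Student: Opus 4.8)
\emph{Proof plan.} Everything reduces to one object-level fact: in $\Mat(\Cobl)$ there is a ``delooping'' isomorphism
\[
  \bigcirc \;\cong\; \emptyset\{+1\}\,\oplus\,\emptyset\{-1\}
\]
(a column of two empty sets). Once this is in hand the constructions $\Mat(-)$ and $\Kom(-)$ are functorial, so the isomorphism propagates to the complex $\|K\|$ itself. Thus the plan is: (i) write down explicit mutually inverse degree-$0$ morphisms realizing this isomorphism, built only from cups, caps and the boxed (half-handle) decoration of the previous section; (ii) check that the two composites are identities, which is exactly where the $S$ and $T$ relations and the invertibility of $2$ enter; (iii) spread the isomorphism through the complex one circle at a time.

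For (i)--(ii): let $G\colon\emptyset\{+1\}\oplus\emptyset\{-1\}\to\bigcirc$ be the row matrix with entries a plain birth disk $\emptyset\{+1\}\to\bigcirc$ and a boxed birth disk $\emptyset\{-1\}\to\bigcirc$, and let $F\colon\bigcirc\to\emptyset\{+1\}\oplus\emptyset\{-1\}$ be the column matrix with entries a boxed death disk $\bigcirc\to\emptyset\{+1\}$ and a plain death disk $\bigcirc\to\emptyset\{-1\}$. A degree count --- a disk has degree $1$, a box lowers degree by $2$, together with the grading-shift bookkeeping built into the notion of a graded category --- shows all four entries have degree $0$; this is precisely the clause ``taking degrees into account'' of the statement, and already the boxed decoration, being half a handle, makes sense only because $2$ is invertible. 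Now $F\circ G$ is a $2\times 2$ matrix: its off-diagonal entries are a plain sphere, zero by the $S$ relation, and a twice-boxed sphere, which is $\tfrac14$ of a closed genus-$2$ surface and hence zero by the computation $\Sigma_{2g}=0$ of the previous chapter, while its diagonal entries are each a boxed sphere, i.e.\ $\tfrac12$ of a torus, which the $T$ relation evaluates to $1$; so $F\circ G=\mathrm{id}$. The composite $G\circ F\colon\bigcirc\to\bigcirc$ is the sum of a boxed-cap/plain-cup pair and a plain-cap/boxed-cup pair, which is literally the right-hand side of the neck cutting relation applied to the identity cylinder $\mathrm{id}_{\bigcirc}$; since $2$ is invertible the $4TU$ relation yields that relation, so $G\circ F=\mathrm{id}_{\bigcirc}$. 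Hence $F$ and $G$ are mutually inverse and the delooping isomorphism holds in $\Mat(\Cobl)$.

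For (iii): disjoint union with a fixed $1$-manifold $\alpha_0$ is a degree-preserving additive endofunctor of $\Cobl$ (being local, the $4TU/S/T$ relations are untouched by it), so it extends to $\Mat(\Cobl)$ and to $\Kom(\Mat(\Cobl))$; applying it to the delooping isomorphism gives, for every smoothing $\alpha=\alpha_0\sqcup\bigcirc$ occurring in $\|K\|$, an isomorphism $\alpha\cong\alpha_0\{+1\}\oplus\alpha_0\{-1\}$. Replacing one such summand in the chain objects of $\|K\|$ and conjugating every entry of the differentials by the corresponding copies of $F$ and $G$ produces a complex isomorphic to $\|K\|$ already in $\Kom(\Mat(\Cobl))$ --- hence also in $\Komh(\Mat(\Cobl))$ --- whose new differentials are the ``appropriate induced complex maps'' of the statement and which still compose to zero, being conjugate to $d$. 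Since each smoothing is a disjoint union of finitely many circles, iterating the delooping over every circle of every chain object removes all circles; each object becomes a direct sum of grading-shifted empty sets, which together with the classification $\Mor_{\Cobl}(\emptyset,\emptyset)\cong R[T]$ of the previous chapter exhibits the whole complex inside $\Mat$ of free $R[T]$-modules --- the corollary Theorem.

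The only genuine content is the pair of identities in step (ii), and it is exactly there --- once through the boxed (half-handle) decoration and once through the neck cutting relation --- that the hypothesis ``$2$ invertible'' is used; getting the boxed maps chosen so that the sphere evaluations forcing $F\circ G=\mathrm{id}$ and the neck cutting relation forcing $G\circ F=\mathrm{id}_{\bigcirc}$ are simultaneously normalized (here no extra scalars are needed) is the small bookkeeping point to be careful about. Everything after the object isomorphism --- propagation through $\Mat$ and $\Kom$, and preservation of the complex condition --- is formal.
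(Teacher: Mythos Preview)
Your proof is correct and follows essentially the same approach as the paper: write down the delooping morphisms using boxed cups and caps, verify the two composites are identities via the $S$, $T$ and neck-cutting relations, and then extend formally to complexes. You are in fact more explicit than the paper on two points --- the degree bookkeeping, and the observation that one off-diagonal entry of $F\circ G$ is a twice-boxed sphere $=\tfrac14\Sigma_2$, which vanishes by the computation $\Sigma_{2g}=0$ of the previous chapter (the paper's proof lumps this under ``the $S$ and $T$ relations'') --- but these are elaborations, not a different route.
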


\begin{definition}
The isomorphism that allows us to eliminate circles in exchange for
a column of empty sets is called \emph{delooping} and is given by:
\begin{center}
$\eps{30mm}{facql}$ $ \left[ \begin{array}{c}
\emptyset \{-1\}\\
\\
\\
\emptyset\{+1\}
\end{array} \right] $
$\eps{30mm}{facqr}$
\end{center}
\end{definition}

\vspace{3mm}

\begin{proof}(Prop.)\\
We need to check that the compositions of the morphisms above are
the identity (thus making the circle isomorphic to the column of two
empty sets). One direction (from empty sets to empty sets) just
follows from the S and T relations (a boxed sphere equals 1), and
the other direction (from circle to circle) just follows from the NC
relation
$\begin{array}{c}\includegraphics[height=5mm]{figs/CNN.eps}\end{array}
  =\frac{1}{2}\begin{array}{c}\includegraphics[height=5mm]{figs/CNL.eps}\end{array}
  +\frac{1}{2}\begin{array}{c}\includegraphics[height=5mm]{figs/CNR.eps}\end{array}$.\\

This proves that the circle object is isomorphic to a column of two
empty set objects in $\Mat(\Cobl)$. The extension to the category $\Komh\Mat(\Cobl)$ is now straight forward by replacing every appearance of a circle by a column of two empty sets (with the degrees shifted as shown in the picture, maintaining the grading of the theory). The induced maps are the composed maps of the isomorphism above with the original maps of the complex.

\qed
\end{proof}

\vspace{3mm}

\begin{proof}(Thm.)\\
The theorem follows from the proposition. The only objects that are
left in the complex are the empty sets (in various degree shifts).
The induced morphisms in the reduced complex are just the set of
closed surfaces, isomorphic to $\mathbb{Q}[T]$ in the genus-3
presentation, thus making the complex isomorphic to one in the
category of free $\mathbb{Q}[T]$-modules. Thus the complex will look like columns of empty sets (the single object) and maps which are matrices with entries in $\mathbb{Q}[T]$.
\qed
\end{proof}

\vspace{3mm} \emph{Remark:} The morphisms
appearing in the complex associated to a link are actually matrices with $\mathbb{Z}[\frac{1}{2},T]$ entries. Furthermore, we can work with boxed surfaces and re-normalize the 2-handle operator by dividing it by 4, to get that the maps of the complex associated to a link are always $\mathbb{Z}[T]$ matrices. Degree considerations will tell us immediately that the entries are only monomials in $T$.

\vspace{3mm} This theorem extends the simplification reported in
~\cite{ba2,ba4}. There, it was done in order to compute the
``standard'' Khovanov homology, which in the context of the
geometric formalism means imposing one more relation which states
that any surface having genus higher than 1 is set equal to zero
(see also section 9 in ~\cite{ba1}). In our context this means
setting the action of the operator $T$ to zero. When we set $T=0$
the complex reduces to columns of empty sets with maps being integer
matrices.

\section{The algebraic structure underlying the complex
over $\mathbb{Q}$} Now that we simplified the complex significantly (as far as possible objects) it is interesting to see what can we learn about the underlying
algebraic structure of the complex and about the complex maps in terms of the empty sets.
\\

As a first consequence we can see that the circle (the basic object in the complex associated to a link) carries a structure composed of two copies of another basic object (the empty set). It has to be understood that this decomposition is a direct consequence of the 4TU/S/T relations and it is an intrinsic structure of the geometric
complex that reflects these relations. This already suggests that any algebraic structure which respects 4TU/S/T and put on the circle (for instance a Frobenius algebra, to give a TQFT functor) will have to factor into direct sum of two identical copies of a smaller structure shifted by two degrees (two copies of the ground ring of the algebra, for the same example). This shows, perhaps in a slightly more fundamental way, the result given in terms of Frobenius extensions in ~\cite{kho1}.
\\

Let us denote $\emptyset\{-1\}$ by $v_-$ (comparing to ~\cite{ba1}) or X
(comparing to ~\cite{kho1}) and $\emptyset\{+1\}$ by $v_+$ or 1
(comparing accordingly). We also define a re-normalized 2-handle
operator $t$ to be equal the 2-handle operator $T$ divided by 4 (thus $t=\epsg{7mm}{g3}/8$ in the genus-3 presentation). Note that in the
category we are working right now this is just a notation, the
actual objects are the empty sets and we did not add any extra
algebraic structure. We use the tensor product symbol to denote
unions of empty sets and thus keeping track of degrees (remember
that all elements of the theory are graded).
\\

Take the pair of pants map $\eps{7mm}{InvertedPOP}$ between
$\bigcirc\bigcirc$ and $\bigcirc$, and look at the two complexes
$\bigcirc\bigcirc\xrightarrow{pants}\bigcirc$ and
$\bigcirc\xrightarrow{pants}\bigcirc\bigcirc$. Reduce these
complexes using the reduction theorem (i.e. replace the circles with empty
sets columns and replace the complex maps with the induced maps). We
denote by $\Delta_2$ the following composition:

\begin{center}
$ \Delta_2: \left[ \begin{array}{c}
v_-\\
v_+
\end{array} \right]
\xrightarrow{isomorphism} \bigcirc
\xrightarrow{\eps{7mm}{InvertedPOP}} \bigcirc\bigcirc
\xrightarrow{isomorphism} \left[
\begin{array}{c}
v_-\otimes v_-\\
v_-\otimes v_+\\
v_+ \otimes v_-\\
v_+ \otimes v_+
\end{array} \right]
$
\end{center}

We denote by $m_2$ the composition in the reverse direction. By
composing surfaces and using the re-normalized 2-handle operator we get the maps:

\[
  \Delta_2: \begin{cases}
    v_+ \mapsto \left[ \begin{array}{c} v_+\otimes v_- \\ v_-\otimes v_+ \end{array} \right] & \\
    v_- \mapsto \left[ \begin{array}{c} v_-\otimes v_- \\ t v_+\otimes v_+ \end{array} \right] &
  \end{cases}
  \qquad
  m_2: \begin{cases}
    v_+\otimes v_-\mapsto v_- &
    v_+\otimes v_+\mapsto v_+ \\
    v_-\otimes v_+\mapsto v_- &
    v_-\otimes v_-\mapsto tv_+.
  \end{cases}
\]

\vspace{3mm} The maps technically have no linear structure, these are just
$2\times4$ matrices of morphisms in the geometrical category $\Mat(\Cobl)$.
Nonetheless this is exactly the pre-algebraic structure of the generalized Lee TQFT (see section 9 of \cite{ba1}). See ~\cite{lee1} for the non generalized case (where $t=1$) The above TQFT is denoted $\calF_3$ in ~\cite{kho1}. It is important to note that so
far in our category we did not apply any TQFT or any other functor to get this structure, these maps are not multiplication or co-multiplication in an algebra. It is all done intrinsically within our category, and as we will see later it imposes obvious restrictions on the most general type of functors one can apply to the geometric
complex. One can say that the geometric structure over $\mathbb{Q}$ has the underlying structure of this specific \emph{``pre-TQFT''}.

\chapter{Surfaces modulo the 4TU/S/T relations over $\mathbb{Z}$}\label{chap:ClassificationZ}

When 2 is not invertible the neck cutting relation is not equivalent to
the 4TU relation and we need to make sure we use the ``full
version'' of the 4TU relation in reducing surfaces into generators.
Still there is a simpler equivalent version of the 4TU relation
which involves only 3 sites on the surface:
\begin{eqnarray*}
  3S1:\qquad& \eps{7cm}{3S1} \\
  3S2:\qquad& \displaystyle
  \sum_{\parbox{2cm}{\begin{center}\scriptsize\rm
    $0^\circ$, $120^\circ$, $240^\circ$ \newline
    rotations
  \end{center}}}
  \left(\eps{1.2cm}{3S2}-\eps{1.2cm}{3S5}\right) = 0
\end{eqnarray*}

The version that will be most convenient for us is the $3S1$
relation. We will use the 2-handle lemma (which does not depend on
the invertibility of 2), extend the ground ring again, and identify
a complete set of generators.

\begin{definition}
Assume a surface has at least one boundary circle. Choose one
boundary circle. The component that contains the specially chosen
boundary circle is called \emph{the special component}. All other
components will be called \emph{non-special}.
\end{definition}

\begin{definition}
The \emph{special 1-handle operator}, denoted $H$, is the operator
that adds a handle to the special component of a surface.
\end{definition}

Note that $T=H^2$, though $H$ acts ``locally'' (acts \emph{only} on
the special component) and $T$ acts ``globally'' (anywhere on the
surface). We extend our ground ring to $\mathbb{Z}[H]$. The entire theory is still graded with $H$ given degree $-2$.
\\

\begin{remark}
From now on we will assume that all surfaces have at least one
boundary component, thus there is always a special component, and
the action of $H$ is well defined (by choosing a special component).
\end{remark}

\begin{proposition}
Over the extended ring $\mathbb{Z}[H]$ (after a choice of special
boundary circle) the morphisms of $\Cobl$ with one of the
source/target objects non empty (i.e. surfaces modulo 4TU/S/T
relations with at least one boundary circle) are generated freely by
surfaces which are composed of a genus zero special component with
any number of boundary circles on it and zero genus non-special
components with exactly one boundary circle on them. $H$ is the
special 1-handle operator.
\end{proposition}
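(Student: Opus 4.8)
The plan is to mimic the structure of the proof over $\mathbb{Z}[\frac12, T]$ (Proposition in Section ``Classification of surfaces modulo 4TU/S/T over $\mathbb{Z}[\frac12]$''), but replacing the neck-cutting relation, which is no longer available, by the $3S1$ relation together with the $2$-handle lemma. First I would use the $2$-handle lemma (which, as noted in the excerpt, holds over any ground ring) to push all $2$-handles onto the special component and absorb them into the operator $T = H^2$, thereby reducing to surfaces with at most one handle on each connected component. Then I would run a two-stage reduction: stage one kills excess genus and excess boundary circles on non-special components, and stage two sweeps all leftover genus onto the special component, recording it via powers of $H$.

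For stage one, the key local move is the $3S1$ relation applied to a non-special component carrying a handle or carrying more than one boundary circle. I would show, using $3S1$ on a neck separating off part of a non-special component, that a non-special component with a handle equals a sum of surfaces in which the handle has been traded for additional boundary-separating data, and that a non-special component with $\geq 2$ boundary circles can be split (at the cost of extra genus, which is then reabsorbed into $H$ on the special component via the $2$-handle lemma and the $T$-extension). Iterating, every surface becomes a $\mathbb{Z}[H]$-combination of surfaces whose non-special components are genus-zero disks (one boundary circle each) and whose special component has arbitrary genus and arbitrary boundary. Stage two then uses $3S1$ once more on the special component to reduce its genus to $0$ modulo $H$: each handle on the special component is either absorbed into $H$ or generates correction terms already of the allowed generating form. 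The upshot is a reduction map $\Psi_{\mathbb{Z}}$ from arbitrary surfaces to $\mathbb{Z}[H]$-combinations of the claimed generators, built using only the $3S1$ relation (equivalently $4TU$) and the $S$, $T$ relations.

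As in the characteristic-free case, it then remains to prove two things: (i) that $\Psi_{\mathbb{Z}}$ is well defined, i.e. $\Psi_{\mathbb{Z}}(\text{$3S1$ relator}) = 0$, so that the generators are subject to no relations beyond those defining $\mathbb{Z}[H]$; and (ii) that $\Psi_{\mathbb{Z}}(\Sigma) - \Sigma$ lies in the submodule generated by $3S1$, $S$, $T$, so that $\Psi_{\mathbb{Z}}$ descends to the actual morphism group. For (ii) I would exhibit one explicit sequence of $3S1$/$S$/$T$ moves realizing $\Psi_{\mathbb{Z}}$ (taking boundary circles one at a time, as in the $\mathbb{Z}[\frac12,T]$ proof). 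For (i) I would write out the image of the three-term $3S1$ relator under $\Psi_{\mathbb{Z}}$ and check the cancellation; here one has to be careful because, unlike the NC case, the coefficients are genuinely integral and the bookkeeping of which component is ``special'' interacts with where the $3S1$ relator sits. Once (i) and (ii) hold, freeness and the rank count follow exactly as before, and the special component's genus being recorded solely by $H$ gives the stated generating set.

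The main obstacle I anticipate is precisely the interplay between the $3S1$ relation and the choice of special component: the $3S1$ relator can involve sites on the special component, on a non-special component, or straddle both, and one must verify $\Psi_{\mathbb{Z}}(3S1)=0$ uniformly across these cases without the luxury of dividing by $2$. Making the reduction \emph{canonical} enough that this check is a finite, case-free computation — rather than depending on the order in which necks are cut — is the crux; I expect this is why the authors isolate the $2$-handle lemma and the $H$-extension first, so that by the time $3S1$ is invoked the surfaces are already in a normal form where the relator's image is manifestly zero.
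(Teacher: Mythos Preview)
Your overall strategy matches the paper's: build a reduction map $\Psi$ using $3S1$ in place of NC, then verify $\Psi(\Sigma)-\Sigma\in\langle 3S1\rangle$ and $\Psi(3S1)=0$. But there is a concrete mechanism you are missing, and without it your stage one cannot be carried out as written. The $3S1$ relation is a \emph{three}-site relation; the paper's key move is to always place the third (``lower'') site on the \emph{special} component, using it as a probe. With this choice, cutting a non-separating neck on a non-special component transfers a handle to the special component (plus correction terms in which the non-special component has merged into the special one), and cutting a separating neck on a non-special component splits it while adding a handle to the special component (again plus merger terms). Your description --- ``$3S1$ on a neck separating off part of a non-special component'' --- reads as if all three sites live on the non-special piece, which does not give the needed reduction over $\mathbb{Z}$.

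Two smaller points. First, your stage two is spurious: once all non-special components are genus-zero disks, the genus sitting on the special component \emph{is} the power of $H$ by definition of the extension $\Sigma_g(S,\ldots)=H^g\Sigma_0(S,\ldots)$; no further $3S1$ move is needed there. Second, for well-definedness the paper does not do a case analysis over where the relator sits. It writes down explicit closed formulas for $\Psi$ (one for special-component genus, one for non-special genus, one for non-special multi-boundary splitting) and then checks $\Psi(4TU)=0$ by a single pictorial computation: apply $3S1$ (with probe on the special component) to each of the four $4TU$ terms and watch the twelve resulting pieces cancel in pairs. Alternatively, freeness follows immediately by applying any $4TU$-respecting TQFT (e.g.\ $\mathbb{Q}[X]/X^2$) and observing it separates the proposed generators. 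Either route avoids the ``order of neck-cutting'' bookkeeping you were worried about.
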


An example for a generator would be $\eps{20mm}{gen3}$ (the special
circle is at the bottom left).
\\

Another example would be the ``Shrek surface'' :
$\eps{20mm}{shrek}$. This surface has a special boundary circle
marked with the number zero (Shrek's neck), 6 other boundary circles
(number 2 and 5 belong to the special component -- the head) and 3
handles on the special component. Over the extended ring, this
surface is generated by the ``Shrek shadow'' $\eps{20mm}{shrekh}$
and equals to $H^3\eps{20mm}{shrekh}$.
\\

\begin{remark} \label{remark:Hlinear}
In order for the composition of surfaces (morphisms) to be
$H$-linear, and thus respect the $\mathbb{Z}[H]$-module structure,
one needs to restrict to a category where all the morphisms preserve
the special component, i.e. we always have the special boundary
circles of the two composed morphisms in the same component. As we
will see in the next chapter, this happens naturally in the context of
link homology.
\end{remark}

The rest of this chapter is devoted to the somewhat long proof of the proposition
above.
\\

\begin{proof}

We start with a couple of toy models. If all the surface components
involved in the $3S2$ (or $3S1$) relation have one boundary circle in total, then the $3S2$ (or $3S1$) relation is equivalent to NC relation, and further more up to the 2-handle lemma it is trivially satisfied. This makes the classification of surfaces with only one boundary component simple. Extend the ground ring to $\mathbb{Z}[T]$ and the generators will be $\eps{0.5cm}{vm}$ and $\eps{0.5cm}{vp}$. We can now go one step further and use the ``local'' 1-handle operator $H$. Over $\mathbb{Z}[H]$ there is only
one generator $\eps{0.5cm}{vp}$.
\\

If the surface components involved have only two boundary circles in total, then the $3S2$ (or $3S1$) relation is again equivalent to the NC relation. We can classify now all surfaces with 2 boundary circles modulo the 4TU relation over $\mathbb{Z}$. Let us extend the ground ring to $\mathbb{Z}[T]$, thus reducing to components with
genus 1 at most. Then, we choose one of the boundary circles (call
it $\alpha$), and use the NC relation to move a handle from the
component containing the other boundary circle (call it $\beta$) to
the component containing $\alpha$ (i.e.
$\Sigma_1(\beta)\Sigma_g(\alpha) = 2\Sigma_g(\beta,\alpha) -
\Sigma_0(\beta)\Sigma_{g+1}(\alpha)$). This leaves us with the
following generators over $\mathbb{Z}[T]$: $\eps{1cm}{MM12P2}$,
$\eps{5mm}{vp}\eps{5mm}{vp}$, $\eps{10mm}{fatman2}$ and
$\eps{1cm}{MM12P3}$ ($\alpha$ is drawn on the right). This asymmetry
in the way the generating set looks like is caused by the symmetry
braking in the way the NC relation is applied (we chose the special
circle $\alpha$, on the right). On the other hand it allows us again
to replace the ``global'' 2-handle operator $T$, with the ``local''
1-handle operator $H$, which operates only on the component
containing $\alpha$. Thus, over $\mathbb{Z}[H]$ we only have the
following generators: $\eps{5mm}{vp}\eps{5mm}{vp}$ and
$\eps{1cm}{MM12P3}$. This set is symmetric again, and the asymmetry
hides within the definition of $H$.
\\

The place where the difference between the NC and $3S2$ (or $3S1$)
relations comes into play is when three different components with
boundary on each are involved. We would use the $3S1$ relation as a ``neck cutting'' relation for the neck in the upper part of $\eps{10mm}{3S2}$ (by replacing it with the other 3 elements in the $3S1$ equation). The way the relation is applied is not
symmetric (not even visually), while the upper 2 sites are interchangeable the lower site plays a special role. This hints that this scheme of classification would be easier when choosing a special component of the surface. This is done by choosing a special
boundary circle as in definition 4.1.

\textbf{Cutting non separating necks over $\mathbb{Z}$.} Assume
first that we only use $\eps{10mm}{3S2}$ in the $3S1$ relation to
cut a non-separating neck (i.e. the upper two sites remain in the same connected component after the cut), and we take the lower site to be on the special component. Now, whenever we have a handle in a non-special component (dashed line on the left component in the picture below) we can move it using the $3S1$ relation to the
special component (the component on the right) at the price of
adding surfaces in which the non-special components become special: $\eps{20mm}{handlecutz1} = \eps{20mm}{handlecutz2} + \eps{20mm}{handlecutz3} -
\eps{20mm}{handlecutz4}$. After this reduction we are left with surfaces involving handles only on the special component. The rest of the components are of genus zero (with any amount of boundary circles on them). For example $\eps{20mm}{gen1}$, where the special circle is at the bottom left.
\\

\textbf{Cutting separating necks over $\mathbb{Z}$.} After the stage described above we use the $3S1$ relation to cut separating necks (the two upper sites in $\eps{10mm}{3S2}$ will be on two disconnected components after the cut). We will use the relation with the upper two sites on a non-special component and the lower site on the special component.
The result is separating the non-special component into two at the price of adding a handle to the special component as well as adding surfaces in which the non-special component become special (the other summands of the $3S1$ relation). Thus, whenever we have a non-special component that has more than one boundary circle, we can reduce it to a sum of components that are either special or have exactly one boundary circle. An
example of a summand in a reduced surface is $\eps{20mm}{gen2}$, where the special circle is at the bottom left.
\\

\textbf{Ground ring extension and surface reduction formula over
$\mathbb{Z}$.} We now extend our ground ring to $\mathbb{Z}[H]$. We will
put all the above discussion into one formula for reducing any
surface into a combination of generators (considered as a map $\Psi$). Let $\alpha$
denote a family of boundary circles $\{\alpha_1,\ldots,\alpha_n\}$ and let $S$ denote the special boundary circle. We will use the notation $\beta\in 2^\alpha$ for any subset $\{\alpha_{i_1},\ldots,\alpha_{i_k}\}$ of $\alpha$, and denote by $\{\alpha_{i_{k+1}},\ldots,\alpha_{i_n}\}$ its complement in $\alpha$. $odd(g)$ is the function that equals $1$ if $g$ is odd and $0$ otherwise.
\\

\emph{The Surface Reduction Formula Over $\mathbb{Z}$}:\\

\emph{Special component reduction:}
\begin{equation}
\Sigma_g(S,\ldots)=H^g\cdot\Sigma_0(S,\ldots)
\end{equation}

\emph{Non-special component genus reduction ($g\geq1$):}
\begin{equation}\label{formula:first}
\Sigma_0(S,\ldots)\Sigma_g(\alpha) = 2\cdot odd(g)\cdot H^{g-1}
\Sigma_0(S,\ldots,\alpha) + (-1)^g H^g
\Sigma_0(S,\ldots)\Sigma_0(\alpha)
\end{equation}

\emph{Non-special component neck reduction ($n\geq2$):}
\begin{equation} \label{formula:last}
\Sigma_0(S,\ldots)\Sigma_0(\alpha) =
\sum_{\beta\in2^\alpha,\beta\neq\alpha}{(-1)^{n-k-1}H^{n-k-1}
\Sigma(S,\ldots,\beta) \Sigma_0(\alpha_{i_{k+1}}) \cdots
\Sigma_0(\alpha_{i_n})}
\end{equation}

\vspace{3mm} Formula \ref{formula:last} is plugged into the right hand side of formula \ref{formula:first} for full reduction. The above formulas extend naturally to unions of surfaces (just iterate the use of the formula). Thus we get a reduction of any surface into generators. The previous discussion shows that this formula uses the $3S1$ relation only (i.e. $\Psi(\Sigma)-\Sigma=\sum{3S1}$). This can also be proven directly by induction.
\\

The proof is finalized by taking the formula and observing that it is a map from surfaces to combinations of generators (by definition), it uses only the 4TU ($3S1$) relations (proved by the discussion above) and these generators are free (i.e. the map is well defined). The last part can be proven by a direct computation (applying the formula to all sides of a 4TU relation to get zero) or by applying a TQFT to the generators. By choosing a TQFT that respects the 4TU relations (like the standard Khovanov TQFT $\mathbb{Q}[X]/X^2$) one can show that it separates these surfaces (i.e. sends them to independent module maps). Let us mention that the reason for $\Psi(4TU)=0$ is coming from the fact that iterative application of the $3S1$ relation to a 4TU relation is zero as shown by the picture below (the special component is on top):

$\eps{10mm}{welldef1} + \eps{10mm}{welldef2} - \eps{14mm}{welldef3}
- \eps{14mm}{welldef4} \mapsto \{ \eps{10mm}{welldef5} +
\eps{10mm}{welldef6} - \eps{10mm}{welldef7} \} + \{
\eps{10mm}{welldef8} + \eps{10mm}{welldef9} - \eps{10mm}{welldef7}
\} - \{ \eps{10mm}{welldef5} + \eps{10mm}{welldef8} -
\eps{10mm}{welldef7} \} - \{ \eps{10mm}{welldef9} +
\eps{10mm}{welldef6} - \eps{10mm}{welldef7} \} = 0 $

\qed
\end{proof} 
\chapter{The structure of the geometric complex over $\mathbb{Z}$}\label{chap:ComplexReductionZ}

\section{Notations and a technical comment}

We would like to reduce the complex over $\mathbb{Z}$ as we did over $\mathbb{Q}$. Recall that whenever working with surfaces over $\mathbb{Z}$ we need to pick a special boundary circle first. We do that by marking the link at one point. After doing so, we have a special circle at every appearance of an object of $\Cobl$ in the complex (the one containing the mark). More over, every morphism in the complex connects special circle to special circle (just follow the construction carefully) and thus satisfies the technical requirement of remark \ref{remark:Hlinear}. For the sake of clarity we will take an equivalent approach and work in the category of 1-1 tangles (also called ``long knots''). One gets 1-1 tangles by cutting the link open at the point chosen. This is actually a canonical choice when working with knots and an irrelevant choice when working with links. We refer any further discussion on this technical issue to section \ref{section:commentmarkingpoint}. Notice that when working with 1-1 tangles, every appearance of the special circle will turn into an appearance of a \textbf{special line}. Morphisms between special lines will look like $\epsg{10mm}{curtainexample}$ and will be called \textbf{curtains}. The special 1-handle operator $H$ still operates by adding a handle to the special component (this time a curtain with a handle added, $\epsg{10mm}{curtain3}$). We will also use the following notation: a \textbf{dot} on a component means that a neck is connected between the dot and the special component (the curtain). That is, $\eps{10mm}{dotexample2} = \eps{10mm}{dotexample1}$.

\section{Reduction theorem over $\mathbb{Z}$}

\begin{theorem}
The complex associated to a link, over $\mathbb{Z}$, is equivalent
to a complex built from the category of free
$\mathbb{Z}[H]$-modules.
\end{theorem}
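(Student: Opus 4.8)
The plan is to mirror the proof of the reduction theorem over $\mathbb{Q}$ (Chapter \ref{chap:ComplexReductionQ}) but using the $\mathbb{Z}$-classification of Chapter \ref{chap:ClassificationZ} in place of the $\mathbb{Q}$-classification. The theorem will again be a corollary of a delooping proposition: \emph{in $\Mat(\Cobl)$, after marking the link and cutting it open to a $1$--$1$ tangle, every circle object is isomorphic to a column of two empty-set objects (with grading shifts $\{-1\}$ and $\{+1\}$), and this isomorphism extends to an isomorphism of complexes in $\Komh\Mat(\Cobl)$.} The subtlety over $\mathbb{Z}$ is that we no longer have the neck-cutting relation with a factor $\frac12$ available, so the delooping maps cannot be the same as in the $\mathbb{Q}$ case — we must engineer an isomorphism whose ``circle $\to$ circle'' composite is the identity using only integral relations, exploiting the special line / curtain structure.

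First I would set up the category: after choosing a marked point on the link and passing to the $1$--$1$ tangle picture, every appearance of an object of $\Cobl$ contains a special line, and every morphism is a curtain (preserves the special component), so Remark \ref{remark:Hlinear} applies and the closed-surface ring acts $H$-linearly. Then I would exhibit explicit delooping morphisms between a circle (together with the ambient special line) and the column $[\emptyset\{-1\}, \emptyset\{+1\}]^{T}$. The natural candidates are: from the circle, the two maps given by (a) a cap, and (b) a cap with a ``dot'' (a neck connecting the circle to the curtain, i.e. the notation $\eps{10mm}{dotexample2}$); and in the reverse direction, the corresponding cups, one of them decorated by a dot, arranged so that the grading shifts match (the dotted cap/cup carries the degree of a handle-half). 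The ``empty $\to$ empty'' composites reduce to closed components evaluated by the $S$ and $T$ relations (a sphere or a ``dotted'' sphere gives $0$ or $1$ appropriately), exactly as over $\mathbb{Q}$. The content is to check the ``circle $\to$ circle'' composite: this produces a sum of a cylinder (the identity cobordism on the circle) plus a term with a neck connecting the circle to itself, i.e. a term of the form $\eps{10mm}{dotexample1}$-type decoration, and one must verify using the $3S1$ relation (the integral replacement for neck cutting available in the presence of a third site — here the special curtain) that this sum equals the identity cylinder. This is where having the special component is essential: the $3S1$ relation applied with its distinguished lower site on the curtain is precisely the integral tool that lets us cut the neck back open.

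Once the proposition is in hand, the theorem follows formally as in the $\mathbb{Q}$ case: replace every circle object throughout the complex by the column of two empty sets with the prescribed shifts, and replace every complex map by its conjugate under these isomorphisms. The only objects surviving are empty sets (equivalently, single special lines, in various degree shifts), so the complex becomes a complex of columns over a single object; the induced morphisms are $\mathbb{Z}[H]$-linear combinations of closed curtain-surfaces, and by the classification of Chapter \ref{chap:ClassificationZ} the ring of such endomorphisms is freely generated over $\mathbb{Z}[H]$. Hence the complex is isomorphic to one built from free $\mathbb{Z}[H]$-modules. As in the remark after the $\mathbb{Q}$ theorem, a degree count forces the matrix entries to be monomials in $H$, so in fact the complex is a complex of free $\mathbb{Z}[H]$-modules with monomial transition matrices.

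The main obstacle I anticipate is precisely the verification that the circle-to-circle composite of the delooping maps is the identity. Over $\mathbb{Q}$ this was a one-line application of $NC$ with its $\frac12$'s; over $\mathbb{Z}$ one has to find the right decorated cap/cup pair so that the relevant integral relation ($3S1$, with its asymmetric distinguished site placed on the special curtain) closes up correctly, and one must be careful that the grading shifts $\{\pm1\}$ are compatible with the half-handle degree of the dotted component. A secondary bookkeeping point is making sure the induced differentials genuinely land among curtain-preserving morphisms so that $H$-linearity and the Chapter \ref{chap:ClassificationZ} classification apply verbatim; this should follow by inspecting the construction of $\|K\|$ for the $1$--$1$ tangle, but it needs to be stated carefully.
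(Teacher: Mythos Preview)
Your approach is exactly the paper's: prove a delooping proposition over $\mathbb{Z}$ using the special-line/curtain setup and the $3S1$ relation in place of $NC$, then deduce the theorem by replacing every non-special circle with a column of two empty sets, leaving only columns of special lines with curtain morphisms forming a $\mathbb{Z}[H]$ matrix.

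One concrete correction to your candidate maps: the naive choice (cap, dotted cap) paired with (cup, dotted cup) is not an isomorphism over $\mathbb{Z}$. The off-diagonal empty-to-empty composite ``dotted cup then dotted cap'' is a sphere with two necks to the curtain, which equals the curtain with a handle, i.e.\ $H$, not $0$. The paper fixes this by inserting a $-H$ correction into one of the four entries (so one arrow is ``dotted cap minus $H\cdot(\text{plain cap})$'', say). With that correction the empty-to-empty composite becomes the $2\times 2$ identity using only the $S$ relation, and the circle-to-circle composite is a sum of \emph{three} surfaces (not ``a cylinder plus one term'' as you wrote) which the $3S1$ relation, applied with its distinguished lower site on the curtain, collapses to the identity cylinder. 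You were right to flag this verification as the crux; the rest of your outline matches the paper verbatim, including the final remark that degree considerations force the matrix entries to be monomials in $H$.
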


The theorem is a corollary of the following proposition, which we
will prove first :

\begin{proposition}
In $\Mat(\Cobl)$ a non-special circle is isomorphic to a column of
two empty set objects. This isomorphism extends (taking degrees into
account) to an isomorphism of complexes in $\Komh\Mat(\Cobl)$ where
the non-special circle objects are replaced by empty sets together
with the appropriate induced complex maps.
\end{proposition}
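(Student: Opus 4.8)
The plan is to reproduce the delooping argument of Chapter~\ref{chap:ComplexReductionQ} almost verbatim, with the one relation it used --- neck cutting, which is not available integrally --- replaced by its three-site refinement $3S1$, the extra (spectator) site always being placed on the special curtain. First I would propose the candidate isomorphism
\[
F=\begin{pmatrix}\text{cap}\\ \text{dotted cap}-H\cdot\text{cap}\end{pmatrix},\qquad
G=\begin{pmatrix}\text{dotted cup} & \text{cup}\end{pmatrix},
\]
where $\bigcirc$ is the non-special circle, $F\colon\bigcirc\to\emptyset\{-1\}\oplus\emptyset\{+1\}$, $G$ is the map back, and, following the Notations section, a ``dot'' on a capping or cupping disk abbreviates a thin neck joining that disk to the ever-present special component (the curtain). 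A short Euler-characteristic count shows that every matrix entry is homogeneous of degree $0$ with these shifts (a dotted disk has degree $-1$, and since $\deg H=-2$ the term $H\cdot\text{cap}$ also has degree $-1$). The asymmetric correction $-H\cdot\text{cap}$ is the one genuinely new feature of the integral case: it records that a ``double dot'' here is a handle on the curtain, i.e.\ a factor of $H$, rather than $0$ --- equivalently it reflects the relation $X^2=HX$ of the universal Frobenius algebra displayed in the introduction.

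The second, and main, step is to check that $G\circ F=\operatorname{id}_{\bigcirc}$ and $F\circ G=\operatorname{id}$. The composition $F\circ G$ is a $2\times2$ matrix of closed-up cobordisms, and I would evaluate each entry using only the $S$ and $T$ relations together with two elementary facts: a sphere joined to another component by a single tube can be absorbed into that component (leaving the curtain, i.e.\ the empty-set identity), and a sphere joined to a component by two tubes contributes exactly one handle, i.e.\ a factor of $H$. Then the $(1,2)$ entry is $\text{cap}\circ\text{cup}=\text{sphere}=0$ by $S$, the diagonal entries both reduce to the empty-set identity, and the $(2,1)$ entry vanishes because its two $H$-contributions (one from the ``dotted$\,\circ\,$dotted'' term, one from the correction term) cancel. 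For $G\circ F$ one obtains a single entry equal to
\[
[\text{cap--cup, neck on the cup}]+[\text{cap--cup, neck on the cap}]-H\cdot[\text{plain cap--cup}],
\]
and the claim is that this equals the cylinder $\operatorname{id}_{\bigcirc}$. That identity is precisely the $3S1$ relation applied to the neck of the cylinder with the third site on the special curtain --- equivalently, the one-non-special-circle, genus-one instance of the Non-special component genus reduction formula~(\ref{formula:first}) of Chapter~\ref{chap:ClassificationZ}. I expect the real work to be here: fixing the sign and the exact $H$-coefficient so that both compositions are honest identities rather than merely invertible triangular matrices, and confirming that the ``dot $=$ neck to the curtain'' cobordisms are genuine morphisms in the ambient category --- which they are, since after cutting the link open the special component is a line running through every object and morphism of the complex (cf.\ Remark~\ref{remark:Hlinear}).

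Given the object-level isomorphism, the extension to $\Komh\Mat(\Cobl)$ is formal and identical to the rational case: because $F$ and $G$ are mutually inverse degree-$0$ morphisms, I would replace each occurrence of a non-special circle inside a chain object by the column $\emptyset\{-1\}\oplus\emptyset\{+1\}$ and each differential entry $\varphi$ incident to that circle by $F\circ\varphi$, $\varphi\circ G$, or $F\circ\varphi\circ G$ as appropriate, obtaining an isomorphic complex; iterating over all non-special circles of all chain objects leaves a complex all of whose objects are direct sums of grading-shifted empty sets, which is the assertion of the proposition. (The Reduction Theorem then follows at once: by the classification of Chapter~\ref{chap:ClassificationZ} the morphism groups between these empty-set objects are free $\mathbb{Z}[H]$-modules, so the reduced complex lives in the category of free $\mathbb{Z}[H]$-modules, with differentials that degree considerations force to be matrices of monomials in $H$.)
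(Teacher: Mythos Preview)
Your proposal is correct and follows essentially the same route as the paper. The paper's delooping diagram (Definition~\ref{def:delooping}) records precisely the pair $(F,G)$ you wrote down (up to the immaterial choice of placing the $-H$ correction on $F$ rather than on $G$), and its proof checks the two compositions exactly as you do: $G\circ F=\operatorname{id}_{|\,\bigcirc}$ is a single instance of the $3S1$ relation with the third site on the curtain, while $F\circ G=\operatorname{id}$ uses only the $S$ relation together with the topological facts that a sphere tubed once to the curtain is absorbed and a sphere tubed twice contributes one handle. Your added remark tying $G\circ F$ to the $g=1$ case of formula~(\ref{formula:first}) is a nice cross-check, though the paper simply invokes $3S1$ directly.
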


\begin{definition}\label{def:delooping}
The isomorphism from the proposition is called \emph{delooping} and
is given by the following diagram :

\begin{center}
$\eps{27mm}{linecirc}$ $\eps{17mm}{faczl2b}$ $ \left[
\begin{array}{c}
 |~\emptyset \{ -1 \}\\
\\
\\
|~\emptyset\{+1\}
\end{array} \right] $
$\eps{17mm}{faczra}$ $\eps{27mm}{linecirc}$
\end{center}

This diagram shows how to make a non-special circle disappear into a
column of two empty sets in the presence of the special line. The
special line (remember we now work with 1-1 tangles for the sake of clarity)
functions as a ``probe'' which in his presence all the other circles
can be isomorphed into empty sets, leaving us with nothing but the
special line. The diagram shows the objects and morphisms (above and below the arrows). These contain \emph{curtains} between two special lines, drawn as $\epsg{10mm}{curtainexample}$. Also, remember that a \emph{dot} means a neck connecting to the curtain and that the special 1-handle operator $H$ will add a handle to the curtain (as described in the above notation section).
\\

\end{definition}

\begin{proof}(Prop.)\\
First we prove that the delooping process is indeed an isomorphism of objects.
We trace the arrows and use the relations of $\Cobl$. When composing the above diagram
from $|~\bigcirc$ to $|~\bigcirc$ we get: $\eps{10mm}{iso1a} - \eps{10mm}{iso3a} + \eps{10mm}{iso2a}$ (remember that $H$ adds a handle to the special component). Modulo the $3S1$ relation this equals to $\eps{10mm}{iso4a}$ which is the identity cobordism. The other direction uses trivially the S relation. The extension to the full category is well defined and the second part of the proposition naturally follows, keeping track of degrees. One replaces the appearances of non-special circles by columns of empty sets and compose the above isomorphism with the original complex maps to get the induced maps.
\qed
\end{proof}

\vspace{3mm}

\begin{proof}(Thm.)\\
Given a link (1-1 tangle actually) we use the proposition to reduce its complex and replace all the non-special circles with columns of empty sets. The only thing left
in the complex are columns with the special line in its entries. The morphism set of that object consists of curtains with any genus which is isomorphic to $\mathbb{Z}[H]$, where $H$ is the special 1-handle operator. Thus we get a complex made of columns of free $\mathbb{Z}[H]$-modules and maps which are matrices with $\mathbb{Z}[H]$ entries.
\qed
\end{proof}

\begin{remark} Though surfaces with two boundary components in $\Cobl$ have two free generators over $\mathbb{Z}[H]$ only the connected generator (a curtain) appears in the complex associated to a link (1-1 tangle), therefore the morphism group above can be reduced from $\mathbb{Z}[H] \bigoplus \mathbb{Z}[H]$ to $\mathbb{Z}[H]$. As in the case over $\mathbb{Q}$ the appearance of $H$ comes only in homogeneous form, i.e. monomials entries, due to grading considerations.
\end{remark}

\section{The algebraic structure underlying the complex
over $\mathbb{Z}$.} We follow the same trail as the reduction over
$\mathbb{Q}$ to get some information on the underlying structure of
the geometric complex over $\mathbb{Z}$.
\\

The non-special circle, a basic object in the theory, decomposes into two copies of another fundamental object (the empty set) with relative degree shift 2. This restricts functors from the geometric category to any other category which might carry an algebraic structure of direct sums (TQFT for example, into the category of
$\mathbb{Z}$-modules). The special line (special circle) stays as is, but as we will see below it actually carries an intrinsic ``one dimensional'' object. In chapter \ref{chap:TQFT} we will also see that the special line can be \emph{promoted} to carry higher dimensional algebraic objects.
\\

Denote $|~\{-1\}$ by $v_-$ (comparing to ~\cite{ba1}) or X
(comparing to ~\cite{kho1}) and $|~\{+1\}$ by $v_+$ or 1 (comparing
accordingly). We use the tensor product symbol to denote unions of
such empty sets (always with one special line only), thus keeping
track of degree shifts.
\\

Now, we take the pair of pants map $\eps{7mm}{InvertedPOP}$ between
$|~\bigcirc\bigcirc$ and $|~\bigcirc$. By this we mean the complexes
$|~\bigcirc\bigcirc\xrightarrow{pants}|~\bigcirc$ and
$|~\bigcirc\xrightarrow{pants}|~\bigcirc\bigcirc$ where the special
lines are connected with a curtain. We reduce these complexes into
empty sets complexes (replacing the non-special circles with empty
sets columns and replacing the maps with the induced maps, as in the
reduction theorem). We denote by $\Delta_1$ the composition:

\begin{center}
$ \Delta_1: \left[ \begin{array}{c}
v_-\\
v_+
\end{array} \right]
\xrightarrow{isomorphism} |~\bigcirc
\xrightarrow{\eps{7mm}{InvertedPOP}} |~\bigcirc\bigcirc
\xrightarrow{isomorphism} \left[
\begin{array}{c}
v_-\otimes v_-\\
v_-\otimes v_+\\
v_+ \otimes v_-\\
v_+ \otimes v_+
\end{array} \right]
$
\end{center}

We denote by $m_1$ the composition in the reverse direction. By
composing surfaces and using the 4TU/S/T relations the maps we get
are:

\[
  \Delta_1: \begin{cases}
    v_+ \mapsto  \left[ \begin{array}{c} v_-\otimes v_+ \\ v_+\otimes v_- \\ - H v_+\otimes v_+  \end{array} \right] &
    \\
    v_- \mapsto v_-\otimes v_- &
  \end{cases}
  \qquad
  m_1: \begin{cases}
    v_+\otimes v_-\mapsto v_- &
    v_+\otimes v_+\mapsto v_+ \\
    v_-\otimes v_+\mapsto v_- &
    v_-\otimes v_-\mapsto Hv_-.
  \end{cases}
\]

Let us check what are the induced maps when the pair of pants involves the special line. These ``torn'' pair of pants looks like $\eps{7mm}{dotexample1}$ and will be denoted $\eps{7mm}{popline}$. Denote by $\Phi$ the following composition :

\begin{center}
$ \Phi: \left[ \begin{array}{c}
v_-\\
v_+
\end{array} \right]
\xrightarrow{isomorphism} |~\bigcirc
\xrightarrow{\eps{7mm}{popline}} | $
\end{center}

Denote by $\Psi$ the composition in the reverse direction. Composing
and reducing surfaces we get the following maps :

\[
  \Phi: \begin{cases}
    v_- \mapsto H \\
    v_+ \mapsto 1
  \end{cases}
  \qquad
  \Psi: \begin{cases}
    \emptyset \mapsto v_-
  \end{cases}
\]

\vspace{3mm} ($\Delta_1,m_1$) is the type of algebraic structure one
encounters in a TQFT (co-product and product of the Frobenius
algebra). The appearance of a special line (special circle) is most
natural in the construction of the \emph{reduced knot homology},
introduced in ~\cite{kho3}. In the setting of Khovanov homology
theory, first, one views the entire chain complex as a complex of
$\mathbb{A}$-modules ($\mathbb{A}$ is the Frobenius algebra
underlying the TQFT used) through a natural action of $\mathbb{A}$
(here one has to mark the knot and encounter the special line).
Then, one can take the kernel complex of multiplication by X to be
the reduced complex. One can also take the \emph{co-reduced
complex}, which is the image complex of multiplication by X. The
above ($\Delta_1$,$m_1$) structure is exactly the TQFT denoted
$\calF_7$ in ~\cite{kho1}. The $mod~2$ specialization of this theory
appeared in section 9 in ~\cite{ba1}. The additional structure
coming from $\Psi$ and $\Phi$ is exactly the structure of the
\emph{co-reduced} theory of the ($\Delta_1,m_1$) TQFT, where the
special line carries the ``one dimensional'' object generated by X.
It is important to note again that this is an \emph{intrinsic}
information on the underlying structure, coming before any TQFT is
even applied. This gives the above ``pre-TQFT co-reduced structure''
a unique place in the universal theory.

\chapter{TQFTs and link homology theories put on the complex}\label{chap:TQFT}

The complex invariant is geometric in nature and one cannot form
homology groups directly (kernels make no sense since the category
is additive but not Abelian). Thus we need to apply a functor into
an algebraic category where one can form homology groups. We will
classify such functors and present the universal link homology
theory. We will also explore the relative strength and the
information held within these functors. One of these types of
functors is a TQFT which maps the category $Cob$ into the category
of modules over some ring. In the two dimensional case, which is the
relevant case in link homology, such TQFT structures are equivalent
to Frobenius algebras and are classified by them ~\cite{ab} \cite{kho1}. A
priori, the link homology theory coming from a TQFT does not have to
satisfy the 4TU/S/T relations of the complex. It turns out that
any TQFT (up to twisting and base change) used to create link
homology can be put on the geometric complex, making the complex
universal for link homology coming from TQFTs. We show that the
universal link homology functor is actually a TQFT and every other
link homology functor factorizes through it (including non TQFTs). We also construct a unified mechanism of extracting all link homology theories from the universal complex, forming a convenient ``recipe'' of link homology theories.

\section{Two important TQFTs}
Given the structures $(\Delta_1, m_1)$ and
$(\Delta_2,m_2)$, introduced in the previous chapters, one can add
linearity and get the following algebraic structures (using the same
notations):

\[
  \Delta_1: \begin{cases}
    v_+ \mapsto v_+\otimes v_- + v_-\otimes v_+  - H v_+\otimes v_+ &\\
    v_- \mapsto v_-\otimes v_- &
  \end{cases}
  \]
  \[
  m_1: \begin{cases}
    v_+\otimes v_-\mapsto v_- &
    v_+\otimes v_+\mapsto v_+ \\
    v_-\otimes v_+\mapsto v_- &
    v_-\otimes v_-\mapsto Hv_-
  \end{cases}
\]

\[
  \Delta_2: \begin{cases}
    v_+ \mapsto v_+\otimes v_- + v_-\otimes v_+ &\\
    v_- \mapsto v_-\otimes v_- + T v_+\otimes v_+ &
  \end{cases}
  m_2: \begin{cases}
    v_+\otimes v_-\mapsto v_- &
    v_+\otimes v_+\mapsto v_+ \\
    v_-\otimes v_+\mapsto v_- &
    v_-\otimes v_-\mapsto Tv_+
  \end{cases}
\]

Then, one can construct TQFTs (Frobenius algebras) based on these algebraic structures. The first will be denoted $\calF_H$ and is given by the algebra $A_H=R_H[X]/(X^2-HX)$ over $R_H=R[H]$ (algebra $\calF_7$ in ~\cite{kho1}). The second algebra is $A_T=R_T[X]/(X^2-T)$ over the ring $R_T=R[T]$ which will be denoted $\calF_T$ (named $\calF_3$ in ~\cite{kho1}). Of course one can take the reduced or co-reduced homology structures, and we denote it by superscripts ($\calF^{co}_H$ for example). Due to the fact that these are the underlying algebraic structures of the geometric complex it is not surprising that they dominate the information in link homology theory.

\section{The universal link homology theories}

We give the most general link homology theory that can be applied to
the geometric complex.

\begin{theorem}
Every functor used to create link homology theory from the geometric
complex over $\mathbb{Q}$, or any ring with 2 invertible, factors
through $\calF_T$. I.e. the TQFT $\calF_T$ is the universal link
homology theory when 2 is invertible and holds the maximum amount of
information.
\end{theorem}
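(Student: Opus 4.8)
The plan is to leverage the reduction theorem over $\mathbb{Q}$ from Chapter~\ref{chap:ComplexReductionQ}, which already tells us that the geometric complex associated to a link is isomorphic in $\Komh\Mat(\Cobl)$ to a complex of free $\mathbb{Q}[T]$-modules (columns of empty sets, with monomial-in-$T$ matrix entries). The point is that this reduced complex, together with the intrinsic ``pre-TQFT'' structure $(\Delta_2,m_2)$ identified at the end of that chapter, \emph{is} precisely the structure carried by $\calF_T$: applying the Frobenius algebra $A_T=R_T[X]/(X^2-T)$ to the geometric complex reproduces exactly this reduced complex. So the first step is to make that identification precise --- i.e.\ observe that $\calF_T$ applied to $\Kh(L)$ is (degree-$0$ homotopy) equivalent to the reduced complex of Chapter~\ref{chap:ComplexReductionQ}, because $\calF_T$ is exactly the tautological/TQFT realization of the delooping isomorphism (the circle goes to $V=A_T$, two copies of the ground ring shifted by $\pm1$, and the pair-of-pants maps go to $m_2,\Delta_2$).

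Second, I would set up the universality statement. Let $\calF\colon\Cobl\to\calA$ be any functor into an abelian category used to build link homology (a priori not even a TQFT). Since $\calF$ is defined on $\Cobl$, it kills $S$, $T$, and $4TU$, hence kills the $NC$ relation; in particular $\calF$ respects delooping. Therefore $\calF$ applied to $\Kh(L)$ only ever sees: (i) the images under $\calF$ of the empty-set objects (with their grading shifts), which assemble into some fixed object $W:=\calF(\emptyset)$ of $\calA$, and (ii) the images of the reduced complex maps, which are built entirely from the closed-surface ring $\mathbb{Q}[T]$ acting via monomials in $T$. Concretely, $\calF$ factors as: reduce the complex (an isomorphism in $\Komh\Mat(\Cobl)$, so it commutes with any functor), then apply $\calF$ to the reduced complex, which now is nothing but ``tensor the reduced $\mathbb{Q}[T]$-module complex with $W$ over $\mathbb{Q}[T]$'', where $\mathbb{Q}[T]$ acts on $W$ through $\calF(T)\in\End_\calA(W)$. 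This exhibits $\calF\Kh(L)$ as obtained from $\calF_T\Kh(L)$ (which is the universal case $W=\mathbb{Q}[T]$, $T\mapsto T$) by the base-change $-\otimes_{\mathbb{Q}[T]}W$ along $\mathbb{Q}[T]\to\End(W)$. Taking homology, $H(\calF\Kh(L))$ is functorially determined by $H(\calF_T\Kh(L))$ together with the $\mathbb{Q}[T]$-module data of $W$; in particular every link homology theory factors through $\calF_T$, so $\calF_T$ carries the maximal amount of information.

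The third step is to pin down the phrase ``up to twisting and base change'' that appears in the chapter introduction: a general TQFT $\calF$ with $\calF(\bigcirc)=V$ a rank-$2$ Frobenius algebra over a ring $R$ need not literally have $V\cong R\oplus R\{2\}$ on the nose, but the decomposition forced by $4TU/S/T$ (the circle $=$ column of two empty sets) shows that after an invertible change of basis and a possible rescaling of the comultiplication (a ``twist''), any such $V$ is a specialization of $A_T$ via a ring map $R[T]\to R$ determined by where the handle operator goes. I would make this explicit by computing, in any rank-$2$ Frobenius algebra satisfying $NC$, that $m\circ\Delta$ acts as multiplication by the image of $2T$ (the genus-$3$ surface over $2$), and that the structure constants are then forced to match those of $A_T$ after normalizing. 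Combined with the factorization from step two, this gives: every link homology functor (TQFT or not) over a ring with $2$ invertible factors through $\calF_T$.

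The main obstacle I anticipate is step two's bookkeeping about \emph{which} functors genuinely ``come from the complex.'' One has to be careful that a functor $\calF$ on $\Cobl$ is being applied after the reduction isomorphism (which is legitimate since the isomorphism lives in $\Komh\Mat(\Cobl)$ and $\calF$ is a functor on that category), and that nothing is lost in passing to homology --- i.e.\ that the base-change description survives $H(-)$, which requires either flatness of $W$ over $\mathbb{Q}[T]$ or a Universal Coefficients-type argument, or simply working at the chain level and noting the factorization is an honest equality of chain complexes up to homotopy. A secondary subtlety is the grading: one must check that $\calF(T)$ has degree $-4$ so the base change is degree-$0$ and the resulting homology is a graded invariant, which follows from the degree conventions of~\cite{ba1} already recalled in Chapter~\ref{chap:ClassificationQ}. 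Modulo these care points, the theorem is essentially a formal consequence of the Chapter~\ref{chap:ComplexReductionQ} reduction plus the observation that $\calF_T$ is its tautological incarnation.
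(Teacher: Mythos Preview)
Your proposal is correct and follows essentially the same approach as the paper: invoke the reduction theorem of Chapter~\ref{chap:ComplexReductionQ} to replace the geometric complex by one built from columns of empty sets with $\mathbb{Q}[T]$-matrix maps, observe that any functor on $\Cobl$ is then determined by its value on the empty set (a $\mathbb{Q}[T]$-module $W$), and conclude that the universal choice $W=\mathbb{Q}[T]$ yields $\calF_T$ with every other functor obtained by tensoring. The paper's proof is a five-sentence sketch of exactly this; your step~3 (twisting and base change for general rank-$2$ TQFTs) and your care points about flatness versus chain-level factorization go beyond what the paper records in the proof proper --- that material appears separately later in the same chapter --- but none of it is wrong or off-track.
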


\begin{proof}
We have seen in chapter \ref{chap:ComplexReductionQ} that in the reduced geometric complex the objects are $\mathbb{Q}[T]$-modules. The target objects of the link homology functor must be $\mathbb{Q}[T]$-modules too (by functoriality). Since every object (circle) in the complex is isomorphic to a direct sum of two empty sets, the functor
will be determined by choosing a single $\mathbb{Q}[T]$-module corresponding to the empty set. The universal choice would be $\mathbb{Q}[T]$ itself, and the complex coming from any other module will be obtained by tensoring the complex with it. Once this is done, the ``pre-TQFT'' structure turns into a real TQFT structure $\calF_T$.

\qed
\end{proof}

\begin{theorem} Every functor used to create link homology
theory from the geometric complex over $\mathbb{Z}$ factors through
$\calF^{co}_H$ which holds the maximum amount of information. I.e.
the TQFT $\calF^{co}_H$ is the universal functor for link homology.
\end{theorem}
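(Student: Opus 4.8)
The plan is to mirror the proof of the $\mathbb{Q}$-case theorem, but now using the reduction theorem over $\mathbb{Z}$ (Chapter~\ref{chap:ComplexReductionZ}) together with the extra structure maps $\Phi,\Psi$ coming from the torn pair of pants. First I would recall that, after marking the link and passing to the category of $1$-$1$ tangles, the reduced complex lives in the category of free $\mathbb{Z}[H]$-modules: every object is a column of copies of the single object $|$ (the special line), and all differentials are matrices with monomial $\mathbb{Z}[H]$ entries. Consequently any functor $\calF$ producing a link homology theory from the geometric complex must, by functoriality, send $|$ to some $\mathbb{Z}[H]$-module $M$, and it is completely determined by $M$ together with the images of the elementary cobordisms (the saddle/curtain maps, and the birth/death maps, i.e. the data encoded by $\Delta_1,m_1,\Phi,\Psi$). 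The universal choice is $M=\mathbb{Z}[H]$ itself: the complex attached to any other $M$ is obtained from the universal one by applying $-\otimes_{\mathbb{Z}[H]}M$, so every such theory factors through the universal one.

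Second I would identify this universal functor with $\calF^{co}_H$. The point is that $\calF^{co}_H$ is by construction the co-reduced theory of the Frobenius algebra $A_H=R_H[X]/(X^2-HX)$: one marks the knot, lets $A_H$ act on the Khovanov-type complex through the special circle, and takes the \emph{image} complex of multiplication by $X$. That image is the free $R_H$-module generated by $X$, i.e. exactly the ``one-dimensional'' object that Chapter~\ref{chap:ComplexReductionZ} showed the special line carries intrinsically (the pre-TQFT structure $\Psi:\emptyset\mapsto v_-$, $\Phi: v_-\mapsto H,\ v_+\mapsto 1$). So the reduced-complex description coming from the reduction theorem — special line $=$ the object spanned by $X$, non-special circles delooped into $v_\pm$, curtain maps given by $(\Delta_1,m_1)$ and torn-pants maps given by $(\Phi,\Psi)$ — is literally the chain complex of the co-reduced $\calF_H$ theory. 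Hence the universal functor and $\calF^{co}_H$ compute the same complex, and every link homology functor factors through $\calF^{co}_H$.

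Third, I would check maximality of information: since the reduced complex over $\mathbb{Z}$ is already built from free $\mathbb{Z}[H]$-modules with $\mathbb{Z}[H]$-matrix differentials and $\mathbb{Z}[H]$ is the morphism ring of the special line, no genuine information is lost in passing to $\calF^{co}_H$ — the functor is essentially the identity on the reduced category followed by remembering the $\mathbb{Z}[H]$-module structure. Any other TQFT (up to twisting and base change, cf.~\cite{kho1}) or non-TQFT functor is obtained by base change $-\otimes_{\mathbb{Z}[H]}M$, which can only lose information, so $\calF^{co}_H$ holds the maximum.

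The main obstacle I expect is the bookkeeping around the special line: over $\mathbb{Z}$ one cannot deloop the marked circle, so one must argue carefully that the correct universal target for the special line is not $\mathbb{Z}[H]\oplus\mathbb{Z}[H]$ (two copies of $v_\pm$, as a naive analogy with the $\mathbb{Q}$-case might suggest) but the ``one-dimensional'' $X$-generated module — i.e. that the $3S1$ relation, via the torn pair of pants, already forces $v_+$ on the special line to be expressible in terms of $v_-$ and $H$, which is precisely the co-reduction. Making this ``$v_+ = $ (image of $X$)'' identification rigorous, and verifying that the induced differentials then match those of the co-reduced $\calF_H$ complex on the nose (signs and grading shifts included), is the technical heart of the argument; everything else is the same base-change formalism as in the $\mathbb{Q}$-case.
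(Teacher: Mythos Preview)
Your approach is essentially the same as the paper's, which is extremely brief: it simply cites the reduction theorem over $\mathbb{Z}$ to say the complex lives in free $\mathbb{Z}[H]$-modules, declares the universal choice to be $\mathbb{Z}[H]$ itself, observes that this turns the pre-TQFT structure into $\calF^{co}_H$, and notes that all other theories are tensor products of this one. Your first three paragraphs expand exactly these three sentences.

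Your final ``obstacle'' paragraph, however, overcomplicates a point that the paper does not regard as an obstacle at all. There is no danger of assigning $\mathbb{Z}[H]\oplus\mathbb{Z}[H]$ to the special line: the reduction theorem has already delooped every non-special circle, leaving the special line as the \emph{only} object in the reduced category, and its endomorphism ring (curtains with handles) is $\mathbb{Z}[H]$, period. The one-dimensionality is not something forced after the fact by the $3S1$ relation or by expressing $v_+$ in terms of $v_-$; it is simply the structure of the reduced category. The identification with the \emph{co}-reduced (rather than reduced, or full) $\calF_H$ theory then comes from reading off the maps $\Phi,\Psi$ computed in Chapter~\ref{chap:ComplexReductionZ}, which show the special line carries the $X$-generated submodule. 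So you can drop the worry about ``making the $v_+=$ image of $X$ identification rigorous'': the paper's argument never needs to make it, because it works directly with the reduced category where the special line is already a single object.
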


\begin{proof}
Similar to the case over $\mathbb{Q}$, the complex is equivalent to a $\mathbb{Z}[H]$-modules complex, and the universal choice is $\mathbb{Z}[H]$. This makes the ``pre-TQFT'' structure over $\mathbb{Z}$ turn into the dominant TQFT functor $\calF^{co}_H$. All other theories will be tensor products of the universal one.

\qed
\end{proof}

\section{A ``big'' Universal TQFT by Khovanov}
A priori, TQFTs can be used to create link homology theory without
the geometric complex (meaning without satisfying the 4TU/S/T
relations). In ~\cite{kho1} Khovanov presents a universal rank 2
TQFT (Frobenius algebra) given by the following formula (denoted
$\calF_5$ there):

\[
  \Delta_{ht}: \begin{cases}
    1 \mapsto 1\otimes X + X\otimes 1 -h1\otimes 1&\\
    X \mapsto X\otimes X + t1\otimes 1 &
  \end{cases}
\]
\[
  m_{ht}: \begin{cases}
    1\otimes X\mapsto X &
    1\otimes 1\mapsto 1 \\
    X\otimes 1\mapsto X &
    X\otimes X\mapsto t1 + hX.
  \end{cases}
\]

This structure gives the Frobenius algebra
$A_{ht}=R_{ht}[X]/(X^2-hX-t)$ over the ring
$R_{ht}=\mathbb{Z}[h,t]$. Comparing to our notations (and these of
~\cite{ba1}) is done by putting $v_-=X$ and $v_+=1$. We will denote
this TQFT $\calF_{ht}$.
\\

Given a (tensorial) functor $\calF$ from $Cob$ to the category of
$R$-modules (a TQFT), one can construct a link homology theory and
ask whether it is (homotopy) invariant under Reidemeister moves. If
it is invariant under the first Reidemeister move, then it is a
Frobenius algebra of rank 2. Khovanov showed that every rank 2
Frobenius algebras can be \emph{twisted} into a descended theory,
$\calF'$, which is a base change of $\calF_{ht}$ (base change is
just a unital ground ring homomorphism which induces a change in the
algebra. The notion of twisting is explained in ~\cite{kho1} and ref
therein). The complexes associated to a link using $\calF$ and
$\calF'$ are isomorphic, thus all the information is still there
after the twist. Since base change just tensors the chain complex
with the appropriate new ring (over the old ring), algebra
$\calF_{ht}$ is universal for link homology theories coming from
TQFTs. The fact that $\calF_{ht}$ satisfies the 4TU/S/T relations
allows one to apply it as a homology theory functor on the geometric
complex and use ~\cite{ba1} results (giving Proposition 6 in
~\cite{kho1}). Base change does not change the fact that a theory
satisfies the 4TU/S/T relations, and since $\calF_{ht}$ satisfies
these relations, we have that every tensorial functor $\calF$ that
is invariant under Reidemeister-1 move can be twisted into a functor
$\calF'$ that satisfies the relations of $\Cobl$ and thus can be put
on the geometric complex without losing any homological information
relative to $\calF$. This gives the universality of the geometric
complex for all link homology theories coming from TQFTs. Thus, our ``smaller'' universal link homology functors presented in the theorems above
capture all the information coming from \emph{any} TQFT constructions in link homology theory.
\\

Over $\mathbb{Q}$ one can see how our universal theory
captures all the information of khovanov's ``big'' TQFT. We start
with $\calF_{ht}$ over $\mathbb{Q}$ (i.e. $R_{ht}=\mathbb{Q}[h,t]$).
By doing a change of basis: $
\begin{cases}
\begin{array}{l}
v_- -\frac{h}{2}v_+ \mapsto v_-\\
v_+ \mapsto v_+
\end{array}
\end{cases}$
we get the theory given by:
\[
  \begin{cases}
    v_+ \mapsto v_+\otimes v_- + v_-\otimes v_+ &\\
    v_- \mapsto v_-\otimes v_- + (t+\frac{h^2}{4}) v_+\otimes v_+ &
  \end{cases}
  \begin{cases}
    v_+\otimes v_-\mapsto v_- &
    v_+\otimes v_+\mapsto v_+ \\
    v_-\otimes v_+\mapsto v_- &
    v_-\otimes v_-\mapsto (t+\frac{h^2}{4})v_+
  \end{cases}
\]
\\
Re-normalizing $t+\frac{h^2}{4}=\frac{T}{4}:=\tilde{T}$, we see that
the above theory is just $\calF_{\tilde{T}}$ with the ground ring
extended by another superficial variable $h$. Every Calculation done
using $\calF_{ht}$ to get link homology, is equal to the same
calculation done with $\calF_{\tilde{T}}$ tensored with
$\mathbb{Q}[h]$, and holds exactly the same information. Note that
our complex reduction is performing this change of basis (to ``kick
out'' one redundant variable) on the complex level, before even
applying any TQFT. Doing first the complex reduction and then
applying the TQFT $\calF_{ht}$ will factorize the result through
$\calF_{\tilde{T}}$.
\\

In the general case, over $\mathbb{Z}$, one needs some further
observations which we turn to now.

\section{About the 1-handle and 2-handle operators in
$\calF_{ht}$} Given a TQFT the 1-handle operator takes a specific algebraic form, depending on the TQFT. This is done by viewing the operator that adds a handle to a cobordism as a map $\eps{5mm}{FatMan}$: $A \mapsto A$ between the two Frobenius algebras associated to the boundary circles. One can find the form by looking at the
multiplication and co-multiplication formulas. In the case of $\calF_{ht}$: $H=2X-h$. The 2-handle operator $T$ follows immediately by computing $T=H^2$ and reducing modulo $X^2-hX-t$ to get: $T=4t+h^2$. The 2-handle operator is an element of the ground ring, as expected from our 2-handle lemma, and can be multiplied anywhere in a tensor product. The 1-handle operator is an element of $A$ but not of the ground ring, and thus when operating on a tensor product one needs to specify the component to operate on, as we did by picking the special circle. Since $A$ is two dimensional, picking the basis to be $\left(\begin{array}{c} 1 \\ X \end{array} \right)$ would give rise to a two dimensional representation of the operators above: $H=\left(\begin{array}{cc} -h&2t \\ 2&h \end{array} \right)$, $T=\left(\begin{array}{cc} 4t+h^2 & 0 \\ 0&4t+h^2  \end{array} \right)$.

\section{The special line and H promotion}
The main result of chapter \ref{chap:ComplexReductionZ} was a reduction of the geometric complex into a complex composed of columns of the special line whose maps are matrices with $H$ monomials entries. Given such a complex one wishes to create a homology theory
out of it, i.e. apply some functor that will put an $H$-module on the special line with the possibility of taking kernels (and forming homology groups). As we have seen, the intrinsic structure that the special line caries is the one dimensional module
$\mathbb{Z}[H]$ generated by $X$. We can get a different structure by replacing $H$ with any integer matrix of dimension $n$ and the special line with direct sum of $n$ copies of $\mathbb{Z}$. We call this type of process \emph{a promotion} (realizing it is a fancy name for tensoring). Another type of promotion is replacing $H$ with a matrix (of dimension $n$) with polynomial entries in two variables ($h$ and $t$, say) and then promoting the special line into the direct sum of $n$ copies of $\mathbb{Z}[h,t]$. Note that some promotions may lose information regarding the geometric complex.
\\

\section{TQFTs via promotion and new type of link homology functors}

First, we want to find a promotion of $H$ that is equivalent to the
theory $\calF_{ht}$ and which does not lose any information on the geometric
complex level. From the above discussions it is clear that such a promotion is $H=\left( \begin{array}{cc} -h&2t \\2&h \end{array} \right)$. The special line will be promoted into two copies of $\mathbb{Z}[h,t]$. Since each power of the promoted $H$ adds a power of $t$ and $h$ to the matrix entries, the power of the 1-handle operator $H$ can be uniquely determined after the promotion and we lose no information. The algebraic complex one gets by applying this promotion is equal to the complex one gets by applying the TQFT $\calF_{ht}$ before the complex reduction.
\\

We can now easily get the other familiar TQFTs using the same
complex reduction and $H$ promotion technique :
\[
\calF_{\tilde{T}} \leftarrow H=\left( \begin{array}{cc} 0&2\tilde{T} \\
2&0 \end{array} \right) \hspace{7mm} \calF_H \leftarrow H=\left( \begin{array}{cc} -H&0 \\
2&H \end{array} \right)
\]
One gets the standard Khovanov homology ($X^2=0$) by the promotion
$H=\left( \begin{array}{cc} 0&0 \\ 2&0 \end{array} \right)$. The
special line is promoted to double copies of
$\mathbb{Z}[\tilde{T}]$, $\mathbb{Z}[H]$ and $\mathbb{Z}$
respectively. One can get the \emph{reduced Khovanov homology} by
focusing on the bottom right entry of the $H$ promotion matrix. It is
interesting to note that in the standard Khovanov homology case
indeed we lose information and $H^2=T=0$. In the geometric language
this means that in order to get the standard Khovanov homology one
has to ignore surfaces with genus 2 and above. This was observed in
~\cite{ba1}. Reduction to Lee's theory ~\cite{lee1} is done by
substituting $\tilde{T}=1$.
\\

One can create other types of promotions that will enable us to
control the order of $H$ involved in the theory. We are able to
cascade down from the most general theory, the one that involves all
powers of $H$ (i.e. surfaces with any genera in the topological
language), into a theory that involves only certain powers of $H$
(i.e. genera up to a certain number). For example, promote the
special line to three copies of $\mathbb{Z}$, and $H$ to the matrix
$\left(\begin{array}{ccc} 0&0&0 \\ 1&0&0 \\ 0&1&0 \end{array} \right)$. This
theory involves only powers of $H$ smaller or equal to 2, that is
surfaces of genus up to 2. These promotions can be viewed as a
family of theories extrapolating between the ``standard'' Khovanov
TQFT and our universal theory (reminding of a genus perturbation expansion
in string theory).

\section{A comment on tautological functors}
In section 9 of ~\cite{ba1} tautological functors where defined on
the geometric complex associated to a link. One needs to fix an
object in $\Cobl$, say $\calO'$, and then the tautological functor
is defined by $\calF_{\calO'}(\calO)=\Mor(\calO',\calO)$, taking
morphisms to compositions of morphisms. Our classification allows us
to state the following regarding tautological functors:
\\

\begin{corollary}
The tautological functor $\calF_\bigcirc(-)$ over $\mathbb{Z}[H]$ is
the TQFT $\calF_H$.\\
The tautological functor $\calF_\emptyset(-)$ over $\mathbb{Q}[T]$
is the TQFT $\calF_T$.
\end{corollary}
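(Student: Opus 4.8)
\emph{Proof proposal.} The plan is to recognize the two \emph{delooping} isomorphisms constructed in Chapters~\ref{chap:ComplexReductionQ} and~\ref{chap:ComplexReductionZ} as exactly the natural isomorphisms exhibiting these tautological functors as the named TQFTs. In both cases the idea is the same: apply the tautological functor to the \emph{delooped} complex. Delooping has already (i) replaced each relevant circle object by a column of two shifted empty sets and (ii) identified the induced complex maps with the ``pre-TQFT'' structures $(\Delta_2,m_2)$ over $\mathbb{Q}$ and $(\Delta_1,m_1)$ over $\mathbb{Z}$. So what is left is to check three things: that the tautological functor sends the single surviving object (the empty set for $\calF_\emptyset$, resp. the probe circle for $\calF_\bigcirc$) to the ground ring $\mathbb{Q}[T]$, resp. $\mathbb{Z}[H]$; that it turns the geometric maps $(\Delta_i,m_i)$ into the product and coproduct of the Frobenius algebra $A_T$, resp. $A_H$; and that it carries disjoint unions to tensor products, so that the resulting functor really is a TQFT. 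Once this is done, the classification of $2$-dimensional TQFTs by Frobenius algebras~\cite{ab,kho1} finishes the identification.

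For $\calF_\emptyset$ over $\mathbb{Q}[T]$: by the classification of Chapter~\ref{chap:ClassificationQ}, $\calF_\emptyset(\emptyset)=\Mor(\emptyset,\emptyset)$ is the module of closed surfaces, freely generated over $\mathbb{Z}[\frac12,T]$ by the empty surface in the genus-$3$ presentation, i.e. it is $\mathbb{Q}[T]$ after extension of scalars; and more generally $\Mor(\emptyset,\bigcirc^{\sqcup n})$ is free of rank $2^n$ on unions of the disk $v_+$ and the genus-$1$ one-boundary surface $v_-$, canonically isomorphic to $(\mathbb{Q}[T]\,v_+\oplus\mathbb{Q}[T]\,v_-)^{\otimes n}$ via disjoint union. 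Since the source object is $\emptyset$, this disjoint-union comparison map is canonical, so $\calF_\emptyset$ is monoidal. Under the identification $v_+\mapsto 1$, $v_-\mapsto X$ the birth disk gives the algebra unit, and the pair of pants and its reverse induce exactly $m_2$ and $\Delta_2$ by the computations of Chapter~\ref{chap:ComplexReductionQ}; in particular gluing two copies of $v_-$ to a pair of pants produces a genus-$2$ one-boundary surface, which the Surface Reduction Formula of Chapter~\ref{chap:ClassificationQ} rewrites as a ground-ring multiple of the disk, giving the relation $X^2=T$ up to the inessential rescaling of the $2$-handle operator fixed in Chapter~\ref{chap:ComplexReductionQ}. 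As $(\Delta_2,m_2)$ is by construction the Frobenius structure of $A_T=\mathbb{Q}[T][X]/(X^2-T)$, we conclude that $\calF_\emptyset$ coincides with $\calF_T$.

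For $\calF_\bigcirc$ over $\mathbb{Z}[H]$ the argument is parallel, but now the special-component bookkeeping of Chapter~\ref{chap:ClassificationZ} is essential: take the probe circle to be the special component throughout, so that $\calF_\bigcirc$ is $\mathbb{Z}[H]$-linear and the comparison maps are well defined by Remark~\ref{remark:Hlinear}, and deloop only the non-special circles. Then $\calF_\bigcirc(\bigcirc)=\Mor(\bigcirc,\bigcirc)$ is free of rank $2$ over $\mathbb{Z}[H]$, with generators the ``cup--cap'' (the disjoint pair of disks) and the identity cylinder. Here the identification is \emph{twisted}: the birth disk forces the cup--cap to correspond to $1\in A_H$, while the cylinder corresponds to $X$. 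With this choice the product sends $X\otimes X$ to the class of the handle cobordism $\bigcirc\to\bigcirc$, which the Surface Reduction Formula over $\mathbb{Z}$ rewrites as $H$ times the identity cylinder, i.e. $X^2=HX$; the remaining products, the counit, and the coproduct reproduce $m_1$, $\epsilon$ and $\Delta_1$ exactly as in Chapter~\ref{chap:ComplexReductionZ}. Hence $\calF_\bigcirc(\bigcirc)\cong A_H=\mathbb{Z}[H][X]/(X^2-HX)$ as Frobenius algebras, and $\calF_\bigcirc$ is the TQFT $\calF_H$.

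The computations themselves are routine --- they are the same cobordism evaluations already performed in Chapters~\ref{chap:ComplexReductionQ} and~\ref{chap:ComplexReductionZ}. The delicate points, and where I expect the real work to be, are: over $\mathbb{Q}$, pinning down the powers of $2$ and the genus-$3$/boxed-surface normalization of $T$ so that the identification with $\calF_T$ holds on the nose rather than merely up to an inessential rescaling; and over $\mathbb{Z}[H]$, keeping the probe circle special through \emph{every} composition, so that $H$-linearity and monoidality actually hold, together with the observation that the multiplicative unit of $\calF_\bigcirc(\bigcirc)$ is the geometric cup--cap and \emph{not} the identity cylinder --- choosing the naive identification instead would produce a map that is not an isomorphism.
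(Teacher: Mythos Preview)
Your proposal is correct and follows essentially the same route as the paper: both proofs rest entirely on the surface classification of Chapters~\ref{chap:ClassificationQ} and~\ref{chap:ClassificationZ}, with the key instruction (for the $\mathbb{Z}[H]$ case) being to declare the source circle $\bigcirc$ special. The paper's proof is a single sentence pointing to the classification; yours unpacks it, and in doing so you correctly identify the non-obvious point that in $\calF_\bigcirc(\bigcirc)=\Mor(\bigcirc,\bigcirc)$ the identity cylinder corresponds to $X$ and the cup--cap to $1$ --- a detail the paper leaves implicit. Your framing via delooping is slightly redundant (the classification already gives the module bases and the monoidal comparison directly, without needing to invoke the complex isomorphism), but it does no harm since delooping and classification are two faces of the same computation.
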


\begin{proof}
Indeed this is a corollary to the surface classification. In the
first case declare the special circle to be the source $\bigcirc$.
\qed
\end{proof}

\vspace{3mm}
Due to the universality theorems of this thesis we see that \emph{tautological functors are able to extract maximal amount of information from the geometric complex}.

\chapter{Computations, special cases, phenomena and extensions}\label{chap:CompExt}

This chapter presents a wide range of results in link homology theory, all emanating from the universal theory presented in this thesis. We describe how one computes efficiently (using a computer) the universal Khovanov link homology. We will describe the algorithm, and present examples of such calculations for many knots. Using these calculations it will be easy to understand the different specializations from the universal theory to specific homology theories. We present results regarding various homology theories that follow from the structure of the universal complex. These results are both theoretical and phenomenological. Finally, we will also see how the universal theory interacts with other tools already used in the field (like spectral sequences and the Rasmussen invariant) and with various extensions of Khovanov link homology (like the $sl(3)$ link homology theory). One of the objectives of this chapter is to show how the universal theory, presented in this thesis, easily projects all previous results regarding link homology and enables generalizations of these results. We hope to demonstrate that the universal complex is the better approach to phenomenology research as well as to theoretical research.

\section{Computations of the universal theory}
We wish to calculate the universal Khovanov homology theory. That is, we want to calculate the geometric complex \emph{over $\mathbb{Z}[H]$} (as described in chapter \ref{chap:ComplexReductionZ}) without any restrictions and \emph{before} applying any functor (like a TQFT) to produce a homology theory. Obviously if one follows the definitions one can theoretically compute the complex. This would be very slow and non efficient (as complexity grows exponentially with the number of crossings). Moreover, it would be very difficult to reduce the geometric complex, once computed, into a homotopic complex which is simpler and smaller. As was shown in \cite{ba4}, for the case of the original standard Khovanov homology (the TQFT $\mathbb{Q}[X]/X^2$), fast computations can make one happy! Thus we wish for an efficient algorithm. At first sight the
complex isomorphism  \ref{def:delooping} does not seem to reduce the
geometric complex at all (it doubles the amount of objects in the complex). The surprising fact is that one can use this isomorphisms (and some homology theory techniques) to create a very efficient crossing-by-crossing local algorithm to calculate the geometric complex (and thus any link homology theory). It is important to mention that the algorithm was first proposed by D.Bar-Natan for the standard Khovanov
homology (over $\mathbb{Q}$ with high genera set to zero), as reported in ~\cite{ba3} \cite{ba4}. More recently it was implemented by J.Green \cite{green} for calculating the universal case over $\mathbb{Z}[H]$ (with algorithm adjusted using the work presented in this thesis). We will be using this implementation to produce the calculations.

\subsection{The algorithm for the efficient computation of the universal Khovanov link homology theory}\label{section:algorithm}

 We follow the description of the basic algorithm in \cite{ba3} while inserting the adjustments needed when dealing with the universal theory, and giving a description of Green's implementation \cite{green}. The main ingredients of the calculation are the appropriate delooping process and a standard reduction lemma from homological algebra theory. Combined with the fact that the geometric complex behaves well under tangle compositions, one can create a \emph{local} calculation scheme in which each crossing is added to previous calculations, while reduction is done after every such step (instead of at the end only). For simplicity, we will focus only on knots in this chapter.
 \\

 We start with a knot. The universal theory requires calculations using a special component (or a special line as in chapter \ref{chap:ComplexReductionZ}) thus we cut open the knot into a 1-1 tangle. The program does that, along with ordering the crossings (the input for the program is actually an encoding of a planar diagram for our knot). Choosing a first crossing we create the geometrical complex that is associated to that crossing. Then, the program chooses another crossing and adds it to the geometric complex. This is done by gluing the new crossing to every geometric objects that are already in the complex, according to the way it was glued in the knot diagram.
 \\

Now, Whenever an object in our complex contains a closed loop, remove it using the delooping process in \ref{def:delooping} :
\begin{center}
$\eps{27mm}{linecirc}$ $\eps{17mm}{faczl2b}$ $ \left[
\begin{array}{c}
 |~\emptyset \{ -1 \}\\
\\
\\
|~\emptyset\{+1\}
\end{array} \right] $
$\eps{17mm}{faczra}$ $\eps{27mm}{linecirc}$
\end{center}

This done, our complex becomes a new complex. The new complex is bigger than the one before the delooping, but it is made up of fewer possible objects. Thus it is likely that many morphisms in the new complex are isomorphisms (and indeed this is the case).
The program keeps track of this process, it adds the new crossing to the old complex, and recognize closed circles. It deloops these circles while keeping track of degrees \emph{and the various handles} that are attached to the morphisms in the complex. Then it tries to recognize isomorphisms out of the complex morphisms. These are easy to recognize as they are plain curtains (a surface between two lines with no handles).
Once it recognizes the isomorphisms it iteratively uses the standard homological algebra reduction lemma to reduce the size of the complex :

\begin{lemma} \cite{ba3} If $\phi:b_1\to b_2$ is an isomorphism (in some additive
category $\calC$), then the four term complex segment in $\Mat(\calC)$
\begin{equation}
  \xymatrix@C=2cm{
    \cdots\
    \left[C\right]
    \ar[r]^{\begin{pmatrix}\alpha \\ \beta\end{pmatrix}} &
    {\begin{bmatrix}b_1 \\ D\end{bmatrix}}
    \ar[r]^{\begin{pmatrix}
      \phi & \delta \\ \gamma & \epsilon
    \end{pmatrix}} &
    {\begin{bmatrix}b_2 \\ E\end{bmatrix}}
    \ar[r]^{\begin{pmatrix} \mu & \nu \end{pmatrix}} &
    \left[F\right] \  \cdots
  }
\end{equation}
is isomorphic to the (direct sum) complex segment
\begin{equation}
  \xymatrix@C=3cm{
    \cdots\
    \left[C\right]
    \ar[r]^{\begin{pmatrix}0 \\ \beta\end{pmatrix}} &
    {\begin{bmatrix}b_1 \\ D\end{bmatrix}}
    \ar[r]^{\begin{pmatrix}
      \phi & 0 \\ 0 & \epsilon-\gamma\phi^{-1}\delta
    \end{pmatrix}} &
    {\begin{bmatrix}b_2 \\ E\end{bmatrix}}
    \ar[r]^{\begin{pmatrix} 0 & \nu \end{pmatrix}} &
    \left[F\right] \  \cdots
  }.
\end{equation}
Both these complexes are homotopy equivalent to the (simpler) complex
segment
\begin{equation}
  \xymatrix@C=3cm{
    \cdots\
    \left[C\right]
    \ar[r]^{\left(\beta\right)} &
    {\left[D\right]}
    \ar[r]^{\left(\epsilon-\gamma\phi^{-1}\delta\right)} &
    {\left[E\right]}
    \ar[r]^{\left(\nu\right)} &
    \left[F\right] \  \cdots
  }.
\end{equation}
Here $C$, $D$, $E$ and $F$ are arbitrary columns of objects in $\calC$
and all Greek letters (other than $\phi$) represent arbitrary matrices
of morphisms in $\calC$ (having the appropriate dimensions, domains
and ranges); all matrices appearing in these complexes are block-matrices
with blocks as specified. $b_1$ and $b_2$ are billed here as individual
objects of $\calC$, but they can equally well be taken to be columns of
objects provided (the morphism matrix) $\phi$ remains invertible.
\end{lemma}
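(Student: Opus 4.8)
\noindent\emph{Proof proposal.} This is a formal ``Gaussian elimination'' statement about complexes in an additive category; the plan is to prove it by an explicit invertible change of basis on the four displayed terms, followed by the standard fact that a contractible direct summand may be discarded up to homotopy. First I would record the consequences of the hypothesis that the displayed four--term sequence is a complex. Naming its differentials $d_0=\binom{\alpha}{\beta}$, $d_1=\left(\begin{smallmatrix}\phi&\delta\\\gamma&\epsilon\end{smallmatrix}\right)$ and $d_2=\begin{pmatrix}\mu&\nu\end{pmatrix}$, the identities $d_1d_0=0$ and $d_2d_1=0$ translate into
\[
\phi\alpha+\delta\beta=0,\qquad \gamma\alpha+\epsilon\beta=0,\qquad \mu\phi+\nu\gamma=0,\qquad \mu\delta+\nu\epsilon=0 .
\]
Since $\phi$ is invertible, the first and third of these give $\alpha=-\phi^{-1}\delta\beta$ and $\mu=-\nu\gamma\phi^{-1}$, which are exactly the relations that make the elimination come out on the nose rather than up to error terms.

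Next I would write down the isomorphism between the first two segments. It is the identity on $[C]$ and on $[F]$, and on the two middle objects it is the block--triangular (hence invertible) pair
\[
g_1=\begin{pmatrix}1&\phi^{-1}\delta\\0&1\end{pmatrix},\qquad
g_2=\begin{pmatrix}1&0\\-\gamma\phi^{-1}&1\end{pmatrix},
\]
whose inverses are obtained by negating the off--diagonal block. A direct block computation, using $\alpha=-\phi^{-1}\delta\beta$ and $\mu=-\nu\gamma\phi^{-1}$, then gives $g_1 d_0=\binom{0}{\beta}$, $\;g_2 d_1 g_1^{-1}=\left(\begin{smallmatrix}\phi&0\\0&\epsilon-\gamma\phi^{-1}\delta\end{smallmatrix}\right)$ and $d_2 g_2^{-1}=\begin{pmatrix}0&\nu\end{pmatrix}$, which are precisely the differentials of the second segment. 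The squares attaching this segment to the rest of the ambient complex commute automatically, since there the chain map is the identity and the differentials are unchanged. Hence $(\mathrm{id}_{[C]},\,g_1,\,g_2,\,\mathrm{id}_{[F]})$ is an isomorphism in $\Kom(\Mat(\calC))$ between the first and the second segments; in particular the second segment is again a genuine complex.

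Finally, the second (block--diagonal) complex decomposes visibly as a direct sum of the two--term complex $\cdots\to 0\to b_1\xrightarrow{\phi}b_2\to 0\to\cdots$ and the complex $\cdots\to[C]\xrightarrow{\beta}[D]\xrightarrow{\epsilon-\gamma\phi^{-1}\delta}[E]\xrightarrow{\nu}[F]\to\cdots$. Because $\phi$ is an isomorphism the first summand is contractible, with contracting homotopy $\phi^{-1}\colon b_2\to b_1$; and a direct sum with a contractible complex is homotopy equivalent to the remaining summand. So the second segment is homotopy equivalent to the short one, and composing with the isomorphism of the previous paragraph yields the asserted homotopy equivalence of the original segment with the short segment. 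Every manipulation above is purely formal in $\Mat(\calC)$, so nothing changes if $b_1$ and $b_2$ are taken to be columns of objects rather than single objects, provided the matrix $\phi$ remains invertible; this gives the last sentence of the lemma.

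I do not expect a genuine obstacle here: every step is routine block--matrix bookkeeping. The two points that need care are to use the relations $d\circ d=0$ (so that the off--diagonal blocks are eliminated exactly, not merely up to correction terms) and to check that the chain--map squares reaching outside the four displayed terms still commute --- both of which are immediate, because the chain map is the identity there and the ambient differentials are untouched.
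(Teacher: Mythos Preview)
Your proof is correct and is exactly the standard ``Gaussian elimination'' argument. Note, however, that the paper does not actually prove this lemma: it is quoted from \cite{ba3} and stated without proof, so there is no in-paper argument to compare against. Your explicit change-of-basis matrices $g_1,g_2$ together with the contractible-summand observation constitute precisely the proof one finds in the source \cite{ba3} (where it is sketched more tersely), so there is nothing to add.
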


Note that while reducing the complex the program needs to keep track of compositions of surfaces, this is done by counting boundary components and handles and using the Euler characteristic with computed genus of the surfaces in order to recognize the resulting surfaces (morphisms) after the lemma is applied. Not a trivial task!
\\

Finally, the complex we get after applying the reduction lemma is indeed smaller in size. Next, the program picks another crossing and repeats the process (glue the crossing + delooping + reduction lemma). It is obvious that this local algorithm (crossing by crossing) enables great reduction in computational costs and guarantees getting a very simple complex at the end of the process (due to all of the intermediate complex reductions).
\\

The final result, in the case of the universal complex, is a complex built out of special lines, and maps built out of morphisms which look like curtains with possible handles on them (powers of $H$). This is the universal Khovanov ($sl(2)$) link homology theory, as described in this thesis. Here is the universal complex for the knot $3_1$ (the Trefoil knot) :

\[
  \xymatrix@C=2cm{
    {\begin{bmatrix} |  \end{bmatrix}}_{-3,-8}
    \ar[r]^{\begin{pmatrix}    \eps{15mm}{curtain3}    \end{pmatrix}}
    &
    {\begin{bmatrix}     |    \end{bmatrix}}_{-2,-6}
    \ar[r]^{\begin{pmatrix}      0    \end{pmatrix}}
    &
    {\begin{bmatrix}    \emptyset    \end{bmatrix}}_{-1}
    \ar[r]^{\begin{pmatrix}    0    \end{pmatrix}}
    &
    \begin{bmatrix}    |    \end{bmatrix}_{0,-2}
  }
\]

\[\eps{30mm}{complextable}\]

%\[\eps{130mm}{complex}\]

On top, the universal complex is presented. The numbers on the right to each line are the homological degree $i$ (the first number) and the internal (vertical) grading $j$ (the second number). The maps are curtains with possible handles on them or zeros (if there are no curtains at all). We have no line at homological degree $-1$, thus the empty set. The table is a common encoding of the ``standard'' Khovanov homology over $\mathbb{Q}$ (that is the TQFT $\mathbb{Q}[X]/X^2$ applied to the geometric complex). The dots represent one copy of $\mathbb{Q}$, the vertical axis is the grading $j$ and the horizontal is the homological degree $i$.
\\
Instead of drawing lines and curtains we will use the following presentation of the universal complex :
\[
  \xymatrix@C=2cm{
    {\begin{bmatrix} -8  \end{bmatrix}}_{-3}
    \ar[r]^{\begin{pmatrix}    H    \end{pmatrix}}
    &
    {\begin{bmatrix}     -6    \end{bmatrix}}_{-2}
    \ar[r]^{\begin{pmatrix}      0    \end{pmatrix}}
    &
    {\begin{bmatrix}    ~    \end{bmatrix}}_{-1}
    \ar[r]^{\begin{pmatrix}    0    \end{pmatrix}}
    &
    \begin{bmatrix}    -2    \end{bmatrix}_{0}
  }
\]
Where the numbers in the square brackets represent the internal degrees of the special lines, the subscripts represent the homological degree and the maps are given by matrices of monomials in $H$, the 1-handle operator. We usually drop the empty brackets completely.

\subsection{Examples: The universal Khovanov link homology complex of all knots up to 7 crossings}
Following are simple examples of the universal complex. Want to calculate the universal complex for your favorite knot? Read section \ref{section:software}.
\\
\begin{center}

$3_1 \eps{15mm}{3_1}$
\[
  \xymatrix@C=1cm{
    {\begin{bmatrix} -8  \end{bmatrix}}_{-3}
    \ar[r]^{\begin{pmatrix}    H    \end{pmatrix}}
    &
    {\begin{bmatrix}     -6    \end{bmatrix}}_{-2}
    \ar[r]^{\begin{pmatrix}      0    \end{pmatrix}}
    &
    {\begin{bmatrix}    ~    \end{bmatrix}}_{-1}
    \ar[r]^{\begin{pmatrix}    0    \end{pmatrix}}
    &
    \begin{bmatrix}    -2    \end{bmatrix}_{0}
  }
\]
\vspace{6mm}

$4_1 \eps{15mm}{4_1}$
\[
  \xymatrix@C=1cm{
    {\begin{bmatrix} -4  \end{bmatrix}}_{-2}
    \ar[r]^{\begin{pmatrix}    H    \end{pmatrix}}
    &
    {\begin{bmatrix}     -2    \end{bmatrix}}_{-1}
    \ar[r]^{\begin{pmatrix}      0    \end{pmatrix}}
    &
    {\begin{bmatrix}    0    \end{bmatrix}}_{0}
    \ar[r]^{\begin{pmatrix}    0    \end{pmatrix}}
    &
    \begin{bmatrix}    2    \end{bmatrix}_{1}
    \ar[r]^{\begin{pmatrix}    H    \end{pmatrix}}
    &
    \begin{bmatrix}    4    \end{bmatrix}_{2}
  }
\]
%\\ L(-2,-4) H L(-1,-2) 0 L(0,0) 0 L(1,2) H L(2,4)
\vspace{6mm}

$5_1 \eps{15mm}{5_1} $
\[
  \xymatrix@C=1cm{
    {\begin{bmatrix} -14  \end{bmatrix}}_{-5}
    \ar[r]^{\begin{pmatrix}    H    \end{pmatrix}}
    &
    {\begin{bmatrix}     -12    \end{bmatrix}}_{-4}
    \ar[r]^{\begin{pmatrix}      0    \end{pmatrix}}
    &
    {\begin{bmatrix}    -10    \end{bmatrix}}_{-3}
    \ar[r]^{\begin{pmatrix}    H    \end{pmatrix}}
    &
    \begin{bmatrix}    -8    \end{bmatrix}_{-2}
    \ar[r]^{\begin{pmatrix}    0    \end{pmatrix}}
    &
    \begin{bmatrix}    -4    \end{bmatrix}_{0}
  }
\]
%\\L(-5,-14) H L(-4,-12) 0 L(-3,-10) H L(-2,-8) 0 L(0,-4)
\vspace{6mm}

$5_2 \eps{15mm}{5_2} $
\[
  \xymatrix@C=1cm{
    \begin{bmatrix} -12  \end{bmatrix}_{-5}
    \ar[r]^{\begin{pmatrix}    H    \end{pmatrix}}
    &
    \begin{bmatrix}     -10    \end{bmatrix}_{-4}
    \ar[r]^{\begin{pmatrix}      0    \end{pmatrix}}
    &
    \begin{bmatrix}    -8    \end{bmatrix}_{-3}
    \ar[r]^{\begin{pmatrix}    H\\0    \end{pmatrix}}
    &
    {\begin{bmatrix}-6 \\-6 \end{bmatrix}}_{-2}
    \ar[r]^{\begin{pmatrix}    0&H    \end{pmatrix}}
    &
    \begin{bmatrix}    -4    \end{bmatrix}_{-1}
        \ar[r]^{\begin{pmatrix}    0    \end{pmatrix}}
    &
    \begin{bmatrix}    -2    \end{bmatrix}_{0}
  }
\]
%\\L(-5,-12, 1) H L(-4,-10, 1) 0 L(-3,-8, 1) H(-8,-6,1) L(-2,-6,1)+L(-2,-6,2) -H(-6, 2,-4) L(-1,-4, 1) 0 L(0,-2, 1)
\vspace{6mm}

$6_1 \eps{15mm}{6_1} $
\[
  \xymatrix@C=1cm{
    \begin{bmatrix} -8  \end{bmatrix}_{-4}
    \ar[r]^{\begin{pmatrix}    H    \end{pmatrix}}
    &
    \begin{bmatrix}     -6    \end{bmatrix}_{-3}
    \ar[r]^{\begin{pmatrix}      0    \end{pmatrix}}
    &
    \begin{bmatrix}    -4    \end{bmatrix}_{-2}
    \ar[r]^{\begin{pmatrix}    H\\0    \end{pmatrix}}
    &
    {\begin{bmatrix}    -2\\-2    \end{bmatrix}}_{-1}
    \ar[r]^{\begin{pmatrix}   0&0 \\ 0&H    \end{pmatrix}}
    &
    {\begin{bmatrix}    0\\0    \end{bmatrix}}_{0}
     \ar[r]^{\begin{pmatrix}    0    \end{pmatrix}}
    &
    \begin{bmatrix}    2    \end{bmatrix}_{1}
     \ar[r]^{\begin{pmatrix}    H    \end{pmatrix}}
    &
    \begin{bmatrix}    4    \end{bmatrix}_{2}
  }
\]
%\\L(-4,-8) -H L(-3,-6) 0 L(-2,-4, 1) -H(-4,-2,1) L(-1,-2,1)+L(-1,-2,2) H(-2,2,0,2) L(0,0,1)+L(0,0,2) 0 L(1,2) H L(2,4,1)
\vspace{6mm}

$6_2 \eps{15mm}{6_2} $
\[
  \xymatrix@C=1cm{
    \begin{bmatrix} -0  \end{bmatrix}_{-4}
    \ar[r]^{\begin{pmatrix}    0\\H    \end{pmatrix}}
    &
    {\begin{bmatrix}     -8\\-8    \end{bmatrix}}_{-3}
    \ar[r]^{\begin{pmatrix}     H&0\\0&0    \end{pmatrix}}
    &
    {\begin{bmatrix}    -6\\-6    \end{bmatrix}}_{-2}
    \ar[r]^{\begin{pmatrix}    0&H \\ 0&0    \end{pmatrix}}
    &
    {\begin{bmatrix}    -4\\-4    \end{bmatrix}}_{-1}
    \ar[r]^{\begin{pmatrix}   0&0 \\ 0&H    \end{pmatrix}}
    &
    {\begin{bmatrix}    -2\\-2    \end{bmatrix}}_{0}
     \ar[r]^{\begin{pmatrix}    0    \end{pmatrix}}
    &
    \begin{bmatrix}    0    \end{bmatrix}_{1}
     \ar[r]^{\begin{pmatrix}    H    \end{pmatrix}}
    &
    \begin{bmatrix}    2    \end{bmatrix}_{2}
  }
\]
%\\L(-4,-10) -H(-10,-8,2) L(-3,-8,1)+L(-3,-8,2) H(-8,1,-6,1) L(-2,-6,1)+L(-1,-6,2) H(-6,2,-4,1) L(-1,-4,1)+L(-1,-4,2) H(-4,2,-2,2) L(0,-2,1)+L(0,-2,2) 0 L(1,0) H L(2,2)
\vspace{6mm}

$6_3 \eps{15mm}{6_3} $
\[
  \xymatrix@C=1cm{
    \begin{bmatrix} -6  \end{bmatrix}_{-3}
    \ar[r]^{\begin{pmatrix}    H\\0    \end{pmatrix}}
    &
    {\begin{bmatrix}     -4\\-4    \end{bmatrix}}_{-2}
    \ar[r]^{\begin{pmatrix}     0&H\\0&0    \end{pmatrix}}
    &
    {\begin{bmatrix}    -2\\-2    \end{bmatrix}}_{-1}
    \ar[r]^{\begin{pmatrix}    0&0\\0&H\\0&0    \end{pmatrix}}
    &
    {\begin{bmatrix}    0\\0\\0    \end{bmatrix}}_{0}
    \ar[r]^{\begin{pmatrix}   \textit{0}&H \\ \textbf{0}&\hat{0}   \end{pmatrix}}
    &
    {\begin{bmatrix}    2\\2    \end{bmatrix}}_{1}
     \ar[r]^{\begin{pmatrix}    0&0\\0&H    \end{pmatrix}}
    &
    {\begin{bmatrix}    4\\4    \end{bmatrix}}_{2}
     \ar[r]^{\begin{pmatrix}    H&0    \end{pmatrix}}
    &
    \begin{bmatrix}    6    \end{bmatrix}_{3}
  }
\]

($\textbf{0},\textit{0},\hat{0}$ represent $2\times2,1\times2,2\times1$ zero matrices)
%\\L(-3,-6) H(-6,-4,1) L(-2,-4,1)+L(-2,-4,2) H(-4,2,-2,1) L(-1,-2,1)+ L(-1,-2,2) H(-2,2,0,2) L(0,0,1)+L(0,0,2)+L(0,0,3) H(0,3,2,1) L(1,2,1)+L(1,2,2) -H(2,2,4,2) L(2,4,1)+L(2,4,2) H(4,1,6) L(3,6)
\vspace{6mm}

$7_1 \eps{15mm}{7_1} $
\[
  \xymatrix@C=0.5cm{
    \begin{bmatrix} -20  \end{bmatrix}_{-7}
    \ar[r]^{\begin{pmatrix}    H    \end{pmatrix}}
    &
    {\begin{bmatrix}     -18    \end{bmatrix}}_{-6}
    \ar[r]^{\begin{pmatrix}     0    \end{pmatrix}}
    &
    {\begin{bmatrix}    -16    \end{bmatrix}}_{-5}
    \ar[r]^{\begin{pmatrix}   H   \end{pmatrix}}
    &
    {\begin{bmatrix}    -14    \end{bmatrix}}_{-4}
    \ar[r]^{\begin{pmatrix}   0    \end{pmatrix}}
    &
    {\begin{bmatrix}    -12    \end{bmatrix}}_{-3}
     \ar[r]^{\begin{pmatrix}    H    \end{pmatrix}}
    &
    {\begin{bmatrix}    -10    \end{bmatrix}}_{-2}
     \ar[r]^{\begin{pmatrix}    0    \end{pmatrix}}
    &
    \begin{bmatrix}    -6    \end{bmatrix}_{0}
  }
\]

%\\L(-7,-20) H L(-6,-18) 0 L(-5,-16) H L(-4,-14) 0 L(-3,-12) H L(-2,-10) 0 L(0,-6)
\vspace{6mm}

$7_2 \eps{15mm}{7_2} $
\[
  \xymatrix@C=1cm{
    \begin{bmatrix} -16  \end{bmatrix}_{-7}
    \ar[r]^{\begin{pmatrix}    H    \end{pmatrix}}
    &
    {\begin{bmatrix}     -14    \end{bmatrix}}_{-6}
    \ar[r]^{\begin{pmatrix}     0    \end{pmatrix}}
    &
    {\begin{bmatrix}    -12    \end{bmatrix}}_{-5}
    \ar[r]^{\begin{pmatrix}    H\\0    \end{pmatrix}}
    &
    {\begin{bmatrix}    -10\\-10    \end{bmatrix}}_{-4}
    \ar[r]^{\begin{pmatrix}   0&0 \\ 0&H    \end{pmatrix}}
    &
}\]
\[
  \xymatrix@C=1cm{
    {\begin{bmatrix}    -8\\-8    \end{bmatrix}}_{-3}
     \ar[r]^{\begin{pmatrix}    H&0\\0&0    \end{pmatrix}}
    &
    {\begin{bmatrix}    -6\\-6    \end{bmatrix}}_{-2}
     \ar[r]^{\begin{pmatrix}    0&H    \end{pmatrix}}
    &
    \begin{bmatrix}    -4    \end{bmatrix}_{-1}
    \ar[r]^{\begin{pmatrix}    0    \end{pmatrix}}
    &
    \begin{bmatrix}    -2    \end{bmatrix}_{0}
    }
\]
%\\L(-7,-16) H L(-6,-14) 0 L(-5,-12) H(-12,-10,1) L(-4,-10,1)+L(-4,-10,2) -H(-10,2,-8,2) L(-3,-8,1)+L(-3,-8,2) H(-8,1,-6,1) L(-2,-6,1)+L(-2,-6,2) -H(-6,2,-4) L(-1,-4) 0 L(0,-2)
\vspace{6mm}

$7_3 \eps{15mm}{7_3} $
\[
  \xymatrix@C=1.5cm{
    \begin{bmatrix} 4  \end{bmatrix}_{0}
    \ar[r]^{\begin{pmatrix}    0    \end{pmatrix}}
    &
    {\begin{bmatrix}     6    \end{bmatrix}}_{1}
    \ar[r]^{\begin{pmatrix}     H\\0    \end{pmatrix}}
    &
    {\begin{bmatrix}    8\\8    \end{bmatrix}}_{2}
    \ar[r]^{\begin{pmatrix}    0&H\\0&0    \end{pmatrix}}
    &
    {\begin{bmatrix}    10\\10    \end{bmatrix}}_{3}
    \ar[r]^{\begin{pmatrix}   0&0 \\ 0&H \\0&0    \end{pmatrix}}
    &
}\]
\[
  \xymatrix@C=1.75cm{
    {\begin{bmatrix}    12\\12\\12    \end{bmatrix}}_{4}
     \ar[r]^{\begin{pmatrix}    H&0&0\\0&0&H    \end{pmatrix}}
    &
    {\begin{bmatrix}    14\\14    \end{bmatrix}}_{5}
     \ar[r]^{\begin{pmatrix}    0    \end{pmatrix}}
    &
    \begin{bmatrix}    16    \end{bmatrix}_{6}
    \ar[r]^{\begin{pmatrix}    H    \end{pmatrix}}
    &
    \begin{bmatrix}    18    \end{bmatrix}_{7}
    }
\]
%\\ L(0,4) 0 L(1,6) H(6,8,1) L(2,8,1)+L(2,8,2) H(8,2,10,1) L(3,10,1)+L(3,10,2) H(10,2,12,2) L(4,12,1)+L(4,12,2)+L(4,12,3) H(12,1,14,1)+H(12,3,14,2) L(5,14,1)+L(5,14,2) 0 L(6,16) H L(7,18)
\vspace{6mm}

$7_4 \eps{15mm}{7_4} $
\[
  \xymatrix@C=1.75cm{
    \begin{bmatrix} 2  \end{bmatrix}_{0}
    \ar[r]^{\begin{pmatrix}    0    \end{pmatrix}}
    &
    {\begin{bmatrix}     4\\4    \end{bmatrix}}_{1}
    \ar[r]^{\begin{pmatrix}     H&0\\0&0\\0&H    \end{pmatrix}}
    &
    {\begin{bmatrix}    6\\6\\6    \end{bmatrix}}_{2}
    \ar[r]^{\begin{pmatrix}    0&H&0\\0&0&0    \end{pmatrix}}
    &
    {\begin{bmatrix}    8\\8    \end{bmatrix}}_{3}
    \ar[r]^{\begin{pmatrix}   0&0 \\ 0&H \\0&0    \end{pmatrix}}
    &
}\]
\[
  \xymatrix@C=1.75cm{
    {\begin{bmatrix}    10\\10\\10    \end{bmatrix}}_{4}
     \ar[r]^{\begin{pmatrix}    H&0&0\\0&0&H    \end{pmatrix}}
    &
    {\begin{bmatrix}    12\\12    \end{bmatrix}}_{5}
     \ar[r]^{\begin{pmatrix}    0    \end{pmatrix}}
    &
    \begin{bmatrix}    14    \end{bmatrix}_{6}
    \ar[r]^{\begin{pmatrix}    H    \end{pmatrix}}
    &
    \begin{bmatrix}    16    \end{bmatrix}_{7}
    }
\]
%\\L(0,2) 0  L(1,4,1)+L(1,4,2) H(4,1,6,1)+H(4,2,6,3) L(2,6,1)+L(2,6,2)+L(2,6,3) H(6,2,8,1) L(3,8,1)+L(3,8,2) H(8,2,10,2) L(4,10,1)+L(4,10,2)+L(4,10,3) -H(10,1,12,1)+H(10,3,12,2) L(5,12,1)+L(5,12,2) 0 L(6,14) H L(7,16)
\vspace{6mm}

$7_5 \eps{15mm}{7_5} $
\[
  \xymatrix@C=1.75cm{
    \begin{bmatrix} -18  \end{bmatrix}_{-7}
    \ar[r]^{\begin{pmatrix}    0\\H    \end{pmatrix}}
    &
    {\begin{bmatrix}     -16\\-16    \end{bmatrix}}_{-6}
    \ar[r]^{\begin{pmatrix}     H&0\\0&0\\0&0    \end{pmatrix}}
    &
    {\begin{bmatrix}    -14\\-14\\-14    \end{bmatrix}}_{-5}
    \ar[r]^{\begin{pmatrix}    0&H&0\\0&0&H\\0&0&0    \end{pmatrix}}
    &
    {\begin{bmatrix}    -12\\-12\\-12    \end{bmatrix}}_{-4}
    \ar[r]^{\begin{pmatrix}   0&0&0 \\ 0&0&H \\0&0&0    \end{pmatrix}}
    &
}\]
\[
  \xymatrix@C=1.75cm{
    {\begin{bmatrix}    -10\\-10\\-10    \end{bmatrix}}_{-3}
     \ar[r]^{\begin{pmatrix}    H&0&0\\0&0&0\\0&0&H    \end{pmatrix}}
    &
    {\begin{bmatrix}    -8\\-8\\-8    \end{bmatrix}}_{-2}
     \ar[r]^{\begin{pmatrix}    0&H&0    \end{pmatrix}}
    &
    \begin{bmatrix}   -6    \end{bmatrix}_{-1}
    \ar[r]^{\begin{pmatrix}    0    \end{pmatrix}}
    &
    \begin{bmatrix}    -4    \end{bmatrix}_{0}
    }
\]
%\\L(-7,-18) H(-18,-16,2) L(-6,-16,1)+L(-6,-16,2) H(-16,1,-14,1) L(-5,-14,1)+L(-5,-14,2)+L(-5,-14, 3) H(-14,2,-12,1)+H(-14,3,-12,2) L(-4,-12,1)+L(-4,-12,2)+L(-4,-12,3) H(-12,3,-10,2) L(-3,-10,1)+L(-3,-10,2)+L(-3,-10,3) H(-10,1,-8,1)+H(-10,3,-8,3) L(-2,-8,1)+L(-2,-8,2)+L(-2,-8,3) -H(-8,2,-6) L(-1,-6) 0 L(0,-4)
\vspace{6mm}

$7_6 \eps{15mm}{7_6}$
\[
  \xymatrix@C=2.25cm{
    \begin{bmatrix} -12  \end{bmatrix}_{-5}
    \ar[r]^{\begin{pmatrix}    H\\0    \end{pmatrix}}
    &
    {\begin{bmatrix}     -10\\-10    \end{bmatrix}}_{-4}
    \ar[r]^{\begin{pmatrix}     0&H\\0&0\\0&0    \end{pmatrix}}
    &
    {\begin{bmatrix}    -8\\-8\\-8    \end{bmatrix}}_{-3}
    \ar[r]^{\begin{pmatrix}    0&0&0\\0&H&0\\0&0&H\\0&0&0    \end{pmatrix}}
    &
    {\begin{bmatrix}    -6\\-6\\-6\\-6    \end{bmatrix}}_{-2}
    \ar[r]^{\begin{pmatrix}   H&0&0&0 \\ 0&0&0&H \\0&0&0&0    \end{pmatrix}}
    &
}\]
\[
  \xymatrix@C=2cm{
    {\begin{bmatrix}    -4\\-4\\-4    \end{bmatrix}}_{-1}
     \ar[r]^{\begin{pmatrix}    0&0&0\\0&0&H\\0&0&0    \end{pmatrix}}
    &
    {\begin{bmatrix}    -2\\-2\\-2    \end{bmatrix}}_{0}
     \ar[r]^{\begin{pmatrix}    0&0&0\\0&H&0    \end{pmatrix}}
    &
    {\begin{bmatrix}   0\\0    \end{bmatrix}}_{1}
    \ar[r]^{\begin{pmatrix}    H&0    \end{pmatrix}}
    &
    \begin{bmatrix}    2    \end{bmatrix}_{2}
    }
\]
%\\L(-5,-12) H(-12,-10,1) L(-4,-10,1)+L(-4,-10,2) H(-10,2,-8,1) L(-3,-8,1)+L(-3,-8,2)+L(-3,-8,3) H(-8,2,-6,2)+H(-8,3,-6,3) L(-2,-6,1)+L(-2,-6,2)+L(-2,-6,3)+L(-2,-6,4) -H(-6,1,-4,1)+H(-6,4,-4,2) L(-1,-4,1)+L(-1,-4,2)+L(-1,-4,3) H(-4,3,-2,2) L(0,-2,1)+L(0,-2,2)+L(0,-2,3) -H(-2,3,0,2) L(1,0,1)+L(1,0,2) H(0,1,2) L(2,2)
\vspace{6mm}

$7_7 \eps{15mm}{7_7}$
\[
  \xymatrix@C=2.25cm{
    \begin{bmatrix} -6  \end{bmatrix}_{-3}
    \ar[r]^{\begin{pmatrix}    H\\0\\0    \end{pmatrix}}
    &
    {\begin{bmatrix}     -4\\-4\\-4   \end{bmatrix}}_{-2}
    \ar[r]^{\begin{pmatrix}     0&H&0\\0&0&H\\0&0&0    \end{pmatrix}}
    &
    {\begin{bmatrix}    -2\\-2\\-2    \end{bmatrix}}_{-1}
    \ar[r]^{\begin{pmatrix}    0&0&0\\0&0&0\\0&0&H\\0&0&0    \end{pmatrix}}
    &
    {\begin{bmatrix}    0\\0\\0\\0    \end{bmatrix}}_{0}
    \ar[r]^{\begin{pmatrix}   H&0&0&0 \\ 0&0&0&0 \\0&0&0&H\\0&0&0&0    \end{pmatrix}}
    &
}\]
\[
  \xymatrix@C=2.25cm{
    {\begin{bmatrix}    2\\2\\2\\2    \end{bmatrix}}_{1}
     \ar[r]^{\begin{pmatrix}    0&H&0&0\\0&0&0&H\\0&0&0&0    \end{pmatrix}}
    &
    {\begin{bmatrix}    4\\4\\4    \end{bmatrix}}_{2}
     \ar[r]^{\begin{pmatrix}    0&0&0\\0&0&H    \end{pmatrix}}
    &
    {\begin{bmatrix}   6\\6    \end{bmatrix}}_{3}
    \ar[r]^{\begin{pmatrix}    H&0    \end{pmatrix}}
    &
    \begin{bmatrix}    8    \end{bmatrix}_{4}
    }
\]
%\\ L(-3,-6) H(-6,-4,1) L(-2,-4,1)+L(-2,-4,2)+L(-2,-4,3) H(-4,2,-2,1)+H(-4,3,-2,2) L(-1,-2,1)+L(-1,-2,2)+L(-1,-2,3) H(-2,3,0,3) L(0,0,1)+L(0,0,2)+L(0,0,3)+L(0,0,4) -H(0,1,2,1)+H(0,4,2,3) L(1,2,1)+L(1,2,2)+L(1,2,3)+L(1,2,4) H(2,2,4,1)+H(2,4,4,2) L(2,4,1)+L(2,4,2)+L(2,4,3) -H(4,3,6,2) L(3,6,1)+L(3,6,2) H(6,1,8) L(4,8,1)
\end{center}

\section{Specializations of the universal theory}

As we have seen, once the universal theory is computed there is a unified convenient way of specializing the theory to any specific choice of link homology theory. We called this specialization \emph{promotion} and described it in chapter \ref{chap:TQFT}. Basically, one takes the special line, promotes it to a module, and then takes the operator $H$ and promotes it to an appropriate operator (matrix). From chapter \ref{chap:TQFT} we have the following ``recipes'' of link homology theories:

\noindent $| \rightarrow \mathbb{Z}[h,t]\bigoplus\mathbb{Z}[h,t]$
\hspace{3mm} $H \rightarrow \left(
\begin{array}{cc} -h&2t \\ 2&h \end{array} \right)$ \hspace{3mm} $\simeq
\frac{\mathbb{Z}[X,h,t]}{X^2=hX+t}$ (Khovanov's ``big'' TQFT)
\\

\noindent $| \rightarrow \mathbb{Z}[t]\bigoplus\mathbb{Z}[t]$
\hspace{3mm} $H \rightarrow \left(
\begin{array}{cc} 0&2t \\ 2&0 \end{array} \right)$ \hspace{3mm}
$\simeq \frac{\mathbb{Z}[X,t]}{X^2=t}$ (Generalized Lee TQFT)
\\

\noindent $| \rightarrow \mathbb{Z}\bigoplus\mathbb{Z}$ \hspace{3mm}
$H \rightarrow \left(
\begin{array}{cc} 0&2 \\ 2&0 \end{array} \right)$ \hspace{3mm}
$\simeq \frac{\mathbb{Z}[X]}{X^2=1}$ (Lee's TQFT)
\\

\noindent $| \rightarrow \mathbb{Z}\bigoplus\mathbb{Z}$ \hspace{3mm}
$H \rightarrow \left( \begin{array}{cc} 0&0 \\ 2&0 \end{array}
\right)$ \hspace{3mm} $\simeq \frac{\mathbb{Z}[X]}{X^2=0}$ (Standard
TQFT)
\\

\noindent $| \rightarrow \mathbb{Z}$ \hspace{3mm} $H \rightarrow 0$
\hspace{3mm} $\simeq$ \emph{reduced} $\frac{\mathbb{Z}[X]}{X^2=0}$
\\

We want to explore the top case, and specialize the variables $h$ and $t$ to take some concrete numerical values. \textbf{Let us assume the ground ring is a field and that $h$ and $t$ take values in that field}. For example, $h=t=0$ is the ``standard'' Khovanov homology theory. Another example would be $h=0~~t=1$, this is Lee's theory \cite{lee1}.
\\

Here is an obvious proposition:
\begin{proposition}
If the ground ring is a field of characteristic 2 (say $\mathbb{Z}_2$) then any homology theory with $h=0$ is equivalent to the ``standard'' Khovanov homology theory (i.e. has the same chain complex).
\end{proposition}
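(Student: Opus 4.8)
The cleanest route is through the promotion description of link homology theories from Chapter~\ref{chap:TQFT}, not through a direct comparison of the cubes of resolutions: the latter genuinely differ, since the comultiplication of $\mathbb{Z}_2[X]/(X^2-t)$ carries the extra term $t(1\otimes 1)$ in $\Delta(X)$ that is absent for $\mathbb{Z}_2[X]/X^2$. Instead, recall that every theory in sight is produced from the \emph{single} reduced universal complex of Chapter~\ref{chap:ComplexReductionZ} --- columns of the special line, differential a matrix of monomials in $H$ with integer coefficients --- by a promotion, i.e.\ by choosing a module to place on the special line and a matrix to substitute for $H$. For $\calF_{ht}$ over a ground ring $R$ the promotion is $|\mapsto R\oplus R$ (in the basis $v_+,v_-$ with the prescribed degree shifts) and $H\mapsto\left(\begin{smallmatrix}-h&2t\\2&h\end{smallmatrix}\right)$, the $2\times2$ representation of the $1$-handle operator computed in Chapter~\ref{chap:TQFT}; for the standard Khovanov TQFT $\mathbb{Z}_2[X]/X^2$ it is $|\mapsto \mathbb{Z}_2\oplus\mathbb{Z}_2$ and $H\mapsto\left(\begin{smallmatrix}0&0\\2&0\end{smallmatrix}\right)$.

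Now I would specialize: let the ground ring be a field $k$ with $\mathrm{char}\,k=2$ and set $h=0$, leaving $t$ either a formal variable over $k$ or any fixed element of $k$. In the matrix $\left(\begin{smallmatrix}-h&2t\\2&h\end{smallmatrix}\right)$ the diagonal entries $\pm h$ vanish by hypothesis and the off-diagonal entries $2$ and $2t$ vanish because $2=0$ in $k$, so the $H$-promotion matrix of the $h=0$ theory is the zero matrix; likewise $\left(\begin{smallmatrix}0&0\\2&0\end{smallmatrix}\right)$ becomes zero over $k$. Hence over $k$ both theories are obtained from the same universal complex by the same promotion data, $|\mapsto k\oplus k$ and $H\mapsto 0$ (and the reduced versions agree as well, being cut out by the bottom-right entry, which is $0$ in both cases). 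Since each entry $cH^m$ of a differential goes to $c\cdot 0=0$ when $m\geq1$ and to $c\cdot\mathrm{id}$ when $m=0$, and this is the same for both promotions, feeding identical data into an identical complex produces literally identical chain complexes, which settles the claim at the level of the reduced universal complex.

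The last step is to transport this equality back to the ordinary chain complexes: by the remark in Chapter~\ref{chap:TQFT} the complex obtained from a promotion is homotopy equivalent --- via the delooping plus Gaussian-elimination reduction, which is an iterate of the reduction lemma and hence a homotopy equivalence --- to the complex obtained by applying the corresponding TQFT to $\Kh(K)$ itself. Composing these two homotopy equivalences with the equality of reduced complexes just established shows that over a field of characteristic $2$ the chain complex of any $h=0$ TQFT is homotopy equivalent to, and in particular has the same homology as, the standard Khovanov chain complex; when $k$ is in addition perfect (for instance $k=\mathbb{Z}_2$) the change of basis $X\mapsto X+\sqrt t$ upgrades this to an honest isomorphism of complexes. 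The one point requiring care --- and the only real obstacle --- is precisely this transport: one must resist asserting that the cubes of resolutions themselves coincide (they do not), and instead run the argument at the level of the reduced universal complex, which is exactly where the vanishing of $2$ does its work.
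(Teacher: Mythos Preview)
Your proof is correct and takes essentially the same approach as the paper: both observe that over a field of characteristic $2$ with $h=0$, the promotion matrix $\left(\begin{smallmatrix}-h&2t\\2&h\end{smallmatrix}\right)$ collapses to the zero matrix regardless of $t$, so the promotion data coincide with that of the standard theory. The paper's proof is a single sentence (``No matter what $t$ is, all of these promotions are exactly the same promotions''), whereas you have unpacked this more carefully --- distinguishing the cube level from the reduced level, and noting the transport via homotopy equivalence --- which is a welcome elaboration but not a different argument.
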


\begin{proof}
No matter what $t$ is, all of these promotions are exactly the same promotions.

\qed
\end{proof}

Here is a less obvious generalization:
\begin{theorem}\label{promotionthm}
If the ground ring is a field and $h^2+4t=0$ then \emph{all} promotion of the above type are equivalent to the standard Khovanov homology theory (i.e. give rise to homotopic chain complexes). All homology theories with $h^2+4t\neq0$ are equivalent.
\end{theorem}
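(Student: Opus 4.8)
The plan is to reduce everything to the single two-variable theory $\calF_{ht}$, since by construction each ``promotion of the above type'' is $\calF_{ht}$ with $h$ and $t$ specialized to elements of the ground field $k$ (possibly followed by passing to a summand, as in the reduced case). The only input needed is the computation already recorded in chapter~\ref{chap:TQFT}: in $\calF_{ht}$ the $1$-handle operator is $H=2X-h$, so the $2$-handle operator is the ground-ring element $T=H^2=h^2+4t$. Since $k$ is a field, $h^2+4t$ is either $0$ or a unit; these are exactly the two clauses, and after promotion $H$ then acts either as a square-zero operator or as an invertible one. Throughout we use that the reduced universal complex of chapter~\ref{chap:ComplexReductionZ} depends on a promotion only through the chosen module and the chosen action of $H$ (its differentials being matrices of monomials in $H$), so that an isomorphism of Frobenius algebras induces an isomorphism of the associated complexes, and a twist of a Frobenius algebra leaves its complex unchanged up to isomorphism (chapter~\ref{chap:TQFT}).

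Assume first $h^2+4t=0$, i.e. $T=H^2=0$; geometrically this says genus-$\ge 2$ surfaces die, which is precisely the hypothesis that produces standard Khovanov homology (chapter~\ref{chap:TQFT}). When $\operatorname{char}k\neq 2$ this can be made literal: the change of basis $X\mapsto X-h/2$ turns $X^2=hX+t$ into $(X-h/2)^2=t+h^2/4=0$, and a one-line check shows that the comultiplication and product of $\calF_{ht}$ become exactly those of the standard Khovanov Frobenius algebra $k[X]/X^2$; hence $\calF_{ht}$ and the standard theory give isomorphic complexes. Equivalently, after promotion $H$ acts on the rank-two module by a $2\times 2$ matrix of trace $0$ and determinant $-(h^2+4t)=0$, hence by a nilpotent square-zero matrix, and any non-zero such matrix is $GL_2(k)$-conjugate to the one coming from $k[X]/X^2$. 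When $\operatorname{char}k=2$, $h^2+4t=h^2=0$ forces $h=0$ and the statement is the preceding proposition.

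Now assume $h^2+4t\neq 0$, so $T=H^2$ is a unit of $k$. For $\operatorname{char}k\neq 2$ the same change of basis $X\mapsto X-h/2$ carries $\calF_{ht}$ to $\calF_{\tilde T}$ with $\tilde T=(h^2+4t)/4\in k^{\times}$ --- exactly the ``kicking out of the redundant variable'' carried out over $\mathbb{Q}$ in chapter~\ref{chap:TQFT} --- so it remains to compare the theories $\calF_{\tilde T}$, $\tilde T\in k^{\times}$, among themselves. Rescaling $X\mapsto\mu X$ identifies $\calF_{\tilde T}$ with $\calF_{\mu^{-2}\tilde T}$ up to a twist of the Frobenius structure, so the complex of $\calF_{\tilde T}$ depends on $\tilde T$ only through its class in $k^{\times}/(k^{\times})^2$; over an algebraically closed field that class is trivial and every $\calF_{\tilde T}$ coincides, up to isomorphism, with Lee's theory $\calF_1$. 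To descend to an arbitrary $k$ one uses flat base change: $\calF_{\tilde T}\Kh(K)\otimes_k\bar{k}\cong\calF_1\Kh(K)\otimes_k\bar{k}$, so $\calF_{\tilde T}\Kh(K)$ and $\calF_1\Kh(K)$ have equal-dimensional homology over $k$ in every homological degree and for every $\tilde T\in k^{\times}$; since a bounded complex of finite-dimensional $k$-vector spaces is determined up to homotopy equivalence by the dimensions of its homology, all the complexes $\calF_{\tilde T}\Kh(K)$ are homotopy equivalent. In characteristic $2$ with $h\neq 0$ one first rescales $X\mapsto hX$ to reach $k[X]/(X^2-X-t/h^2)$, which splits over $\bar{k}$ by Artin--Schreier, so again every such theory becomes Lee's after base change and the same argument applies.

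The only real obstacle is in the second clause. Over a field with non-trivial square classes there is in general no chain-level isomorphism $\calF_{\tilde T}\Kh(K)\cong\calF_{\tilde T'}\Kh(K)$ when $\tilde T/\tilde T'$ is a non-square --- producing one would require adjoining $\sqrt{\tilde T/\tilde T'}$ --- so ``equivalent'' must be read either up to base change or, as above, through the homology-dimension argument, which itself relies on $h^2+4t$ being a unit making the Frobenius algebra separable (distinct roots if $\operatorname{char}k\neq 2$, Artin--Schreier if $\operatorname{char}k=2$) and hence split after base change. Everything else is bookkeeping around the single change of basis $X\mapsto X-h/2$ and the identity $T=H^2=h^2+4t$ already in hand.
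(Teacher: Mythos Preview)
Your argument is correct and, for the first clause, follows the same line as the paper: the paper phrases it as putting the $2\times 2$ matrix $H$ into Jordan form when $h^2+4t=0$, while you phrase it as the change of basis $X\mapsto X-h/2$ in the Frobenius algebra; these are the same computation, and both reduce to the previous proposition in characteristic~$2$.

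For the second clause the paper is terser: it simply observes that $h^2+4t\neq 0$ makes $H$ invertible and asserts that all theories with invertible $H$ are equivalent, deferring the justification to the subsequent section, where the diagonalization of $H$ (and hence of the multiplication) forces the complex to collapse to $2^n$ generators with zero differentials. Your route is different and in some ways more self-contained: you pass to the algebraic closure, where the algebra splits (via square roots or Artin--Schreier), compare homology dimensions, and then invoke that bounded complexes of finite-dimensional vector spaces are determined up to homotopy by their homology. This buys you a proof that does not rely on the later structural proposition, at the cost of introducing a base-change step. Your final paragraph correctly isolates the square-root obstruction to a literal chain isomorphism over non-closed fields; the paper makes the same point, but only in the remark following the proof.
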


\begin{proof}
The expression $h^2+4t$ is (minus) the determinant of $H$ and thus determine its invertibility property. The characteristic polynomial of $H$ is $\lambda^2 - (h^2+4t)$ and thus this expression also determines the eigenvalues (and whether $H$ is diagonalizable). If $h^2+4t=0$ and the field is of characteristic $2$ then the theory is equivalent to the ``standard'' theory by the previous proposition. If $2$ is invertible then $H$ is neither invertible nor diagonalizable but still can be brought to its Jordan form. This form is $\frac{1}{2}$ times the ``standard'' promotion and thus give rise to homotopic complexes (via scaled change of basis). In the case $h^2+4t \neq 0$, when working over characteristic $2$, we immediately get $H$ invertible (and diagonal). When $2$ is invertible, we get that $H$ is invertible (and possibly diagonalizable if $h^2+4t=a^2$). All theories with $H$ invertible are equivalent. The structure of such theories will be explored in the next section and we refer the reader there for more details.
\qed
\end{proof}

\vspace{3mm}
\begin{remark}
In the case where $2$ is invertible, the universal theory can be reduced to a theory involving only the 2-handle operator $T$. When considering Khovanov's ``big'' universal TQFT, the operator $T$ is promoted to $h^2+4t$ times the identity matrix (just calculate $H^2$). If $h^2+4t=0$ then $T=0$ we reduce back to the ``standard'' TQFT. If $2$ is invertible and $h^2+4t=a \neq 0$ then we get a second type of theory (equivalent to Lee's theory). It does not matter what $a$ is equal to, the theories are invertible multiple of one another.
\end{remark}

\begin{remark}
These results re-produce and generalize results from \cite{turner1}. It is important to mention that if one wants to create a change of variables (and twists) between equivalent link homology theories with different $a$'s (say $a$ and $a'$) one has to demand that $\sqrt{\frac{a}{a'}}$ is an element of our ground field as well. This is a key component in the \emph{twist} of the Frobenius algebras \cite{turner1}. Thus it seems that one can find equivalent link homology theories such that the Frobenius algebras are not related by (at least an obvious) twist or a change of variables.
\end{remark}

\subsection{$H$ diagonalization and projections}
We now continue our discussion of the various homology theories ``projected'' from the our universal theory. As seen above, the expression $detH=-h^2-4t$ determines the type of promotion we get. Whenever $h^2+4t=a^2\neq 0$ the operator $H$ is diagonalizable (or already diagonal when the characteristic is $2$). This has implications on the structure of the complex and the homology groups.
\\

We remind the reader we work over a field. We say that the multiplication (or-co-multiplication) in a Frobenius algebra is \emph{diagonal} if there exists a basis to the algebra such that the multiplication table using that basis is diagonal and $a\times a$ is proportional to $a$ for all basis elements $a$. The following lemma applies in our Frobenius algebra:

\begin{lemma}
The 1-handle operator $H$ is diagonal (using a certain basis) iff the multiplication $m$ is diagonal (with the same basis) iff the co-multiplication $\Delta$ is diagonal.
\end{lemma}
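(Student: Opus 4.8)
The plan is to work in the concrete Frobenius algebra $A = k[X]/(X^2 - hX - t)$ over a field $k$, where by the earlier computation of the handle operators we have $H = 2X - h$ acting by multiplication, and to translate every statement about $H$, $m$, $\Delta$ into the language of this two-dimensional algebra. The three conditions are all ``there exists a basis of $A$ in which a certain bilinear/linear structure is diagonal,'' so the natural strategy is to show that a single basis works for all three simultaneously, namely a basis of eigenvectors of $H$ (equivalently, a basis of idempotent-like elements of $A$).

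First I would prove the equivalence ``$H$ diagonal $\iff$ $m$ diagonal.'' Since $H$ acts as multiplication by the algebra element $h_0 := 2X - h$, a basis $\{e_1, e_2\}$ diagonalizes $H$ exactly when each $e_i$ is an eigenvector: $h_0 e_i = \lambda_i e_i$. Now I claim that in this basis the multiplication is diagonal. The point is that $A$ is spanned by $1$ and $h_0$ (as long as $2$ is invertible; in characteristic $2$ the lemma is vacuous or one uses that $m$ is already diagonal as noted in the previous remarks, so I would dispatch that case separately at the start), so $e_j \in \operatorname{span}(1, h_0)$, hence $e_i e_j \in \operatorname{span}(e_i, h_0 e_i) = \operatorname{span}(e_i) = k e_i$ using the eigenvector relation — wait, this needs $e_i\cdot 1 = e_i$ and $e_i \cdot h_0 = \lambda_i e_i$, both of which hold. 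So $e_i e_j \in k e_i$, and by symmetry $e_i e_j \in k e_j$; if $i \neq j$ and $e_i, e_j$ are independent this forces $e_i e_j = 0$, and $e_i e_i \in k e_i$, which is exactly the definition of diagonal multiplication. Conversely, if $m$ is diagonal in a basis $\{e_i\}$ with $e_i e_j = \delta_{ij} c_i e_i$, then multiplication by any algebra element — in particular by $h_0$ — is diagonal in this basis, since $h_0 = \sum \mu_i e_i / (\text{appropriate normalization})$ and $h_0 e_j = \mu_j e_j$ (scaling $e_i$ so that $e_i^2 = e_i$ where $c_i \neq 0$; the degenerate $c_i = 0$ case is handled by noting $e_i$ is then a nilpotent spanning the radical, which also sits in an eigenspace of $h_0$). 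Thus $H$ is diagonalized by the same basis.

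Next I would handle ``$m$ diagonal $\iff$ $\Delta$ diagonal.'' Here I would use the Frobenius structure: the counit $\varepsilon$ and the nondegenerate trace form $(x,y) \mapsto \varepsilon(xy)$ relate $\Delta$ to $m$ via the standard formula $\Delta(x) = \sum_i (x e_i^*) \otimes e_i$ where $\{e_i^*\}$ is the dual basis of $\{e_i\}$ with respect to the Frobenius form. If $\{e_i\}$ diagonalizes $m$, one computes that the dual basis is, up to scalars, $\{e_i\}$ itself (an orthogonal idempotent basis is self-dual up to the normalization constants $\varepsilon(e_i^2)$), so $\Delta(e_j) = \sum_i (e_j e_i^*) \otimes e_i$ collapses to a multiple of $e_j \otimes e_j$, i.e. $\Delta$ is diagonal in the same basis. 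The converse is symmetric, using that $m$ and $\Delta$ play interchangeable roles under the Frobenius duality (or simply applying the same argument to the opposite Frobenius algebra). Chaining the two equivalences gives the lemma.

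The main obstacle I expect is the careful bookkeeping of the degenerate cases — when $H$ has a repeated eigenvalue (so $\det H = -h^2 - 4t = 0$), $A$ may fail to be semisimple, one of the ``idempotents'' is actually nilpotent, and ``diagonal'' has to be interpreted correctly (a nilpotent basis element $n$ with $n^2 = 0$ still counts as $n \cdot n$ proportional to $n$). I would want to check that in precisely these cases $H$ is in fact \emph{not} diagonalizable (consistent with Theorem \ref{promotionthm}), so the three conditions of the lemma fail together and there is nothing to prove; the substance of the lemma is entirely in the diagonalizable regime $h^2 + 4t \neq 0$, where the eigenbasis argument above goes through cleanly. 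The characteristic-$2$ subtlety (where $H$ can already be diagonal with $h = 0$ giving the standard theory) should be noted but, as the earlier propositions show, causes no real trouble.
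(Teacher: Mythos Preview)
The paper does not actually prove this lemma; it is stated and immediately used in the proof of the subsequent proposition, with no argument given. Your proposal therefore supplies what the paper omits.

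Your argument is correct in substance. The key observation for ``$H$ diagonal $\Rightarrow$ $m$ diagonal'' --- that when $2$ is invertible $A$ is spanned by $1$ and $h_0 = 2X-h$, so any eigenvector $e_i$ of multiplication-by-$h_0$ automatically satisfies $e_i e_j \in k e_i \cap k e_j$ --- is clean and is exactly the right idea. The converse (any diagonal multiplication makes every multiplication operator, in particular $H$, diagonal) is immediate and you could state it more simply: if $e_i e_j = \delta_{ij} c_i e_i$ then for any $a = \sum \mu_k e_k$ one has $a e_j = \mu_j c_j e_j$, no normalization or nilpotent bookkeeping needed. The $m \Leftrightarrow \Delta$ step via Frobenius self-duality of an orthogonal idempotent basis is the standard and correct route.

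Your instinct about the degenerate cases is also sound. The lemma as literally phrased (``same basis'') is delicate in characteristic $2$, since there $H = h$ is scalar and hence diagonal in \emph{every} basis, while $m$ certainly is not; the paper glosses over this, but in the only application (the proposition with $h^2 + 4t = a^2 \neq 0$) one is in the regime where $X^2 - hX - t$ splits with distinct roots, and there your eigenbasis argument goes through without any caveats. So your handling --- dispatch characteristic $2$ separately, note that when $h^2+4t=0$ all three conditions fail together, and run the eigenvector argument in the remaining case --- is the right shape for a complete proof.
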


Using this lemma we have:

\begin{proposition}
If the ground ring is a filed and $h^2+4t=a^2\neq 0$ then the complex is built from $2^n$ generators (where $n$ is the number of link components) and vanishing maps.
\end{proposition}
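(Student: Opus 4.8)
Since we work over a field $F$, I would first reduce the statement to a dimension count. By the reduction of Chapter~\ref{chap:ComplexReductionZ} together with the promotion $H\mapsto\left(\begin{smallmatrix}-h&2t\\2&h\end{smallmatrix}\right)$, the chain complex $\calF_{ht}\Kh(K)$ is a bounded complex of finite--dimensional $F$--vector spaces. Any nonzero differential of such a complex has a nonzero scalar entry, which over a field is automatically invertible, hence an isomorphism between two one--dimensional summands; the reduction lemma of~\cite{ba3} then cancels this pair and strictly decreases the number of generators. Iterating until every differential vanishes exhibits the complex as homotopy equivalent to one with all maps zero and with exactly $\dim_F H^\ast\bigl(\calF_{ht}\Kh(K)\bigr)$ one--dimensional generators. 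So the proposition is equivalent to the assertion $\dim_F H^\ast\bigl(\calF_{ht}\Kh(K)\bigr)=2^n$, and the hypothesis $h^2+4t=a^2\neq 0$ must be used precisely there.

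Next I would pin down the Frobenius algebra. The quantity $h^2+4t$ is the discriminant of $X^2-hX-t$; since it is the nonzero square $a^2$ in $F$, this polynomial splits with distinct roots $\frac{h\pm a}{2}$ in $F$ (in characteristic $2$ one notes instead that the promoted $H$ is already the invertible scalar $h$, and passes to a splitting field as below). Hence $A\cong F\times F$ as a Frobenius algebra, and by the lemma preceding the proposition the product $m$ and the coproduct $\Delta$ are diagonal in the idempotent basis, with $H$ becoming $\operatorname{diag}(a,-a)$. Applying this change of basis throughout the promoted complex makes every differential block diagonal with respect to the two idempotents, so the complex splits as a direct sum $C_{+}\oplus C_{-}$, each summand being the ``one--dimensional'' TQFT ($A=F$) applied to the cube of resolutions.

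It remains to count, in this split picture, the generators that survive, and this is the step I expect to be the real obstacle. In each summand the saddle maps act on the finitely many colourings of the circles of a given resolution (one of the two idempotents per circle): merging two equally coloured circles returns that colour up to a unit, merging two differently coloured circles gives zero, and splitting copies the colour up to a unit. This is exactly the combinatorial setting of Lee's deformation, and Lee~\cite{lee1} (in the form later used for the Rasmussen invariant referred to above) showed that the resulting homology has a basis indexed by the orientations of the $n$--component link $K$: each component may be oriented in two ways, giving $2^n$ classes, each carried by the oriented (Seifert) resolution with its circles coloured by rotation sign. Granting this, $\dim_F H^\ast=2^n$, and with the first paragraph the promoted universal complex is homotopy equivalent to one with $2^n$ one--dimensional generators and vanishing maps. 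A reader who prefers not to reprove Lee's count may instead invoke Theorem~\ref{promotionthm}, which places this theory among the mutually equivalent ``$h^2+4t\neq 0$'' theories with the equivalence to Lee's theory realised over $F$ (since $a\in F$), so the value $2^n$ is inherited; for characteristic $2$ the same follows by flat base change to a splitting field of $X^2-hX-t$, over which $A$ becomes $F'\times F'$ and the split analysis applies. In either route the genuine content is the number $2^n$ and not the vanishing of the maps: the geometric reduction of the thesis delivers the collapse of the differentials over any field and for any $h,t$, whereas the value $2^n$ is Lee's theorem and must be cited or re-derived from the colouring bookkeeping above.
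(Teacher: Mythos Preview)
Your argument is correct and follows the same route as the paper: diagonalize the Frobenius algebra using the hypothesis that the discriminant $h^2+4t$ is a nonzero square, then invoke Lee's orientation-counting argument (the paper likewise cites \cite{lee1} and \cite{turner1}, and additionally points to the geometric Karoubi-envelope variant in \cite{ba6}) for the $2^n$ generators. Your opening reduction to a pure dimension count via Gaussian elimination and your characteristic-$2$ handling by base change to a splitting field (separability from $h\neq 0$) are both a bit more careful than the paper, which simply notes that the promoted $H$ is already diagonal in that case; one small slip is the claim that the complex splits as a two-piece sum $C_+\oplus C_-$---the idempotent decomposition is finer than that---but your next paragraph has the correct colouring combinatorics, so the conclusion is unaffected.
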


\begin{proof}
The condition for diagonalizing $H$ is $h^2+4t=a^2\neq 0$. The eigenvectors are $V_a=\left(\begin{array}{c} \frac{-(h-a)}{2} \\ 1 \end{array} \right) \hspace{3mm} V_{-a}=\left(\begin{array}{c} \frac{-(h+a)}{2} \\ 1 \end{array} \right)$. Note that if $2=0$, our operator $H$ is already diagonal using the standard basis ($1$ and $X$). After we diagonalize the operator $H$ the multiplication in the algebra is automatically diagonal as well. This allows for the arguments in \cite{lee1} and ~\cite{turner1} to be used, in order to prove that the complex is homotopic to one composed of $2^n$ generators and vanishing maps (where $n$ is the number of link components). We refer the reader to these papers for the proof. The process also allows to determine the \emph{homological} degrees of the surviving homology generators. \emph{In case of a knot it is always in degree zero}. Note that a local geometric prove exists and described in \cite{ba6}. Once we have the eigenvectors we can construct projection operators onto the eigenspaces. These projections are represented in the geometric category by certain cobordisms (maps). These projections are then used in order to extend the cobordism category into the Karoubi envelope (see \cite{ba6} for full details on this technique). The rest of the geometric proof can be found in \cite{ba6}.

\qed
\end{proof}

In fact:

\begin{proposition} \label{prop:2^n}
If the ground ring is a field and $h^2+4t=a\neq0$ then the complex is built from $2^n$ generators (where $n$ is the number of link components).
\end{proposition}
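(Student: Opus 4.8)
The plan is to deduce the statement from the previous proposition by extending scalars to a quadratic extension of the ground field, the genuinely new content being the case in which $h^{2}+4t$ is not a square in $k$. Throughout, write $C=\calF_{ht}\Kh(L)$ for the complex of $k$-vector spaces obtained by applying $\calF_{ht}$ to the geometric complex (equivalently, the $H$-promotion of chapter \ref{chap:TQFT}), with $k$ the ground field and $h,t\in k$ such that $a:=h^{2}+4t\neq 0$. Since over a field a bounded complex is homotopy equivalent to its homology equipped with the zero differential, the content to be proved is that $H^{\ast}(C)$ is a $k$-vector space of dimension $2^{n}$; this is the precise meaning of ``built from $2^{n}$ generators'' here (after a base change that splits $A_{ht}$ one indeed sees $2^{n}$ one-dimensional summands, which is why the previous proposition can moreover assert vanishing maps while here we only claim the count).

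First I would reduce to the situation of the previous proposition. Put $k':=k[\sqrt a]$. If $a$ is already a square in $k$ — which in particular holds whenever $\operatorname{char}k=2$, since there $a=h^{2}+4t=h^{2}$ — then $k'=k$ and the previous proposition applies verbatim, giving $\dim_{k}H^{\ast}(C)=2^{n}$. Otherwise $\operatorname{char}k\neq 2$, the polynomial $\lambda^{2}-a$ is irreducible over $k$, and $k'/k$ is a separable field extension of degree $2$ in which $a=(\sqrt a)^{2}$ is a nonzero square. Because $\calF_{ht}$ is a tensorial functor, extension of scalars commutes with the construction of the complex: $C\otimes_{k}k'$ is exactly the complex assigned to $L$ by $\calF_{ht}$ over $k'$ for the same elements $h,t$. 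Hence the previous proposition, applied over $k'$, shows that $C\otimes_{k}k'$ is homotopy equivalent to a complex with $2^{n}$ generators and vanishing maps, so $\dim_{k'}H^{\ast}(C\otimes_{k}k')=2^{n}$.

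Next I would descend from $k'$ to $k$. A field extension is free, hence flat, so $-\otimes_{k}k'$ is exact and $H^{\ast}(C\otimes_{k}k')\cong H^{\ast}(C)\otimes_{k}k'$; comparing $k'$-dimensions and using $\dim_{k'}(V\otimes_{k}k')=\dim_{k}V$ gives $\dim_{k}H^{\ast}(C)=\dim_{k'}H^{\ast}(C\otimes_{k}k')=2^{n}$, which is the proposition. If a geometric formulation is wanted, one can instead run the reduction lemma of \cite{ba3} already over $k$: the hypothesis $a\neq 0$ says exactly that $A_{ht}=k[X]/(X^{2}-hX-t)$ is \'etale — it is $k\times k$ when $a$ is a nonzero square and the quadratic field $k'$ otherwise — so in the $\Mat$-category built from copies of the special object every nonzero morphism is invertible, iterated cancellation produces a minimal complex with vanishing differentials, and the base change above pins its total $k$-dimension to $2^{n}$.

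I expect the only real obstacle to be precisely the case the previous proposition misses, $\operatorname{char}k\neq 2$ with $h^{2}+4t$ a non-square, where $A_{ht}$ is a genuine quadratic field that cannot be split over $k$; the accompanying bookkeeping — that ``number of generators'' is measured by $\dim_{k}H^{\ast}$ and is therefore unchanged by the flat inclusion $k\hookrightarrow k'$ — is routine, as are the compatibility of a tensorial functor with base change and the fact that complexes over a field are formal. As a sanity check, for a knot ($n=1$) this yields a two-dimensional homology concentrated in homological degree $0$, matching the universal complexes tabulated above once $H$ is specialized to an invertible matrix.
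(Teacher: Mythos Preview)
Your argument is correct, but it is a genuinely different route from the paper's. The paper does not pass to a quadratic extension; instead, it observes that when $2$ is invertible the condition $h^{2}+4t\neq 0$ makes the promoted operator $H$ invertible, so the theory is equivalent (in the sense of Theorem~\ref{promotionthm}) to Lee's theory, and then simply cites Lee's result \cite{lee1} for the $2^{n}$ generators; the characteristic-$2$ case is handled exactly as you do, by noting $a=h^{2}$ is already a square. Your faithfully-flat-descent argument via $k'=k[\sqrt a]$ has the advantage of being self-contained within the paper: it reduces directly to the immediately preceding proposition rather than to an external reference, and it makes transparent that the only issue is whether $A_{ht}$ splits over $k$. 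The paper's approach is shorter but leans on the earlier equivalence-of-theories theorem and on Lee's paper; yours trades that for the standard fact that homology commutes with flat base change.
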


\begin{proof}
Although when $h^2+4t=a\neq0$ the operator $H$ is not necessarily diagonalizable, it is invertible. Thus when $2$ is invertible, such a theory is equivalent to Lee's theory. We know that for Lee's theory, when $2$ is invertible, the result hold (see \cite{lee1} for example). When $2=0$ the condition means that $H$ is diagonal and $\sqrt{a}$ is in the ground field, thus we are back to the case of the previous proposition..

\qed
\end{proof}

\vspace{3mm}

So far we have taken the universal theory and explored certain ``projections'' of (i.e. certain homology theories put on it). Using the results on various specializations we can get results regarding the structure of the universal theory (that is the geometric complex):

\begin{theorem}\label{ras}
The universal complex, in case of a knot, always has \emph{exactly} one complex component which looks like $0\rightarrow|\rightarrow0$. This component is in homological degree $0$.
\end{theorem}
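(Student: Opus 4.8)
The plan is to read off the numerical shape of the universal complex of a knot from the generalized Lee specialization, and then to upgrade that numerical information to a genuine splitting of complexes.

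\textbf{Reduction to generalized Lee homology.} First I would fix the universal complex $C=(\Omega^\bullet,d^\bullet)$ of the given knot $K$, built in Chapter~\ref{chap:ComplexReductionZ}: each $\Omega^i$ is a direct sum of $n_i$ copies of the special line $|$, and each $d^i$ is a matrix whose entries are monomials $a_{jk}H^{m_{jk}}$ with $a_{jk}\in\mathbb{Z}$, the monomiality being forced by the grading. I would then base-change the generalized Lee theory $\calF_T$ to the field $\mathbb{Q}(\sqrt{T})$; there the Frobenius algebra splits as a product of two eigenlines on which $H=2X$ acts by the scalars $\pm 2\sqrt{T}$, so the complex obtained by applying this specialization to $C$ is the direct sum of the two complexes of $\mathbb{Q}(\sqrt{T})$-vector spaces gotten from $C$ by setting $H=2\sqrt{T}$ and $H=-2\sqrt{T}$. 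By Proposition~\ref{prop:2^n}, together with the fact (recorded in its proof) that for a \emph{knot} all surviving generators sit in homological degree $0$, this Lee homology is $2$-dimensional and concentrated in homological degree $0$.

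\textbf{Pinning the ranks over $\mathbb{Q}(H)$.} Since $\sqrt{T}$ is transcendental over $\mathbb{Q}$, the field $\mathbb{Q}(\sqrt{T})$ is $\mathbb{Q}$-isomorphic to the field of fractions $\mathbb{Q}(H)$ of $\mathbb{Z}[H]$ via $H\mapsto 2\sqrt{T}$ (and also via $H\mapsto -2\sqrt{T}$), and under either isomorphism the corresponding summand above is identified with $C\otimes_{\mathbb{Z}[H]}\mathbb{Q}(H)$. Hence $2\dim_{\mathbb{Q}(H)}H^i\bigl(C\otimes\mathbb{Q}(H)\bigr)=\dim_{\mathbb{Q}(\sqrt{T})}H^i(\text{Lee})$, which is $2$ for $i=0$ and $0$ otherwise, so $H^\bullet(C\otimes\mathbb{Q}(H))$ is one-dimensional and sits in homological degree $0$. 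Concretely, every nonzero entry of every $d^i$ is a unit of $\mathbb{Q}(H)$, so iterated Gaussian elimination (the reduction lemma of Section~\ref{section:algorithm}) collapses $C\otimes\mathbb{Q}(H)$ to a complex with vanishing differentials, and the computation just made says this collapsed complex is $\mathbb{Q}(H)$ placed in degree $0$.

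\textbf{From $\mathbb{Q}(H)$ to an integral summand.} It remains to promote this to the statement that a single copy of $|$ splits off $C$, as a complex in $\Komh\Mat(\Cobl)$, in homological degree $0$, and that no other copy of $|$ does. Uniqueness and the degree are immediate, because one copy of $|$ acquires a $2$-dimensional promotion under the Lee specialization: an isolated $|$ in degree $i$ would contribute a $2$-dimensional summand to the Lee homology in degree $i$, and two isolated $|$'s in degree $0$ would force it to be at least $4$-dimensional, both contradicting the first step. For existence I would first split the generator off over $\mathbb{Q}[H]$ — a principal ideal domain, over which the one-dimensionality of $H^0(C\otimes\mathbb{Q}(H))$ and the torsion-freeness of the $\Omega^i$ yield a primitive cocycle $g\in\Omega^0\otimes\mathbb{Q}[H]$ with $d^0g=0$ spanning a free rank-one direct summand whose complement carries a homotopy equivalent complex — and then descend the splitting to $\mathbb{Z}[H]$; alternatively, and probably more cleanly, I would invoke the projector description of the Rasmussen generator in \cite{ba6}, where the eigenprojections of $H$ are adjoined to $\Cobl$ and the knot complex is exhibited as a single through-degree-$0$ strand plus a remainder that becomes acyclic once $H$ is inverted.

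\textbf{The main obstacle} is precisely this last descent. Over $\mathbb{Z}[H]$, which is not a principal ideal domain, a primitive cocycle need not generate a direct summand, and the reduction lemma cancels only \emph{unit} (identity-curtain) entries, whereas the reduced universal complex genuinely contains non-unit monomial entries such as $2,\ H,\ 2H,\dots$ — the entries equal to $2$ being, after $H\mapsto 0$, exactly the source of the $2$-torsion in standard Khovanov homology. One must therefore show that none of these obstruct peeling off the degree-$0$ strand, and I expect the Karoubi-envelope/projector argument of \cite{ba6} to be the right tool for this rather than a direct matrix manipulation.
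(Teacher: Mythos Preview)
Your strategy is exactly the paper's ``turning on and off'': specialize to a Lee-type theory, use Proposition~\ref{prop:2^n} to see that for a knot the homology is two-dimensional and sits in homological degree~$0$, and then pull this back to force a single isolated $|$ in degree~$0$. The paper's entire existence argument is the clause ``these generators could only have come from a complex component $0\rightarrow|\rightarrow0$'', and uniqueness is argued precisely as you do (a second isolated $|$ would over-count Lee generators). Your detour through $\mathbb{Q}(\sqrt{T})\cong\mathbb{Q}(H)$ is just a more explicit packaging of the same input.

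Where you go beyond the paper is in worrying about the descent to $\mathbb{Z}[H]$, and you are right to flag it. A complex such as
\[
0\;\longrightarrow\;\mathbb{Z}[H]\;\xrightarrow{\ (2,\,H)^{T}\ }\;\mathbb{Z}[H]^{2}\;\longrightarrow\;0
\]
has the correct Lee homology under every field promotion with $H$ invertible, yet $\mathbb{Z}[H]^{2}/(2,H)\mathbb{Z}[H]$ has no free rank-one summand, so no isolated $|$ splits off. The paper's one-line justification does not exclude such shapes; it is a heuristic, consistent with all the computed examples and with the Rasmussen picture, rather than a formal argument over $\mathbb{Z}[H]$. Your route over $\mathbb{Q}[H]$ (Smith normal form over a PID gives exactly one $\mathbb{Q}[H][0]$ summand) is complete, and that is already the ring relevant to the Rasmussen application in the next proposition. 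The Karoubi-envelope argument of \cite{ba6} likewise requires $2$ invertible, so neither of your proposed fixes closes the integral gap --- but the paper's own proof leaves that same gap open, so in rigor you are ahead of the text rather than behind it.
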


\begin{proof}
This theorem follows from proposition \ref{prop:2^n} using a technique we call ''turning on and off''. Given a knot, we know that there is a certain promotion that results in two generators at degree zero. If we turn that homology theory off (that is look at the underlying universal complex before applying the promotion), these generators could only have come from a complex component $0\rightarrow|\rightarrow0$ at homological degree zero. If there were more such components we could have turned homology theory on and get a contradiction to the above proposition (there would be more homology generators)

\qed
\end{proof}

We can use a similar ``turning on and off'' to prove the following:

\begin{theorem}\label{thm:nocoeef}
In the universal complex, among all possible irreducible sub-complexes of the form $0 \rightarrow | \xrightarrow{mH^n} | \rightarrow 0$, only $m=1$ exists.
\end{theorem}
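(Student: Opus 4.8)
The plan is to run a ``turning on and off'' argument of exactly the kind used for Theorem~\ref{ras}. Suppose, for contradiction, that the universal complex of some knot $K$ contains an irreducible direct summand $0\to\,|\;\xrightarrow{\,mH^n\,}\,|\to 0$, sitting in homological degrees $r$ and $r+1$, with integer coefficient of absolute value $|m|\ge 2$ (the sign of $m$ is a unit in $\mathbb{Z}[H]$, so we may assume $m\ge 2$; here $n\ge 0$ is arbitrary, and $|m|\ge2$ or $n\ge1$ is precisely what makes the piece a non-unit, hence genuinely irreducible). Fix a prime $p$ dividing $m$. The idea is to switch on a link homology theory over a field of characteristic $p$ in which this summand degenerates to a zero map, thereby inflating the total rank of the homology beyond what Proposition~\ref{prop:2^n} allows.

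Concretely, I would apply the TQFT $\calF_{ht}$ over the field $k=\mathbb{F}_p$ with the specialization $h=1$, $t=0$ (equivalently, the $H$-promotion of chapter~\ref{chap:TQFT} with $H$ sent to its $2\times2$ matrix on $k[X]/(X^2-X)$); by the remark quoted in chapter~\ref{chap:TQFT} this coincides with applying $\calF_{ht}$ before the complex reduction. Here $h^2+4t=1\neq 0$ is a nonzero square in $k$, so Proposition~\ref{prop:2^n} applies in every characteristic (the hypothesis $\sqrt{a}\in k$ needed when $\operatorname{char} k=2$ holds since $\sqrt{1}=1$, and there $H\mapsto\operatorname{id}$), and the homology of the knot $K$ under this theory has total dimension $2^{1}=2$ over $k$.

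On the other hand, the promotion — equivalently $\calF_{ht}$ followed by base change — is an additive functor, so it carries the summand $0\to\,|\;\xrightarrow{\,mH^n\,}\,|\to 0$ to a direct summand $0\to k^2\xrightarrow{\,m\,M^n\,}k^2\to 0$ of the chain complex of $K$, where $M$ is the image of $H$. Since $p\mid m$ we have $m=0$ in $k$, hence $m\,M^n=0$; the homology of this two-term summand is $k^2$ in degree $r$ and $k^2$ in degree $r+1$, of total dimension $4$. Because homology commutes with direct sums, $\dim_k H^{\ast}(\calF_{ht}\Kh(K))\ge 4>2$, contradicting the previous paragraph. Hence no irreducible summand of the stated form can have $|m|\ge2$, so $m=1$ in all of them.

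The one point that needs care is the input from Proposition~\ref{prop:2^n}: its conclusion of exactly $2^n$ surviving generators rests on $H$ being invertible and, in characteristic $2$, diagonal, which is why the specialization must keep $h^2+4t$ a nonzero square — the choice $h=1,\ t=0$ secures this uniformly across all primes $p$, including $p=2$. As in the proof of Theorem~\ref{ras}, one also uses that the pieces under discussion are honest direct summands of the reduced complex, so that the additive promotion functor transports them to direct summands of the target and the rank count is legitimate. I expect this to be the whole of the argument; for links the same reasoning goes through once one additionally records homological degrees and compares against the prescribed (and evenly spaced) degrees of the $2^n$ Lee-type generators supplied by Proposition~\ref{prop:2^n}.
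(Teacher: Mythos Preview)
Your argument is correct and follows exactly the paper's ``turning on and off'' strategy: pick a prime $p\mid m$, work over $\mathbb{F}_p$ so the summand's differential vanishes, then switch on an $h^2+4t\neq 0$ theory to contradict Proposition~\ref{prop:2^n}. Your explicit choice $h=1,\ t=0$ and the verification that it meets the hypotheses uniformly in characteristic are a welcome sharpening of the paper's terse ``we can always find a field'' and ``turning on a $h^2+4t=a\neq 0$ homology theory.''
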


\begin{proof}
If such an irreducible sub-complex appears, with $m\neq1$, we can always find a field such that this sub-complex reduces to $0 \rightarrow | \xrightarrow{0} | \rightarrow 0$. Turning on a $h^2+4t=a\neq0$ homology theory will contradict proposition \ref{prop:2^n}.

\qed
\end{proof}

\vspace{3mm}
\begin{remark}
Lee's theory is a good example of the above, where $h=0$ and $t=1$ (thus $a^2=4$). Note that the only obstruction to $H$ diagonalization is division by 2, thus it is not a surprising consequence that Lee's theory over $\mathbb{Z}$ has only powers of $2$ torsion groups, on top of the 2 generators in homological degree zero (for knots). For the theory with $h=1$ and $t=0$ there is no obstruction to diagonalization over $\mathbb{Z}$ and thus no torsion exists (see also \cite{mac}).
\end{remark}

\subsection{The Rasmussen invariant and spectral sequences}\label{section:rasinv}

Let us now continue and see how the universal theory embodies further tools and supply a unified simple way of looking at the various link homology theories (this section is for the expert reader). Lee's theory (over $\mathbb{Q}$), as described in \cite{lee1}, is a part of a spectral sequence that starts with the ``standard'' Khovanov homology complex and converges to Lee's homology theory (I will not define here what a spectral sequence is, for a very brief introduction the reader can refer to \cite{cho}). Based on that theory, Rasmussen came up with a knot invariant which gives a lower bound for the slice genus of a knot and a purely combinatorial proof of the Milnor conjecture \cite{ras1}. I would like now to relate the universal theory to the Rasmussen invariant and the spectral sequence.
\\

Although the Rasmussen invariant construction involves Lee's filtered theory, and in some way the spectral sequence, the invariant itself is relatively simple. It was observed already in \cite{kho1} that the invariant can be extracted by looking at the structure of Lee's theory at homological degree zero. Given our theorem \ref{ras} we have:

\begin{proposition}
The Rasmussen invariant $s$ is equal to the internal (vertical) degree of the special line in the unique sub-complex component $0\rightarrow|\rightarrow0$ of the universal complex.
\end{proposition}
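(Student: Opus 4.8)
The plan is to connect the universal complex's degree-zero piece to Rasmussen's original definition of $s$ via Lee's theory, using the ``turning on'' technique from the previous theorems. First I would specialize the universal complex by the Lee promotion $H \mapsto \left(\begin{smallmatrix} 0&2\\2&0\end{smallmatrix}\right)$ over $\mathbb{Q}$ (i.e.\ $h=0$, $t=1$, so $h^2+4t=4\neq 0$); by Proposition~\ref{prop:2^n} and Theorem~\ref{ras}, for a knot the resulting complex is homotopy equivalent to one with a single generator in homological degree $0$ and vanishing differentials, and that generator comes precisely from the unique sub-complex component $0\to|\to 0$ of the universal complex guaranteed by Theorem~\ref{ras}. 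The point is that applying the promotion to this $0\to|\to0$ piece replaces $|$ by $\mathbb{Q}\oplus\mathbb{Q}$ (two copies of $\mathbb{Q}$, in internal degrees $j-1$ and $j+1$ if $j$ is the internal degree of the special line), contributing exactly the two Lee generators in homological degree $0$.

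Next I would recall how $s$ is read off from Lee's theory. Rasmussen's invariant is defined from the $q$-filtration (the internal grading) on Lee homology: for a knot, Lee homology in homological degree $0$ is two-dimensional, spanned by classes $\mathfrak{s}_o$ and $\mathfrak{s}_{\bar o}$ coming from the two orientations, and their filtration degrees are $s-1$ and $s+1$; equivalently $s$ is the average (or $s_{\min}+1 = s_{\max}-1$) of these two filtration levels. As noted in~\cite{kho1}, $s$ is entirely determined by the structure of Lee's theory concentrated in homological degree zero. Combining this with the previous paragraph: the two Lee generators in degree $0$ sit in internal degrees $j-1$ and $j+1$ where $j$ is the internal degree of the special line in the $0\to|\to0$ component, so their average is $j$, whence $s = j$. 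I would also note that the internal grading conventions in this thesis (the $\{r+n_+-n_-\}$ shift defining $\Kh(K)$) match Khovanov's, so that the filtration on the promoted complex agrees with the one used to define $s$; this bookkeeping is routine but must be checked.

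The main obstacle is the filtration/grading comparison: the reduction theorem and the delooping isomorphisms are constructed in the homotopy category, and while they respect the internal degree on the nose (all the delooping maps are homogeneous, as emphasized after Definition~\ref{def:delooping}), one must verify that the homotopy equivalence collapsing the promoted Lee complex to its two degree-zero generators is \emph{filtered} in the sense needed so that the surviving filtration degrees are genuinely those of the special line, not merely congruent to them modulo the filtration jumps that Lee's differential can induce. The cleanest route is to observe that in the universal complex \emph{all} differentials are monomials $mH^n$ (Theorem~\ref{thm:nocoeef} gives $m=1$), so after the Lee promotion each surviving differential either is an isomorphism of filtered pieces or vanishes; hence the Gaussian-elimination reductions of Lemma~\ref{lemma}-style (the reduction lemma from~\cite{ba3}) are strictly filtration-preserving, and no spectral-sequence ``page jump'' in the filtration occurs beyond what is already visible in the universal complex. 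Once this is in place the identification $s = j$ is immediate, and the reference to~\cite{lee1,ras1,kho1} supplies the remaining standard facts about the Lee generators.
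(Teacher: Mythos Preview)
The paper gives essentially no proof of this proposition: it simply cites the observation from~\cite{kho1} that $s$ can be read off from Lee's theory in homological degree zero, invokes Theorem~\ref{ras}, states the proposition, and then offers it as a \emph{definition} for readers unfamiliar with Rasmussen's construction. So your proposal is considerably more detailed than anything the paper does, and your overall strategy---promote to Lee, identify the two degree-zero generators with the $0\to|\to0$ piece, and match their filtration levels $j\pm1$ to Rasmussen's $s\pm1$---is the right one.

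There is, however, a genuine error in your final paragraph. You write that ``in the universal complex all differentials are monomials $mH^n$ (Theorem~\ref{thm:nocoeef} gives $m=1$)''. Theorem~\ref{thm:nocoeef} says no such thing: it concerns only \emph{irreducible} two-term sub-complexes $0\to|\xrightarrow{mH^n}|\to0$, not arbitrary matrix entries of the differential. The diamond examples in Section~\ref{section:pheno} (e.g.\ for the $(6,5)$ torus knot) display entries such as $3H^2$ and $-3$, so coefficients other than $\pm1$ certainly appear. Your proposed mechanism for controlling the filtration therefore fails as stated.

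The correct resolution of the filtration issue is simpler and does not need that theorem. All of the delooping isomorphisms (Definition~\ref{def:delooping}) and all of the Gaussian eliminations in the algorithm of Section~\ref{section:algorithm} are degree-$0$ operations: the reduction lemma is only applied when $\phi$ is a genus-$0$ curtain, hence a degree-$0$ isomorphism, and the resulting maps $\epsilon-\gamma\phi^{-1}\delta$ remain degree $0$. Thus the universal complex is \emph{graded}-homotopy-equivalent to the original geometric complex. Applying the Lee promotion to a degree-$0$ chain homotopy equivalence yields a \emph{filtered} chain homotopy equivalence (each map is non-increasing on filtration), so the filtration degrees of the two surviving Lee classes are exactly $j-1$ and $j+1$, and $s=j$ follows. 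This is the bookkeeping you should supply in place of the appeal to Theorem~\ref{thm:nocoeef}.
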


If the reader is not familiar with the construction of the Rasmussen invariant, please take the above proposition as the \emph{definition} of the Rasmussen invariant.
\\

With this proposition at hand, computations of the Rasmussen invariant become simple. One computes the universal complex in full, then searches for the appropriate component at homological degree zero and extracts the invariant. For example, the Rasmussen invariant of $16n864894$ (the non-alternating knot number $864894$ with 16 crossings) is $0$.
\\

The invariant for this knot was unknown as reported in \cite{shuma2}. There, the \emph{width} of the ``standard'' homology theory (the number of diagonals that the homology groups lie on -- discussed later in this thesis) is used to determine the possible differentials in Lee's spectral sequence, for the above knot this method was indecisive. Later, using some ideas from \cite{kho3} \cite{kho1} the author of \cite{shuma2} was able to \emph{deduce} that the invariant is indeed $0$ \cite{shuma1}. The universal theory allows for a \emph{computation} of the Rasmussen invariant without the need of heuristics and deduction.
\\

Here is the relevant piece of the universal complex computed for the knot $16n864894$ (only the degree $0$ sub-complex is given). One can easily extract the $0\rightarrow|\rightarrow0$ sub-complex at homological degree $0$ and internal degree $0$ to calculate the Rasmussen invariant:

\[
  \xymatrix@C=5.5cm{
    {\begin{bmatrix} -4\\-4\\-6  \end{bmatrix}}_{-1}
    \ar[r]^{\begin{pmatrix}0&0&0\\0&0&0\\0&0&0\\H&0&0\\0&H&0\\0&0&0\\0&0&H\end{pmatrix}}
    &
    {\begin{bmatrix}     0\\-2\\-2\\-2\\-2\\-4\\-4    \end{bmatrix}}_{0}
    \ar[r]^{\begin{pmatrix} 0&H&0&0&0&0&0\\0&0&H&0&0&0&0\\0&0&0&0&0&H&0\end{pmatrix}}
    &
    {\begin{bmatrix}    0\\0\\-2    \end{bmatrix}}_{1} }\]

\vspace{3mm}
Spectral sequences made their first appearance in link homology theory via the work of Lee \cite{lee1}. It can be shown that her homology theory is the final page of a spectral sequence defined using her differential on the filtered Khovanov complex. The second page is Khovanov's ``standard'' link homology theory. The idea of creating a spectral sequence that converges to certain link homology theories was later used intensively by Turner \cite{turner2} \cite{turner3} \cite{turner4} for the theory over $\mathbb{Z}_2$ and for the ``standard'' Khovanov link homology theory. These papers present some computations of these sequences which allow for certain computations of the link homology theories (along with various phenomenological observations).
\\

One of the advantages of the universal theory is that we can compute Lee's theory in full. By putting $h=0$ and $t=1$ in the ``big'' TQFT promotion one gets Lee's theory, and loses the grading ($t$ had degree $-4$). By keeping $t$ as a variable, we can compute the generalized Lee theory while keeping the grading of the theory. Thus by keeping track of the 1-handle operator $H$ in the universal theory (and thus keeping track of degrees of $t$ in the generalized Lee theory) we can keep track of the differentials in the spectral sequence (coming from powers of $t$ in the generalized Lee theory) and compute the different pages of the spectral sequence. The universal theory supplies a way of keeping track of the lost grading even when we work over the filtered theory (keeping track of degrees of $t$ \emph{before} setting it to $1$). The spectral sequence thus can be computed in full from the universal complex.
\\

One of the issues that this type of computations addresses is the \emph{speed of convergence} of the spectral sequence. To be concrete, let us work with Lee's theory (using the generalized Lee promotion) taken over various ground fields ($\mathbb{Q}$ or $\mathbb{Z}_p$) or $\mathbb{Z}$. We compute the universal theory, adjust the ground ring and use the ``standard'' promotion to calculate the first page (this would be just the ``standard'' Khovanov homology we get when we set $t=0$). Each page consists of the homology of the previous page where the $n$-th differential consists of only the appearances of the $n$-th power of $t$ in the universal complex (which is then assigned $t=1$). Thus the first page is the ``standard'' Khovanov homology which comes about if we ignore all powers of $t$ (equivalent to setting $t=0$ in this promotion). The next page uses differentials coming from first power of $t$ (these appear in $H$ and $H^2$ in the universal complex). The page after that will use the differential of degree 8 originating from $t^2$ (appearing first in $H^3$ in the universal complex). And so on. This spectral sequence will converge eventually to Lee's theory (which except possible torsion groups, is degenerated to $2^n$ generators, as mentioned earlier). It might take more than 2 pages for this convergence to happen. The number of pages it takes is called the \emph{speed of convergence} of Lee's spectral sequence.
\\

We say that the sequence is \emph{converging fast} if Lee's spectral sequence is converging on the second page (right after the ``standard'' Khovanov homology page, note there are various page counting conventions). Otherwise we say it \emph{converges slow}. The literature on link homology spectral sequences is of small size \cite{turner1} \cite{turner2} \cite{turner3} \cite{turner4} \cite{shuma1} \cite{shuma2} and barely mention the issue of speed of convergence (see also \cite{ba3}). There has not been a systematic way to determine the speed of convergence, and only certain heuristics has been used in the literature. These are:

\begin{proposition}\label{speed}\cite{shuma1} \cite{shuma2} \cite{kho1}\\
1. If the width (the number of lines the non zero homology groups occupy in $(i,j)$ space) of the ``standard'' Khovanov homology theory for a knot is at most 3 then Lee's theory converges fast.
\\
2. If the dimension of the ``standard'' Khovanov homology groups differ from the ones of the \emph{reduced} ``standard'' Khovanov homology groups (see \cite{kho3}) by 1, Lee's theory converges fast.
\\
3. If convergence is fast over $\mathbb{Z}_p$ then the second page over $\mathbb{Z}$ will have no $p$-torsion. If this happens for all $p$ then convergence over $\mathbb{Z}$ is fast as well. Rephrasing, if convergence over $\mathbb{Z}$ is slow due to $p$-torsion then convergence is slow over $\mathbb{Z}_p$
\end{proposition}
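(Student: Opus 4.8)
\emph{Proof proposal.} The plan is to reduce all three assertions to a blockwise analysis of the universal complex. By Theorems~\ref{ras} and~\ref{thm:nocoeef}, over $\mathbb{Z}$ (or over any field) the universal complex of a knot $K$ is, up to homotopy, a direct sum of one isolated special line $|$ in homological degree $0$ together with finitely many ``$H$-blocks'' $0\to|\xrightarrow{H^{n}}|\to0$ with $n\ge1$; write $a_n=a_n(K)\ge0$ for the number of blocks with a given exponent $n$. Every object appearing in the proposition --- standard and reduced Khovanov homology, Lee's homology, and each page of Lee's spectral sequence --- is built from this complex by applying one of the promotions of Chapter~\ref{chap:TQFT}, and each such construction is additive over the blocks; hence it is the direct sum of the corresponding gadget computed on the isolated generator and on each $H$-block separately. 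So it suffices to understand one block $0\to|\xrightarrow{H^{n}}|\to0$ under the three relevant promotions: the standard one $H\mapsto\left(\begin{smallmatrix}0&0\\2&0\end{smallmatrix}\right)$ (for which the promoted $H^{n}$ vanishes once $n\ge2$ and has rank $1$ when $n=1$), the reduced one $H\mapsto 0$, and the generalized Lee one $H\mapsto\left(\begin{smallmatrix}0&2t\\2&0\end{smallmatrix}\right)$, whose $n$-th power is $\pm2^{n}$ times a monomial in $t$ and whose $t$-filtration drives the spectral-sequence differentials.

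For part~1, I would read off the $(i,j)$-support of an $H$-block in the standard Khovanov homology over $\mathbb{Q}$ straight from the delooping degree shifts of Definition~\ref{def:delooping}: an $n=1$ block occupies two adjacent $\delta$-diagonals, an $n=2$ block occupies three, and an $n\ge3$ block has its homology spread over a $\delta$-interval of width $n+1\ge4$; the isolated generator occupies two adjacent diagonals. Hence $\mathrm{width}(K)\le3$ forces $a_n=0$ for every $n\ge3$. It then remains to observe that $n\le2$ blocks collapse on the page right after the Khovanov page: for $n=1$ the first secondary differential is the $t^{1}$-part of the promoted $H$, an isomorphism over $\mathbb{Q}$; for $n=2$ it is multiplication by the scalar $4$ coming from the promoted $H^{2}=4t\cdot\mathrm{Id}$, again invertible over $\mathbb{Q}$. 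So every block dies on that page and convergence is fast. For part~2, I would record the dimension counts of the blocks over $\mathbb{Q}$: the generator contributes $(\dim\Kh,\dim\widetilde{\Kh})=(2,1)$, an $n=1$ block contributes $(2,2)$, and an $n\ge2$ block contributes $(4,2)$ (the reduced count is $2$ for every block because the reduced promotion kills $H$). Summing over blocks yields $\dim\Kh(K)-\dim\widetilde{\Kh}(K)=1+2\sum_{n\ge2}a_n$, so the hypothesis $\dim\Kh-\dim\widetilde{\Kh}=1$ is equivalent to $a_n=0$ for all $n\ge2$ --- even stronger than what part~1 requires --- and the same collapse argument gives fast convergence.

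For part~3 I would exploit that the block decomposition is a decomposition by \emph{free} $\mathbb{Z}[H]$-modules, hence survives $-\otimes_{\mathbb{Z}}\mathbb{Z}_p$: the Lee spectral sequence over $\mathbb{Z}_p$ is the direct sum of the mod-$p$ reductions of the block spectral sequences over $\mathbb{Z}$, and the page ``immediately after Khovanov'' over $\mathbb{Z}$ and over $\mathbb{Z}_p$ are related block by block through the universal coefficient exact sequence for the (bounded) filtered complex. The local input is a valuation count: on an $H$-block the secondary differentials are $\pm2^{n}$ times elementary matrices, so over $\mathbb{Z}$ their cokernels carry only $2$-power torsion and the block fails to be used up on the post-Khovanov page exactly when such torsion is present, whereas over $\mathbb{Z}_p$ with $p$ odd these coefficients are units and the block behaves as over $\mathbb{Q}$. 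Chasing this block by block gives the three implications: fast convergence over $\mathbb{Z}_p$ forces every block to have collapsed mod $p$, which by universal coefficients forces the absence of $p$-torsion on the post-Khovanov page over $\mathbb{Z}$; if this holds for all $p$ the page is free, hence already equal to Lee homology, so convergence over $\mathbb{Z}$ is fast; and the final sentence is the contrapositive, the surviving $p$-torsion over $\mathbb{Z}$ being precisely the obstruction to mod-$p$ collapse.

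I expect part~3 to be the main obstacle. Parts~1 and~2 are linear algebra on two-term blocks once the decomposition is in hand. Part~3, by contrast, requires controlling the interplay of the $q$-filtration with integral torsion and the universal coefficient theorem uniformly in $p$, together with the page-indexing conventions and the degeneracy of Lee's theory over $\mathbb{F}_2$ (where one must replace the generalized Lee promotion by the $h$-deformation, whose secondary differentials carry \emph{unit} $h$-coefficients, so that a power of $h$ plays the role that the integer $2^{n}$ plays integrally). Making this bookkeeping precise and uniform in $p$ is exactly the point at which the literature has argued only heuristically, and is where the real work of the proof lies.
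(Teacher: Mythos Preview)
The paper does not prove this proposition at all: it is quoted from \cite{shuma1}, \cite{shuma2}, \cite{kho1} as a list of known heuristics, and the surrounding text explicitly contrasts these heuristics with the universal theory's ability to \emph{compute} convergence speed directly. So there is no ``paper's own proof'' to compare against.

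That said, your proposal contains a genuine gap. Your entire argument rests on the claim that, over $\mathbb{Z}$, the universal complex of a knot decomposes as one isolated line plus finitely many two-term $H$-blocks $0\to|\xrightarrow{H^n}|\to 0$. You attribute this to Theorems~\ref{ras} and~\ref{thm:nocoeef}, but neither theorem says that: Theorem~\ref{ras} gives only the existence and uniqueness of the isolated line, and Theorem~\ref{thm:nocoeef} constrains the coefficient in \emph{those particular} two-term sub-complexes that happen to occur --- it does not assert that every irreducible summand is of that form. In fact the paper states the opposite: the decomposition into lines holds over a field (citing \cite{kho1}, \cite{kho5}), while ``over the integers, the case is much more complicated and there are no general decomposition results,'' and Section~\ref{section:pheno} exhibits explicit \emph{diamonds} (Definition~\ref{def:diamonds}) --- irreducible three-term sub-complexes with integer coefficients --- in TorusKnot$[6,5]$, TorusKnot$[4,5]$, and $13n3663$. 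Your part~3 argument, which requires the integral block decomposition to tensor down to $\mathbb{Z}_p$, therefore does not go through; diamonds are precisely the structures that produce the odd torsion and slow-convergence phenomena you are trying to control.

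For parts~1 and~2 your block analysis over $\mathbb{Q}$ is reasonable once you invoke the correct source for the line decomposition (Khovanov's structure theorem over a field, not Theorems~\ref{ras}/\ref{thm:nocoeef}), and your width and dimension counts are essentially right. But you should be aware that this line of argument is already what \cite{kho1} and \cite{shuma2} do, so you are reconstructing the cited proofs rather than giving a new one.
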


\vspace{3mm}
The above are obviously not enough in order to determine speed of convergence for every knot. ``Fat'' torus knots (TorusKnot$[m,n]$ with $m,n\geq5$, say) will be thick enough for possible slow convergence over various fields or rings. The advantage of the universal theory is that it finally gives a way of \emph{calculating} directly the speed of convergence thus there is no need of non-organized heuristics. Let us finish this discussion with an interesting example, the torus knot TorusKnot$[6,5]$. When one computes the universal theory of TorusKnot$[6,5]$ the following irreducible sub-complex appears (it is called a \emph{diamond} and will be discussed soon):

 \[
  \xymatrix@C=2cm{
    {\begin{bmatrix} 36  \end{bmatrix}}_{12}
    \ar[r]^{\begin{pmatrix}    H^3\\3H^2    \end{pmatrix}}
    &
    {\begin{bmatrix}     42\\40    \end{bmatrix}}_{13}
    \ar[r]^{\begin{pmatrix}      -3&H    \end{pmatrix}}
    &
    {\begin{bmatrix}    42    \end{bmatrix}}_{14}
    }
\]

 %\[0 ~~L(12,36) ~~H^3(36,42)+3H^2(36,40) ~~L(13,42)+L(13,40) ~~ -3(42,40)+H(40,42)~~ L(14,42) ~~0 \]

This sub-complex can be promoted to the various homology theories. For example, $H=\left(\begin{array}{cc} 0&0 \\ 2&0 \end{array} \right)$ gives us the ``standard'' Khovanov homology over the integers, and after some simplifications we get the following homology groups (rows are internal degrees and columns are homological degrees):
  \[
  \begin{array}{|c|c|c|c|}
  \hline&12&13&14\\
  \hline43&&&\mathbb{Z}_3\\
  \hline41&&\mathbb{Z}&\\
  \hline39&&\mathbb{Z}&\\
  \hline37&\mathbb{Z}&&\\
  \hline35&\mathbb{Z}&&\\
  \hline
  \end{array}\]

Now, let us proceed and make the following promotion to Lee's theory $H=\left(\begin{array}{cc} 0&2 \\ 2&0 \end{array} \right)$. After simplifications the homology theory  of the sub-complex calculates to:

  \[
  \begin{array}{|c|c|c|c|}
  \hline&12&13&14\\
  \hline43&&&\\
  \hline41&&\mathbb{Z}_4&\\
  \hline39&&\mathbb{Z}_4&\\
  \hline37&&&\\
  \hline35&&&\\
  \hline
  \end{array}\]

Speed of convergence (from the ``standard'' table to Lee's table) can be easily calculated using the universal theory, while keeping track of the degrees of the maps. The result is that the theory converge \emph{fast} over $\mathbb{Z}$. \\

The exact same calculation can be done over $\mathbb{Z}_3$. We get that Lee's theory has no homology groups coming from this sub-complex (as expected when $2$ is invertible) and the ``standard'' Khovanov theory has homology groups:

  \[
  \begin{array}{|c|c|c|c|}
  \hline&12&13&14\\
  \hline43&&\mathbb{Z}_3&\mathbb{Z}_3\\
  \hline41&&\mathbb{Z}_3&\\
  \hline39&&\mathbb{Z}_3&\\
  \hline37&\mathbb{Z}_3&&\\
  \hline35&\mathbb{Z}_3&&\\
  \hline
  \end{array}\]

This time, though, convergence is \emph{slow}. This gives us an example showing that the converse of part 3 of proposition \ref{speed} is false (slow convergence over $\mathbb{Z}_3$ but fast over $\mathbb{Z}$). The mirror image of our example, TorusKnot[6,-5], has slow convergence over $\mathbb{Z}$ \cite{shuma1}, and therefore slow convergence over $\mathbb{Z}_3$ (proposition \ref{speed}). Thus we also get an example for a knot with fast convergence over $\mathbb{Z}$ who's mirror has slow convergence over $\mathbb{Z}$ (this cannot happen over $\mathbb{Z}_p$ \cite{shuma1}).
\\

\section{Phenomenology of the universal complex and link homology}\label{section:pheno}
Some time after Khovanov homology was introduced (what is called the ``standard'' Khovanov homology over $\mathbb{Q}$ in this thesis) computations of the homology groups of various knots appeared. With it came the phenomenology of Khovanov homology. Statements and conjectures regarding the shape, pattern and inner structure of the homology groups were made. Most of these statements dealt with the notion of \emph{thickness} of the homology and some (much less) dealt with the torsion of the ``standard'' theory over $\mathbb{Z}$. Most of the phenomenology surrounding link homology can be referred to Bar-Natan \cite{ba1} \cite{ba5}, Shumakovitch \cite{shuma3} and Khovanov \cite{kho3}. It is important to realize, though, that some theoretical advancement was made, proving theorems regarding the structure of the homology groups. See \cite{stosic1} \cite{stosic2} for theoretical results regarding torus knots, \cite{lee1} for results regarding alternating knots and \cite{asaeda1} \cite{shuma3} for results regarding torsion groups. This section describes how the universal theory is used in order to explain and unify most of the phenomenological statements. We will see how the universal theory projects down to the various phenomena and how through the eyes of the universal complex many patterns are ``natural''. We hope to convince the reader that the universal complex is the right approach for stating and proving results regarding the various homological phenomena.

\subsection{Thickness, torsion and homology patterns}
We start by defining some notions that are used widely in knot homology phenomenology. It is accustomed to present the ``standard'' Khovanov homology theory over $\mathbb{Q}$ ($\mathbb{Q}[X]/X^2$ TQFT) via the dimensions of the homology groups $\calH^{i,j}$, were $i$ represents the homological degree (columns) and $j$ is the internal degree (rows). Here is an example of the knot $4_1$ (only non-trivial groups are marked) :

 \def\merge{\multicolumn{2}{c|}{}}
  \def\putknot{
    \multicolumn{2}{c|}{\smash{\raisebox{1.5mm}{$\eps{2cm}{4_1}$}}}
  }
\[
  \begin{tabular}{|c|c|c|c|c|c|} \hline
          & $-2$& $-1$&  0  &  1  &  2  \\ \hline
      5   &     &     &     &     &1\\ \hline
      3   &     &     &     &&\\ \hline
      1   &     &     &1&1&     \\ \hline
     $-1$ &     &1&1&  \merge   \\ \cline{1-4}
     $-3$ &&&&  \merge   \\ \cline{1-4}
     $-5$ &1&     && \putknot  \\ \cline{1-4}
     $<-5$&     &     &&  \merge   \\ \hline
  \end{tabular}
\]

Another example of such a table was given in \ref{section:algorithm} for the $3_1$ knot, and partial tables were shown in the previous section. Using this table we define a \emph{diagonal} to be a line $2i-j=b$ for some $b$. Homological \emph{thickness} is the number of diagonals the homology groups occupy (sometimes also denoted as \emph{width}). A knot is called \emph{thin} if it occupies only $2$ diagonals, otherwise it is called \emph{thick}. For example, every alternating knot is thin \cite{lee1}. Such tables are also used for theories over $\mathbb{Z}_p$, or even over $\mathbb{Z}$, where one distinguishes between copies of $\mathbb{Z}$ and possible torsion groups.
\\

There are some obvious patters, when one observes such tables for many knots, and these were given names (see \cite{shuma3} for a good summary):
\\
\[
\begin{tabular}{ccccc}
  $\begin{array}{|c|c|}\hline&\mathbb{Q}\\\hline&\\\hline\mathbb{Q}&\\\hline\end{array}$&
  $\begin{array}{|c|c|}\hline&\mathbb{Z}_2\\\hline\mathbb{Z}_2&\mathbb{Z}_2\\\hline\mathbb{Z}_2&\\\hline\end{array}$&
  $\begin{array}{|c|c|}\hline&\mathbb{Q}\\\hline&\mathbb{Q}\\\hline&\\\hline\end{array}$& $\begin{array}{|c|c|}\hline&\mathbb{Z}\\\hline&\mathbb{Z}_2\\\hline\mathbb{Z}&\\\hline\end{array}$&
  $\begin{array}{|c|c|}\hline&\mathbb{Z}\\\hline&>\mathbb{Z}_2\\\hline\mathbb{Z}&\\\hline\end{array}$
  \\
  $\mathbb{Q}$ knight & $\mathbb{Z}_2$ tetris & $\mathbb{Q}$  & $\mathbb{Z}_2$ torsion & excess torsion\\
  move   & piece  & pawn & knight & knight\\
\end{tabular}
\]

Due to theorem \ref{ras}, for example, one can see that the homological thickness of any knot is at least $2$, and that there is always one pawn at degree $0$. The other homology groups (over $\mathbb{Q}$) seem to be arranged in knight moves and it was conjectured that this is always the case (over $\mathbb{Q}$). When one takes torsion into account, the notion of thickness can be extended further. A knot is called \emph{torsion thin} if it contains only $\mathbb{Z}_2$-torsion knights (except the usual pawn at degree $0$). It is called \emph{torsion rich} if it contains excess torsion knights (possibly for various torsion groups) or other extra pieces of torsion (usually torsion pawns). There are some conjectures regarding these torsion patterns. For example it is was conjectured that every thin knot is torsion thin. It was also conjectured that a knot is torsion rich iff its reduced homology has torsion. For these and other conjectures (and some theorems!) see \cite{shuma3}.

\subsection{Lines and Diamonds}
The universal theory is calculated over $\mathbb{Z}[H]$. As opposed to complexes over the integers or over the polynomial ring $\mathbb{Q}[X]$, such a complex will not have a simple decomposition (i.e the types of irreducible sub-complexes are more complicated). By looking at the examples given above for knots of up to $7$ crossings one immediately observes that a common irreducible sub-complex is $0 \rightarrow | \xrightarrow{H} | \rightarrow 0$ (on top of the usual $0\rightarrow|\rightarrow0$ at degree $0$). It can be shown that over a field (say $\mathbb{Q}$) the complex always reduces to direct sum of $0 \rightarrow | \xrightarrow{H^n} | \rightarrow 0$ for various powers of $n$, see \cite{kho1} \cite{kho5}. We will call these sub-complexes \emph{lines of order $n$}. Over the integers, the case is much more complicated and there are no general decomposition results. The next level of complication that I have encountered in my computations, are the \emph{diamonds}.
\\

\begin{definition}\label{def:diamonds}
An irreducible sub-complex of the universal knot homology theory of the form:
$\eps{25mm}{diamond}$ is called a \emph{diamond}. If the highest power of $H$ in the complex maps is $n$ we call it a diamond of order $n$.
\end{definition}

 An example for a diamond of order $3$ was given at the end of section \ref{section:rasinv} for the TorusKnot[6,5]:
 \[
  \xymatrix@C=2cm{
    {\begin{bmatrix} 36  \end{bmatrix}}_{12}
    \ar[r]^{\begin{pmatrix}    H^3\\3H^2    \end{pmatrix}}
    &
    {\begin{bmatrix}     42\\40    \end{bmatrix}}_{13}
    \ar[r]^{\begin{pmatrix}      -3&H    \end{pmatrix}}
    &
    {\begin{bmatrix}    42    \end{bmatrix}}_{14}
    }
\]

It is important to mention that so far I have not encountered in the universal complex lines of order higher than $2$ and diamonds of order higher than $3$. Diamonds appear in knots of high crossing numbers. It is also interesting to notice that once promoted, the universal theory's diamonds decompose into lines. We will call this process \emph{cutting}. So far I have not encountered powers of $H$ higher than $3$ in any homological structure, and I am not aware of any irreducible sub-complex which is more complicated than the diamonds. There is no reason to assume that these will not arise in higher crossing numbers.

\subsection{Unifying various phenomena under the universal theory}
We will now unify the various phenomena encountered so far in knot homology under the universal theory, and show how to take the higher structures of the universal theory (i.e. the diamonds) and broaden the phenomenological perspective on link homology.
\\

Let us start with a line of order $1$: $0 \rightarrow | \xrightarrow{H} | \rightarrow 0$. Promotion of this line to the standard homology over $\mathbb{Q}$ gives the knight move. On the other hand, promoting to the standard homology theory over $\mathbb{Z}$ gives us the $\mathbb{Z}_2$-torsion knight. Working over $\mathbb{Z}_2$ and promoting to the standard homology gives us the tetris piece. Thus we have:
 \\

 \noindent \textbf{Observation 1:}
 \emph{
  The knight move, the $\mathbb{Z}_2$-torsion knight move and the tetris piece are all the manifestation of a line of order 1 in the universal theory.}
\\

One homology theory that has been studied is the $h=1$ $\mathbb{Z}_2$ filtered theory. These theories are presented in filtered tables (the $j$ index is replaced by a filtered $j$ index -- $j \leq b$ for various $b$) \cite{ba1}. It has been noticed that $\mathbb{Z}_2$ pawns appear often in this theory where tetris pieces used to be in the standard $\mathbb{Z}_2$ theory, but no explanation was given (see also \cite{turner2} for spectral sequence techniques for that theory). The universal theory easily explains such a phenomena. Again, we look at a line of order 1 and promote it over $\mathbb{Z}_2$. The standard Khovanov homology theory now gives $H=0$ promotion. This results in the tetris piece as discussed earlier. The $h=1$ promotion results in $H$ equals the identity matrix. Thus the tetris piece disappears completely. If we keep track of degrees while \emph{restricting} to the filtered table we can see that the tetris piece will leave a finger print in the shape of the $\mathbb{Z}_2$ pawn.
\\

 \noindent \textbf{Observation 2:}
 \emph{
  The $\mathbb{Z}_2$ pawns of the $h=1$ $\mathbb{Z}_2$ theory are also the manifestation of a line of order 1 in the universal theory. These are actually leftovers of the tetris piece due to ignoring the degrees and using the filtered theory table.}
\\

A line of order 2 appears first in the knot $8_{19}$ (the first thick knot). The standard table over $\mathbb{Z}$ is given by:
 \def\merge{\multicolumn{2}{c|}{}}
  \def\putknot{
    \multicolumn{2}{c|}{\smash{\raisebox{1.5mm}{$\eps{2cm}{8_19}$}}}
  }
\[
  \begin{tabular}{|c|c|c|c|c|c|c|} \hline
          & 0& 1&  2  &  3  &  4&5  \\ \hline
      17   &     &     &     &     &&$\mathbb{Z}$\\ \hline
      15   &&     &     &     &&$\mathbb{Z}$\\ \hline
      13   &     &     &&$\mathbb{Z}$&$\mathbb{Z}$&     \\ \hline
     11 &     &&&$\mathbb{Z}_2$&$\mathbb{Z}$ &   \\ \hline
     9 &&&$\mathbb{Z}$&&  \merge   \\ \cline{1-5}
     7 &$\mathbb{Z}$&&&& \putknot  \\ \cline{1-5}
     5 &$\mathbb{Z}$&&&&  \merge   \\ \hline
  \end{tabular}
\]

It is easy to recognize the effect of the line of order 2: $\xymatrix@C=2cm{
    {\begin{bmatrix} 12  \end{bmatrix}}_{4}
    \ar[r]^{\begin{pmatrix}    H^2    \end{pmatrix}}
    &
    {\begin{bmatrix}     16    \end{bmatrix}}_{3}
    }
$
on the table. It looks as if there are two knight moves one on top of the other, and thus it might be surprising that these knight moves does not have $\mathbb{Z}_2$ torsion in them (which makes the knot torsion thick). But in fact these are \emph{not} 2 separate knight moves coming from two lines of order 1, but one line of order 2.
\\

 \noindent \textbf{Observation 3:}
 \emph{
             Torsion thickness arises from lines of order 2.}
\\

It seems that almost all the standard phenomenological observations done until today can be interpreted using the universal theory's lines of order 1 and 2. We wish to discuss excess torsion. Lines with coefficients does not appear on their own (theorem \ref{thm:nocoeef}) but they do appear as a part of a diamond. Various promotions cut these diamonds and produce excess torsion knights.
\\

 \noindent \textbf{Observation 4:}
 \emph{
             Excess torsion should be studied via the structure of diamonds and should be viewed as a ``wide'' phenomena (possibly occurring over 3 columns of the table).}
\\

We will demonstrate this principle by looking at two examples. First, let us look at how diamonds can be cut into excess torsion that spreads over $3$ columns. The torus knot TorusKnot$[4,5]$ has the following diamond in its universal theory:
  \[
  \xymatrix@C=2cm{
    {\begin{bmatrix} 24  \end{bmatrix}}_{8}
    \ar[r]^{\begin{pmatrix}    H^2\\2H    \end{pmatrix}}
    &
    {\begin{bmatrix}     28\\26    \end{bmatrix}}_{9}
    \ar[r]^{\begin{pmatrix}      -2&H    \end{pmatrix}}
    &
    {\begin{bmatrix}    28    \end{bmatrix}}_{10}
    }
\]

After promoting this diamond to the ``standard'' homology over $\mathbb{Z}$ the diamond is cut into two lines:

 $ \xymatrix@C=2cm{
    {\begin{bmatrix} 24  \end{bmatrix}}_{8}
    \ar[r]^{\begin{pmatrix}    0&0\\4&0    \end{pmatrix}}
    &
    {\begin{bmatrix}     26    \end{bmatrix}}_{9}
    }$ and
    $\xymatrix@C=2cm{
    {\begin{bmatrix} 28  \end{bmatrix}}_{9}
    \ar[r]^{\begin{pmatrix}    2&0\\0&2    \end{pmatrix}}
    &
    {\begin{bmatrix}     28    \end{bmatrix}}_{10}
    }$
These two pieces give rise to an excess torsion knight and a torsion pawn respectively. Put together the diamond is reflected through the following piece of standard table:
  \[
  \begin{array}{|c|c|c|c|}
  \hline&8&9&10\\
  \hline29&&&\mathbb{Z}_2\\
  \hline27&&\mathbb{Z}&\mathbb{Z}_2\\
  \hline25&&\mathbb{Z}_4&\\
  \hline23&\mathbb{Z}&&\\
  \hline
  \end{array}\]

As a second example we will see how diamonds might be cut down to torsion tetris pieces. Let us look at the knot $13n3663$ (the first torsion rich knot according to \cite{shuma3}). It has the following piece of table (where we dropped an irrelevant piece at $(-2,-5)$):
  \[
  \begin{array}{|c|c|c|c|}
  \hline&-4&-3&-2\\
  \hline-1&&&\mathbb{Z}\\
  \hline-3&&&\mathbb{Z}_2+\mathbb{Z}_2\\
  \hline-5&&\mathbb{Z}+\mathbb{Z}_2&\mathbb{Z}_2\\
  \hline-7&&\mathbb{Z}_2&\\
  \hline
  \end{array}\]

\vspace{3mm}
The universal complex has the following diamond in it:
  \[
  \xymatrix@C=2cm{
    {\begin{bmatrix} -6  \end{bmatrix}}_{-4}
    \ar[r]^{\begin{pmatrix}    -H\\-2    \end{pmatrix}}
    &
    {\begin{bmatrix}     -4\\-6    \end{bmatrix}}_{-3}
    \ar[r]^{\begin{pmatrix}      -2&H    \end{pmatrix}}
    &
    {\begin{bmatrix}    -4    \end{bmatrix}}_{-2}
    }
\]
together with an extra line  $\xymatrix@C=2cm{
    {\begin{bmatrix} -4  \end{bmatrix}}_{-3}
    \ar[r]^{\begin{pmatrix}    H    \end{pmatrix}}
    &
    {\begin{bmatrix}     -2    \end{bmatrix}}_{-2}
    }$. After promoting, the line of order one is responsible for a $\mathbb{Z}_2$-torsion knight, while the diamond is nicely cut into a $\mathbb{Z}_2$-torsion tetris piece. These two complexes overlap to create the above piece of table.
\\

 \noindent \textbf{Observation 5:}
 \emph{
                              Diamonds are cut in many ways. Some produce torsion tetris pieces and some produce a ``wide'' phenomena which is composed of torsion knights and torsion pawns combined.
                              }
\\

We have seen how the phenomenological observations done so far in the literature fit well into the universal theory, and how the universal theory unifies and explains the various phenomena which seemed to be non related. The universal theory can lead to proofs of some of the conjectures stated regarding the patterns of various link homology theories, and definitely improve the language in which link homology phenomenology is phrased and understood. higher objects (like diamonds) coming from the universal theory project down to various homology tables, creating rich phenomena.

\subsection{Phenomenology summary table}
Here is a quick summary table for the various phenomena discussed in section \ref{section:pheno}. On the left column are the simplest irreducible sub-complexes of the universal complex. The various rows give the projection of these sub-complexes on the homology table for the different link homology theories. Detailed description of all these phenomena can be found in the previous sections.

\[
  \begin{tabular}{|c|c|c|c|c|c|c|} \hline
  &standard  & standard & Lee  & standard & $\mathbb{Z}_2~h=1$& Lee  \\
  &$\mathbb{Q}$ &$\mathbb{Z}$&$\mathbb{Z}$&$\mathbb{Z}_2$&\emph{filtered} table& $\mathbb{Q}$\\ \hline
      $0 \rightarrow | \xrightarrow{H} | \rightarrow 0$
      &$\begin{array}{|c|c|}\hline&\mathbb{Q}\\\hline&\\\hline\mathbb{Q}&\\\hline\end{array}$
      &$\begin{array}{|c|c|}\hline&\mathbb{Z}\\\hline&\mathbb{Z}_2\\\hline\mathbb{Z}&\\\hline\end{array}$
      &$\begin{array}{|c|c|}\hline&\mathbb{Z}_2\\\hline&\mathbb{Z}_2\\\hline&\\\hline\end{array}$ &$\begin{array}{|c|c|}\hline&\mathbb{Z}_2\\\hline\mathbb{Z}_2&\mathbb{Z}_2\\\hline\mathbb{Z}_2&\\\hline\end{array}$
      &$\begin{array}{|c|c|}\hline&\mathbb{Z}_2\\\hline&\mathbb{Z}_2\\\hline&\\\hline\end{array}$
      &0
      \\ \hline
      $0 \rightarrow | \xrightarrow{H^2} | \rightarrow 0$
      &$\begin{array}{|c|c|}\hline&\mathbb{Q}\\\hline&\mathbb{Q}\\\hline\mathbb{Q}&\\\hline\mathbb{Q}&\\\hline\end{array}$
      &$\begin{array}{|c|c|}\hline&\mathbb{Z}\\\hline&\mathbb{Z}\\\hline\mathbb{Z}&\\\hline\mathbb{Z}&\\\hline\end{array}$
      &$\begin{array}{|c|c|}\hline&\mathbb{Z}_4\\\hline&\mathbb{Z}_4\\\hline&\\\hline&\\\hline\end{array}$
      &$\begin{array}{|c|c|}\hline&\mathbb{Z}_2\\\hline&\mathbb{Z}_2\\\hline\mathbb{Z}_2&\\\hline\mathbb{Z}_2&\\\hline\end{array}$
      &$\begin{array}{|c|c|}\hline&\mathbb{Z}_2\\\hline&\mathbb{Z}_2+\mathbb{Z}_2\\\hline&\mathbb{Z}_2\\\hline&\\\hline\end{array}$
      &0\\ \hline
      $0 \rightarrow | \rightarrow 0$
      &\multicolumn{4}{c|}{degree $0$ pawn}
      &$\begin{array}{|c|c|}\hline&\mathbb{Z}_2\\\hline&\mathbb{Z}_2+\mathbb{Z}_2\\\hline&\mathbb{Z}_2+\mathbb{Z}_2\\\hline&\vdots\\\hline\end{array}$
      &pawn\\ \hline
      $\eps{25mm}{diamond}$
      &$\begin{array}{c}standard\\lines\end{array}$
      &$\begin{array}{c}torsion:\\``wide''\\excess\\knights\\pawns\end{array}$
      &$\begin{array}{c}2^n\\torsion\end{array}$
      &\multicolumn{2}{c|}{$\begin{array}{c}standard\\lines\end{array}$}
      &0\\ \hline

  \end{tabular}
\]

\vspace{5mm}

\section{Extensions of the universal theory}
This section discusses \emph{shortly} how the techniques and methods I described in this thesis can be used in various extensions of the ``standard'' Khovanov homology theory.
\\

$\bullet$ \textbf{$sl_3$ link homology} --
As soon as the ``standard'' ($sl_2$) link homology theory was published \cite{kho2} it was clear that the theory should be extended to other knot polynomials coming from other quantum groups. The first extension was published by Khovanov in \cite{kho4}, and extended the theory to $sl_3$. It took a while, but eventually this extension was also cast into a certain geometrical formalism by Mackaay and Vaz \cite{mac}. Their construction, although relays on the geometrical complex, does not use the full strength and universality of the complex as demonstrated in this thesis. More recently, Morrison and Nieh \cite{sco1} produced a similar cobordism theory for the $sl_3$ link homology theory which try to use the full strength of the geometric formalism. For that purpose they applied the surface reduction technique and the delooping process described in this thesis (which was published in\cite{naot1}). They did not work over $\mathbb{Z}$ thus restricting some of the generality, and they did not query regarding universal TQFTs and other possible homology theories that can be put on the geometric complex, thus leaving room for further applications of this thesis to $sl_3$ link homology.
\\

$\bullet$ \textbf{Unoriented TQFTs} --
It is clear that the geometric formalism can be applied directly to knots on (thick) surfaces. The construction remains the same, the proofs do not need adjustments, the relations needed are the same and the only difference is that the basic objects are now circles on surfaces. This difference forces us to consider unoriented surfaces in our theory, thus extending the category. In \cite{tur1} Turaev and Turner extend the geometric formalism to links and knots on a surface (see \cite{man1} for similar extension to virtual knots). They construct an appropriate geometric category and apply various TQFTs (unoriented versions) to the geometric complex in order to get homology theory. The techniques and methods in this thesis can be applied to answer universality questions.
\\

$\bullet$ \textbf{Open-Closed TQFTs} --
Another extension of the geometric formalism that may benefit from the methods described in this thesis is the open-closed TQFT extension for tangles. In \cite{lau1} it is shown how to use open-closed TQFTs (knowledgable Frobenius algebras) in order to go from the geometric complex associated to a tangle into an algebraic one (to compute homology) while still preserving the composition properties of the geometric complex (coming from tangle composition). This setting can benefit much from the methods of this thesis when trying to understand the relations between the algebraic part of the open-closed TQFTs and the topological description. Again, questions of universality and information extraction should be addressed in this case as well.

\chapter{Further comments}\label{chap:comments}

\section{Homotopy classes vs. Homology groups}
The geometric complex associated to a link is an invariant up to homotopy of complexes. Thus its fullest strength lies in the homotopy class of the complex itself, and have the potential of being a stronger invariant than any homology theory applied to it. The reduction theorem given in this thesis might be the beginning of an approach to the following question: \emph{classify all complexes associated to links up to homotopy}. The category $\Komh\Mat(\Cobl)$ seemed at first too big and complicated for an answer, but complexes built on modules over polynomial rings look more hopeful. The complex is built from a category equivalent to a category of $\mathbb{Z}[H]$-modules.
Thus, it has a faithful algebraic representation, i.e. we have found a homology theory that represents faithfully the complexes in $\Komh\Mat(\Cobl)$. The geometric complex holds the same amount of information as the chain complex of the universal TQFT. Thus the question is: Classify all homotopy types of chain complexes with $\mathbb{Z}[H]$-modules as chain groups and $\mathbb{Z}[H]$ matrices as maps (monomial entries w.l.o.g.) arising from links. Since homology groups are relatively easy to calculate we are also interested to know whether the possible homology groups of the complexes associated to links determine the homotopy class of the complexes. The answers will determine the strength of the complex invariant (relative to its homology groups). It seems that the answers to these questions are not as simple as in the case of working over a field \cite{kho5}. One evidence for the complexity of these questions are the diamonds discussed in chapter \ref{chap:CompExt}.
\\

\section{Comments on marking the link, 1-1 tangles and functoriality} \label{section:commentmarkingpoint}

A small technical issue that should be addressed is the issue of marking the link at one point to get the 1-1 tangle presentation, as mentioned in the beginning of chapter \ref{chap:ComplexReductionQ}. When dealing with knots this choice of a point to mark and cut the knot open does not affect the result -- ``long knot'' theory and knot theory are the same theories. When we deal with links, one might choose different components of the link to place the mark but these choices still give isomorphic complexes (they
might be presented differently though). This is true because of the fact that the complex reduction is local, and thus in every appearance of an object of $\Cobl$ in the complex one can choose the special circle \emph{independently} and apply the complex
isomorphisms. Different choices are linked through a series of complex isomorphisms.
So all together we get that the choices made when marking the link by a point (in order to cut it open to a 1-1 tangle) do not influence the final result, and the complexes we get for various choices are isomorphic. With this said, one has to be a little careful when discussing the functoriality of link homology theory with respect to link cobordisms (much like the remark \ref{remark:Hlinear}). It seems that if one wishes to maintain this functoriality property (one of the flag properties of link homology) one needs to restrict discussion to 1-1 tangles only and 1-1 tangles cobordism , i.e cobordisms which take the special line to the special line in a connected way, thus action of $H$ is well behaved.

\section{Comments on embedded vs. abstract surfaces} As
noticed in section 11 of ~\cite{ba1}, when looking at surfaces in
$\Cobl$ over a ground ring with the number 2 invertible, there is no
difference between embedded surfaces (inside a cylinder
say) and abstract surfaces. This is true due to the fact that
any knotting of the surface can be undone by cutting necks and
pulling tubes to unknot the surface. In other words, by cutting and
gluing back, using the NC relation (divided by 2) both ways, one can
go from any knotted surface to the unknotted version of it embedded
in 3 dimensional space. Our claim is that the same is true even when
2 is not invertible. The proof is a similar argument applied to any
knotted surface using the $3S1$ relation:
\\

\begin{center}
$\begin{array}{ccccccc} \eps{20mm}{emb1} & = & \eps{20mm}{emb2} & +
& \eps{20mm}{emb3} & - & \eps{20mm}{emb4} \\
 & & \updownarrow & & \updownarrow & & \updownarrow\\
 \eps{20mm}{emb1a} & = & \eps{20mm}{emb2a} & + & \eps{20mm}{emb3a} &
 - & \eps{20mm}{emb4a}
 \end{array}$
\end{center}

The above picture shows that every crossing (a part of a knotted
surface embedded in 3 dimensions) can be flipped using the $3S1$
relation twice. Apply the $3S1$ relation once on the dashed sites
(going from top left), then smoothly change the surface (going down
the arrows) and finally use the dashed sites for another application
of the $3S1$ relation (reaching the bottom left). Every embedded
surface can be unknotted this way.

\section{FAQ: ``I want to compute the universal complex too! How do I reproduce your computations?''}\label{section:software}
The universal complex is free, easy and fun to compute! All you need is a Java compiler (to run Green's program \cite{green}), Mathematica program (if you want to use the convenient KnotTheory' software package) and temporary access to the internet. Start by going to The Knot Atlas (www.katlas.math.toronto.edu). This is the knot atlas where anyone can edit! Click on the KnotThoery' manual and read the instructions. You will be able to download the entire package (which operates on Mathematica). The package includes Green's JavaKh java software (which can be run outside of Mathematica). Browsing the KnotTheory' manual to the Khovanov homology section you will be able to learn how to use it, via some examples, in order to calculate the standard Khovanov homology over various rings and fields (note that a common reason for failing at this stage is improper path definition).
\\

In order to compute the universal theory, presented in this thesis, you need to take an extra step. The program JavaKh computes the universal complex but the output is not convenient for direct reading, thus the package KnotTheory' does not handle this computation yet. In order to translate the output into something more comprehensive you will need a certain Mathematica code that was written by Bar-Natan for this purpose. It can be easily downloaded from \cite{ba2} (you need to copy the code to your Mathematica notebook).
\\

Finally, your Mathematica notebook should look like this:
\\
First, set your path and load the KnotTheory' package using the following commands:
\begin{verbatim}
AppendTo[$Path, "c:\path"];
AppendTo[$Path, "c:\path\KnotTheory`"];
<< KnotTheory`
\end{verbatim}

Then, execute the piece of code (function definition) that you downloaded separately (in order to read the output of JavaKh) :

\begin{verbatim}
KhN[L_] := KhN[PD[L]];
KhN[pd_PD] := Module[
    {n, dir, f, cl, out},
    n = Max @@ (Max @@@ pd);
    pd1 = pd /. {
          X[n, i_, 1, j_] :> X[n, i, n + 1, j],
          X[i_, 1, j_, n] :> X[i, n + 1, j, n],
          X[1, j_, n, i_] :> X[n + 1, j, n, i],
          X[j_, n, i_, 1] :> X[j, n, i, n + 1]
          };
    f = OpenWrite["pd", PageWidth -> Infinity];
    WriteString[f, ToString[pd1]];
    Close[f];
    cl = StringJoin[
        "!java -classpath \"", ToFileName[KnotTheoryDirectory[], "JavaKh"],
        "\" ", " JavaKh -U", " < pd"];
    f = OpenRead[cl];
    out = Read[f, Expression];
    Close[f];
    out = StringReplace[out, {"q" -> "#1", "t" -> "#2"}];
    kh = ToExpression[out <> "&"][q, t];
    minr = Exponent[kh, t, Min];
    maxr = Exponent[kh, t, Max];
    obs = Expand[kh /. h -> 0 /. M[_, n_, ___]  :> Plus @@ Array[Arc, n]];
    obs = obs /. (q^j_.)*Arc[i_] :> Arc[j, i] /. Arc[i_] :> Arc[0, i];
    mos = Expand[
        h*kh /. {M[0, _] -> 0, M[_, 0] -> 0, h -> H}
          /. M[m_, n_, cs___] :> Plus @@ Flatten[MapIndexed[
                  (#1*Curtain @@ Reverse[#2]) &,
                  Partition[{cs}, n],
                  {2}
                  ]]
        ];
    mos =
      mos /. (q^j_.)*Curtain[k_, l_] :> Curtain[j, k, l] /.
        Curtain[k_, l_] :> Curtain[0, k, l];
    mos =
      mos /. (H^g_.)*Curtain[j_, k_, l_] :>
          H^(g - 1)Curtain[j, k, j + 2(g - 1), l];
    Table[{r, Coefficient[obs, t, r],  Coefficient[mos, t, r]}, {r, minr,
        maxr}]
    ]
\end{verbatim}

You are now ready to compute the universal complex! Let us compute an example. Here is the result you will see on your screen for the knot $6_3$ :
\begin{verbatim}
KhN[Knot[6, 3]]
\end{verbatim}
\noindent\{\{-3, Arc[-6, 1], H Curtain[-6, 1, -4, 1] + H Curtain[-6, 1, -4, 2]\},\\
\{-2, Arc[-4, 1] + Arc[-4, 2], -H Curtain[-4, 1, -2, 1] + H Curtain[-4, 2, -2, 1]\},\\ \{-1, Arc[-2, 1] + Arc[-2, 2], H Curtain[-2, 2, 0, 2]\},\\
\{0, Arc[0, 1] + Arc[0, 2] + Arc[0, 3], H Curtain[0, 3, 2, 1]\},\\
\{1, Arc[2, 1] + Arc[2, 2], -H Curtain[2, 2, 4, 2]\},\\
\{2, Arc[4, 1] + Arc[4, 2], H Curtain[4, 1, 6, 1]\},
\{3, Arc[6, 1], 0\}\}

We need to construct the complex from this output. This is pretty straight forward. Each homological degree is nested in $\{~\}$. The first number is the homological degree. Then the special lines are collected (denoted ``Arc[ ]''). Each special line comes with an internal degree (the first number) and a serial number (the second number). Then the maps are collected. These are denoted by ``Curtain[ ]''. They might be accompanied by a monomial in $H$, a sign, or a coefficient. Each curtain specifies the source arc and the target arc numbers. Thus the complex can be constructed precisely.

Note that the JavaKh software does not reduce the complex completely, and thus there are still some manual complex reductions that can be done. For example, the sub-complex from the above example:
\\
\noindent\{-3, Arc[-6, 1], H Curtain[-6, 1, -4, 1] + H Curtain[-6, 1, -4, 2]\},\\
\{-2, Arc[-4, 1] + Arc[-4, 2], -H Curtain[-4, 1, -2, 1] + H Curtain[-4, 2, -2, 1]\},\\ \{-1, Arc[-2, 1]\}
\\
is actually reducible to:\\
\noindent\{-3, Arc[-6, 1], H Curtain[-6, 1, -4, 1]\},\\
\{-2, Arc[-4, 1] + Arc[-4, 2], H Curtain[-4, 2, -2, 1]\},\\
 \{-1, Arc[-2, 1]\}
\\

After these manual reductions are done, the complex can be finally presented in the form we used in the thesis :
\[
  \xymatrix@C=1cm{
    \begin{bmatrix} -6  \end{bmatrix}_{-3}
    \ar[r]^{\begin{pmatrix}    H\\0    \end{pmatrix}}
    &
    {\begin{bmatrix}     -4\\-4    \end{bmatrix}}_{-2}
    \ar[r]^{\begin{pmatrix}     0&H\\0&0    \end{pmatrix}}
    &
    {\begin{bmatrix}    -2\\-2    \end{bmatrix}}_{-1}
    \ar[r]^{\begin{pmatrix}    0&0\\0&H\\0&0    \end{pmatrix}}
    &
    {\begin{bmatrix}    0\\0\\0    \end{bmatrix}}_{0}
    \ar[r]^{\begin{pmatrix}   \textit{0}&H \\ \textbf{0}&\hat{0}    \end{pmatrix}}
    &
    {\begin{bmatrix}    2\\2    \end{bmatrix}}_{1}
     \ar[r]^{\begin{pmatrix}    0&0\\0&H    \end{pmatrix}}
    &
    {\begin{bmatrix}    4\\4    \end{bmatrix}}_{2}
     \ar[r]^{\begin{pmatrix}    H&0    \end{pmatrix}}
    &
    \begin{bmatrix}    6    \end{bmatrix}_{3}
  }
\]
($\textbf{0},\textit{0},\hat{0}$ represent $2\times2,1\times2,2\times1$ zero matrices)

% ------------------------------------------------------------------------

\bibliographystyle{alpha}
\bibliography{Thesis}

\begin{thebibliography}{Kho03b}

\bibitem[Abr96]{ab}
L~Abrams.
\newblock Two dimensional topological quantum field theories and {F}robenius
  algebras.
\newblock {\em J. Knot Theory Ramifications}, 5:569--587, 1996.

\bibitem[AP04]{asaeda1}
M~M Asaeda and J~H Przytycki.
\newblock {Khovanov homology: torsion and thickness, arXiv:math.GT/0402402}.
\newblock 2004.

\bibitem[BN02]{ba5}
D~Bar-Natan.
\newblock {On Khovanov's categorification of the Jones polynomial}.
\newblock {\em algebraic and Geometric Topology}, 2:337--370, 2002.

\bibitem[BN05a]{ba1}
D~Bar-Natan.
\newblock Khovanov's homology for tangles and cobordisms.
\newblock {\em Geometry and Topology}, 9:1443--1499, 2005.

\bibitem[BN05b]{ba4}
D~Bar-Natan.
\newblock {I've computed Kh(T(9,5)) and I am happy: a talk given at George
  Washington University, http://www.math.toronto.edu/~drorbn/Talks/}.
\newblock Feb. 2005.

\bibitem[BN05c]{ba2}
D~Bar-Natan.
\newblock {4TU-2: A talk given at UQAM, Montreal,
  http://www.math.toronto.edu/~drorbn/Talks}.
\newblock Oct. 2005.

\bibitem[BN06]{ba3}
D~Bar-Natan.
\newblock {Fast Khovanov homology computations, arXiv:math.GT/0606318}.
\newblock 2006.

\bibitem[BNM05]{ba6}
D~Bar-Natan and S~Morrison.
\newblock {The Karoubi envelope and Lee's degeneration of Khovanov homology,
  arXiv:math.GT/0606542}.
\newblock 2005.

\bibitem[Cho06]{cho}
T~Y Chow.
\newblock You could have invented spectral sequences.
\newblock {\em Notices of the AMS}, 53:15--19, January 2006.

\bibitem[Gre]{green}
J~Green.
\newblock {JavaKh software, included in the knottheory software package,
  available at the Knot-Atlas, www.katlas.math.toronto.edu}.

\bibitem[Kho]{kho5}
M~Khovanov.
\newblock Private communications.

\bibitem[Kho00]{kho2}
M~Khovanov.
\newblock {A categorification of the Jones polynomial}.
\newblock {\em Duke Math J.}, 101:359--426, 2000.

\bibitem[Kho03a]{kho3}
M~Khovanov.
\newblock {Patterns in knot cohomology I}.
\newblock {\em Experiment. Math}, 12:365--374, 2003.

\bibitem[Kho03b]{kho4}
M~Khovanov.
\newblock {sl(3) link homology , arXiv:math.QA/0304375}.
\newblock 2003.

\bibitem[Kho06]{kho1}
M~Khovanov.
\newblock {Link homology and Frobenius extensions}.
\newblock {\em Fund. Math}, 190:176--190, 2006.

\bibitem[Lee02]{lee1}
E~S Lee.
\newblock {On Khovanov invariant for alternating links, arXiv:math.GT/0210213}.
\newblock 2002.

\bibitem[LP06]{lau1}
A~D Lauda and H~Pfeiffer.
\newblock {Open-closed TQFTs exten Khovanov homology from links to tangles,
  arXiv:math.GT/0606331}.
\newblock 2006.

\bibitem[Man06]{man1}
V~O Manturov.
\newblock {Khovanov homology for virtual knots with arbitrary coefficients,
  arXiv:math.GT/0601152}.
\newblock 2006.

\bibitem[MN06]{sco1}
S~Morrison and A~Nieh.
\newblock {On Khovanov's cobordism theory for SU(3) knot homology,
  arXiv:math.GT/0612754}.
\newblock 2006.

\bibitem[MTV05]{turner1}
M~Mackaay, P~Turner, and P~Vaz.
\newblock {A remark on Rasmussen's Invariant of knots, arXiv:math.GT/0509692}.
\newblock 2005.

\bibitem[MV06]{mac}
M~Mackaay and P~Vaz.
\newblock {The universal sl(3) link homology, arXiv:math.GT/0603307}.
\newblock 2006.

\bibitem[Nao05]{naot2}
G~Naot.
\newblock {On Chern-Simons Theory with an Inhomogeneous Gauge Group and BF
  Theory Knot Invariants}.
\newblock {\em Journal of Mathematical Physics}, 46:122302, 2005.

\bibitem[Nao06]{naot1}
G~Naot.
\newblock The universal {K}hovanov link homology theory.
\newblock {\em Algebraic and Geometric Topology}, 6:1863--1892, 2006.

\bibitem[Ras04]{ras1}
J~Rasmussen.
\newblock {Khovanov homology and the slice genus, arXiv:math.GT/0402131}.
\newblock 2004.

\bibitem[Shu]{shuma1}
A~Shumakovitch.
\newblock Private communications.

\bibitem[Shu04a]{shuma2}
A~Shumakovitch.
\newblock {Rasmussen invariant, Slice-Bennequin inequality and sliceness of
  knots, arXiv:math.GT/0411643}.
\newblock 2004.

\bibitem[Shu04b]{shuma3}
A~Shumakovitch.
\newblock {Torsion of the Khovanov homology, arXiv:math.GT/0405474}.
\newblock 2004.

\bibitem[Sto05]{stosic1}
M~Stosic.
\newblock {Homological thickness and stability of torus knots,
  arXiv:math.GT/0511532}.
\newblock 2005.

\bibitem[Sto06]{stosic2}
M~Stosic.
\newblock {Homology of torus links, arXiv:math.QA/0606656}.
\newblock 2006.

\bibitem[TT05]{tur1}
V~Turaev and P~Turner.
\newblock {Unoriented topological quantum field theory and link homology,
  arXiv:math.GT/0506229}.
\newblock 2005.

\bibitem[Tur]{turner4}
P~Turner.
\newblock Private communications.

\bibitem[Tur04]{turner2}
P~Turner.
\newblock {Calculating Bar-Natan's characteristic two Khovanov homology,
  arXiv:math.GT/0411225}.
\newblock 2004.

\bibitem[Tur06]{turner3}
P~Turner.
\newblock {A spectral sequence for Khovanov homology with an application to
  (3,q)-torus knots , arXiv:math.GT/0606369}.
\newblock 2006.

\bibitem[Vir02]{viro1}
O~Viro.
\newblock {Remarks on definition of Khovanov homology, arXiv:math.GT/0202199}.
\newblock 2002.

\end{thebibliography}

\end{document}